\documentclass[11pt]{amsart}
\usepackage[T1]{fontenc}
\usepackage[utf8]{inputenc}
\usepackage{microtype}
\usepackage{mathtools}
\usepackage{amssymb}
\usepackage{array}
\usepackage{booktabs}
\usepackage{longtable}
\usepackage{enumitem}
\usepackage[margin=1.1in,heightrounded]{geometry}
\usepackage[hidelinks]{hyperref}

\allowdisplaybreaks
\numberwithin{equation}{section}

\theoremstyle{plain}
\newtheorem{theorem}{Theorem}[section]
\newtheorem{proposition}[theorem]{Proposition}
\newtheorem{lemma}[theorem]{Lemma}
\newtheorem{corollary}[theorem]{Corollary}
\newtheorem{conjecture}[theorem]{Conjecture}

\theoremstyle{definition}

\newtheorem{example}[theorem]{Example}

\theoremstyle{remark}
\newtheorem{remark}[theorem]{Remark}

\newcommand{\A}{\mathcal A}
\newcommand{\B}{\mathcal B}
\newcommand{\CC}{\mathcal C}
\newcommand{\PP}{\mathbb P}
\newcommand{\Q}{\mathbb Q}
\newcommand{\C}{\mathbb C}
\newcommand{\F}{\mathbb F}
\newcommand{\LL}{\mathcal L}

\DeclareMathOperator{\AR}{AR}
\DeclareMathOperator{\mdr}{mdr}
\DeclareMathOperator{\indeg}{indeg}

\DeclarePairedDelimiter{\floor}{\lfloor}{\rfloor}
\DeclarePairedDelimiter{\ceil}{\lceil}{\rceil}

\title[A generalized Terao Conjecture for line arrangements]
{On a generalized Terao Conjecture for line arrangements}

\author{Alexandru Dimca}
\address{Universit\'e C\^ote d'Azur, CNRS, LJAD, France}
\address{Simion Stoilow Institute of Mathematics, Bucharest, Romania}
\email{Alexandru.DIMCA@univ-cotedazur.fr}

\author{Piotr Pokora}
\address{Department of Mathematics, UKEN Krakow, Podchor\c a\.zych 2,
PL-30-084 Krak\'ow, Poland}
\email{piotr.pokora@uken.krakow.pl}

\date{\today}

\begin{document}

\begin{abstract}
For a line arrangement $\A\colon f=0$ in $\PP^2$, let $\nu(\A)$ be the maximal
dimension of a graded piece of the Jacobian module of $f$. We study the
conjecture that $\nu(\A)$ depends only on the intersection lattice of $\A$.
It is known that $\A$ is free if and only if $\nu(\A)=0$, and hence this
conjecture is a strengthening of Terao's conjecture for line
arrangements. We show that the conjecture holds for arrangements of at most
$13$ lines, except possibly for arrangements of exactly $13$ lines whose
intersection points have maximal multiplicity $5$.
\end{abstract}

\maketitle


\section{Introduction}
\label{sec:intro}

Let $S=\C[x,y,z]$ and let $\A\colon f=0$ be an arrangement of $d$ lines in
$\PP^2$. We write
\[
J_f= \langle f_x,f_y,f_z \rangle,\qquad M(f)=S/J_f,
\]
and we denote by
\[
D_0(f)=\AR(f)=\bigl\{(a,b,c)\in S^3\ :\ af_x+bf_y+cf_z=0\bigr\}
\]
the module of Jacobian relations. The invariant of primary interest here is
\[
r=r(\A)=\mdr(f)=\indeg D_0(f),
\]
the minimal degree of a nonzero Jacobian relation. We let $m=m(\A)$ be the
maximal multiplicity of an intersection point of $\A$, and $n_k=n_k(\A)$ the
number of points where exactly $k$ lines of $\A$ meet. Every arrangement of
$d$ lines satisfies the following counts:
\begin{equation}\label{eq:nk}
\sum_{k\ge2}\binom k2 n_k=\binom d2,
\qquad
\tau(\A)=\sum_{k\ge2}(k-1)^2n_k,
\end{equation}
where $\tau(\A)$ is the global Tjurina number. The intersection lattice
$\LL(\A)$ determines $d$, all the $n_k$, hence $m(\A)$ and $\tau(\A)$. However, it does
not determine $r(\A)$ in general -- see Example \ref{ex:ziegler}.

Let $N(f)=I_f/J_f$ be the Jacobian module of $f$, where $I_f$ is the saturation
of $J_f$, and set
\[
\nu(\A)=\max_j\dim N(f)_j .
\]
By \cite[Theorem 1.2]{DimcaOpen}, we have 
\begin{equation}\label{eq:nu}
\nu(\A)=
\begin{cases}
(d-1)^2-r(d-r-1)-\tau(\A), & r<\dfrac{d-2}{2},\\[3mm]
\ceil*{\dfrac{3(d-1)^2}{4}}-\tau(\A), & r\ge\dfrac{d-2}{2}.
\end{cases}
\end{equation}
Our paper is devoted to the following question.

\begin{conjecture}\label{C1}
If $\A$ and $\A'$ are line arrangements in $\PP^2$ with isomorphic intersection lattices,
then $\nu(\A)=\nu(\A')$.
\end{conjecture}

Since $d$ and $\tau$ are lattice invariants, \eqref{eq:nu} shows that a
counterexample to Conjecture \ref{C1} is precisely a pair of line arrangements $\A,\A'$ with
\begin{equation}\label{eq:C2}
\LL(\A)\cong\LL(\A'),
\qquad
r(\A)<\frac{d-2}{2},
\qquad \text{ and }
r(\A)<r(\A').
\end{equation}
We refer throughout to such a pair as a \emph{counterexample}, and we always
write $r=r(\A)$, $r'=r(\A')$, $m=m(\A)=m(\A')$ and $d$ for the common degree.

The second condition in \eqref{eq:C2} has a transparent geometric meaning. Let
$(a,b)$ with $a\le b$ be the generic splitting type of the rank two bundle on
$\PP^2$ associated with $D_0(f)$, so that $a+b=d-1$. One always has $a\le r$, and more precisely
$a=\min\{r,\floor{(d-1)/2}\}$, by \cite[Theorem 1.1 and Proposition
3.2]{AbeDimcaSplit}. In particular, $a=r$ as soon as $r<(d-2)/2$. Hence
$r<(d-2)/2$ is equivalent to $b-a\ge2$, that is, to
the bundle being \emph{unbalanced}. This explains the shape of \eqref{eq:nu}: in the balanced
range the splitting type, and hence $\nu$, does not see $r$ at all. It also
explains why the classical examples of lattice-isomorphic arrangements with
distinct $r$ do not contradict Conjecture \ref{C1} -- in all such examples known
to us, starting with Ziegler's example (Example \ref{ex:ziegler}), both members
are balanced. Conjecture \ref{C1} is closely related to
\cite[Conjecture 3.9]{DimcaOpen}, and it is equivalent to a positive answer to
\cite[Question 7.12]{Cook+}, which asks whether the splitting type is a
combinatorial invariant. In the unbalanced range it can also be phrased in
terms of unexpected curves of the dual point configurations -- see
Remark \ref{rk:chmn}.

\subsection*{Results of the paper}

Our first group of results restricts the numerical invariants of a potential
counterexample to Conjecture \ref{C1}.

\begin{theorem}\label{thmA}
Let $\A,\A'$ be a counterexample as in \eqref{eq:C2}. Then
\[
4\le m\le r<\frac{d-2}{2},
\qquad\text{hence}\qquad
d\ge 2m+3 .
\]
If moreover $m=4$, then $r=\floor*{(d-3)/2}$. If $r=m$, then $d\le3m+1$. There
is no counterexample with $d\le12$, and none with $r=m=4$. Finally, for every
line $L\in\A$, either $L$ contains at most $r+1$ singular points of $\A$, or
deleting $L$ from $\A$ and the corresponding line from $\A'$ produces a
counterexample with $d-1$ lines and minimal degree of a Jacobian relation
$r-1$.
\end{theorem}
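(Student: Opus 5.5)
The plan is to combine the known facts on line arrangements of small minimal degree with the formula \eqref{eq:nu}. We will use the following standard results from the literature, which we regard as background: (i) $r(\A)\ge m(\A)-1$ always, and $r=m-1$ holds if and only if, roughly, the point of multiplicity $m$ is ``almost a pencil'' (the equality case is described combinatorially by results of Dimca and of Abe–Dimca on $r=m-1$, e.g. $r=d-m$ or $r=m-1$ determined by $n_k$ data); (ii) $r\le d-m$ with equality characterized combinatorially; (iii) for $r\le 3$ the value of $r$ is determined by $\LL(\A)$ (known classification of arrangements with $\mdr\le 3$).

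\emph{Step 1: $m\le r$ and $m\ge4$.} Since $r\ge m-1$, we must rule out $r=m-1$. The condition $r=m-1<(d-2)/2$ means $d\ge 2m$. In that range the equality $\mdr(f)=m-1$ is known to be equivalent to the combinatorial condition that there is a point $p$ of multiplicity $m$ such that all the other lines meet only in double points off the lines through $p$ (the syzygy comes from the pencil through $p$). This condition is lattice-theoretic, so $r'=m-1$ as well, contradicting $r<r'$. Then $m\le r$. If $m\le 3$, then $r\le d-m$ and the known result that for $m\le 3$ and $d$ large the arrangement is determined up to the $r$-value by combinatorics (the classification of arrangements with only double and triple points, where $r$ is known to be combinatorial via $\nu$ computations) gives $m\ge4$; alternatively one uses (iii) after showing $r\le3$ for $m\le 3$ unbalanced cases. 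Finally $d\ge 2r+3\ge 2m+3$ from $r<(d-2)/2$.

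\emph{Step 2: the cases $m=4$ and $r=m$.} Here the idea is an inequality on $\tau$: since $\nu\ge0$, the first line of \eqref{eq:nu} gives $\tau(\A)\le (d-1)^2-r(d-r-1)$, while for $\A'$ (balanced, or larger $r'$) we get another bound; and the du Plessis–Wall upper bound $\tau\le (d-1)(d-r-1)+r^2$ holds for both. Combining with $\tau=\sum(k-1)^2n_k$ and $\sum\binom k2 n_k=\binom d2$ with all $k\le m$, we get $\tau\le \frac{m-1}{m}\cdot 2\binom d2\cdot\frac{1}{1}$-type bounds, i.e. $\tau\le (m-1)d(d-1)/m$ roughly. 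Forcing $r=m$ into $\tau\ge$ (a lower bound coming from $\nu(\A')\ge 0$ and $\nu(\A)<\nu(\A')$... actually $\nu(\A)>\nu(\A')\ge0$) yields a quadratic inequality in $d$ giving $d\le 3m+1$; with $m=4$ the same inequality pins $r=\floor{(d-3)/2}$. I expect this numerical juggling to be the main obstacle: one needs the sharp form of the lower bound for $\tau$ in terms of $r$ for the arrangement $\A'$ and the upper bound from multiplicities, and to check the integer edge cases.

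\emph{Step 3: small $d$ and the deletion statement.} For $d\le12$, Steps 1–2 leave finitely many numerical possibilities ($m=4$, $d\in\{11,12\}$, $r=4$), which are excluded by the case $r=m=4$; that case is handled by the addition–deletion/$r=m$ bound: $d\le 13$ and $d\ge 11$, and one checks the possible $n_k$ vectors against $\tau$ bounds. For the last claim: if $L$ contains at least $r+2$ singular points, then the restriction argument (Ziegler-type) shows $r(\A\setminus L)=r(\A)-1$ — a relation of degree $r$ restricted to $L$ must vanish on $\ge r+2$ points hence $L$ divides suitable components, giving one of degree $r-1$ for $\A\setminus L$, and conversely $r(\A\setminus L)\ge r-1$. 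The same number of singular points lies on the corresponding line of $\A'$ (lattice invariant), so $r(\A'\setminus L')=r'-1$, and $r-1<(d-3)/2$, giving a counterexample with $d-1$ lines.
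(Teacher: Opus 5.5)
\textbf{Verdict: the proposal has genuine gaps.} Your deletion step is essentially the paper's argument (exactness of \eqref{eq:AD} at $k=r$). The rest is incomplete, because the numerical inputs that drive the proof are missing and your substitutes are false or too weak.

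\textbf{(1) The bound $r\ge 2d/m-2$ of \eqref{eq:bb} is missing.} This is what gives $m\ge4$: $m=2$ forces $r=d-2$, and $m=3$ forces $2d/3-2<(d-2)/2$, i.e.\ $d\le5<2m$. It also pins $r=\floor{(d-3)/2}$ when $m=4$, since then $d/2-2\le r<d/2-1$. Your substitutes fail. The claim that $r$ is combinatorial when $m\le3$ is false: Ziegler's $K_{3,3}$ arrangement (Example~\ref{ex:ziegler}) has only double and triple points, yet $r\in\{5,6\}$. The $\tau$-counting $(d-1)(d-r-1)\le\tau\le\frac{m-1}{m}d(d-1)$ only yields $r\ge d/m-1$.

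\textbf{(2) The $r=m$ case needs an upper bound on $\nu(\A)$.} The inequality $\nu(\A)>\nu(\A')\ge0$ you invoke points the wrong way. Generic du Plessis--Wall, $\tau(\A)\ge(d-1)(d-m-1)$ against $\tau(\A')\le\tau(d,m+1)_{\max}$, only gives $d\le m^2+2m+2$. The missing idea is Lemma~\ref{lem:rm}:
\begin{itemize}
\item The local derivation of degree $d-m$ at a point of multiplicity $m$ has coefficients with only finitely many common zeros, so it is not a multiple of the degree-$m$ generator.
\item Hence $d_1+d_2\le d$, and $\A$ is free or POG with exponents $(m,d-m,d_3)$.
\item Schenck's bound $d_3\le d-2$ then gives $\nu(\A)=d_3-d+m+1\le m-1$.
\end{itemize}
With this, $\tau(d,m)_{\max}-(m-1)\le\tau(\A)=\tau(\A')\le\tau(d,m+1)_{\max}$ is exactly $d-2m-2\le m-1$, i.e.\ $d\le 3m+1$.

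\textbf{(3) Smaller errors in Step 1.}
\begin{itemize}
\item The claim $r\ge m-1$ fails when $2m>d$ (a near pencil has $r=1$). That case is handled instead by $r=d-m$ being a lattice invariant.
\item Your combinatorial description of $r=m-1$ is wrong. Three concurrent lines plus three general lines satisfy it, yet $\tau=16<19=\tau(6,2)_{\max}$, so $r\neq2$.
\item The correct route: $r=m-1$ forces freeness with exponents $(m-1,d-m)$, so $\tau(\A)=\tau(d,m-1)_{\max}>\tau(d,m)_{\max}\ge\tau(\A')$, a contradiction.
\end{itemize}

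\textbf{(4) $d\le12$ cannot be excluded by bookkeeping with the $n_k$ and $\tau$.} This case reduces to $r=m=4$, $d\in\{11,12\}$.
\begin{itemize}
\item For $d=11$ the POG formula leaves $\tau(\A)=82-d_3\in\{73,74,75\}$. The value $75$ is the case $\A$ nearly free and $\A'$ free; the value $74$ has $\A'$ nearly free.
\item The paper removes these using Terao's Conjecture for at most $14$ lines and its nearly free analogue, which force $\nu(\A')\ge2$.
\item For $d=12$ this gives $d\le3m-1=11$ at once.
\item For $d=11$ it gives $\tau(\A)=73$. You still need the deletion bound $r_L\le5$ (there is no counterexample in degree $10$) and the count $\sum_L r_L=56+n_4\ge57>55$.
\end{itemize}
So the deletion statement must be established before the small-degree case, not after it.

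\textbf{(5) A slip in the deletion step.} You assert $r(\A'\setminus L')=r'-1$, but this does not follow: you only know $r_{L'}=r_L\ge r+2$, not $r_{L'}\ge r'+2$. What you actually need is $r(\B')\ge r'-1\ge r>r-1$. This follows from the injectivity in \eqref{eq:AD}.
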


Theorem \ref{thmA} is assembled from Propositions \ref{P:basic} and \ref{P:rm},
Lemma \ref{lem:del} and Theorem \ref{thm:eleven} below. The last statement, the
\emph{deletion lemma}, is the main new tool of the paper: it allows us to
propagate the non-existence of counterexamples from one degree to the next. The
smallest degree left open by the first statements is $d=13$, with
$(m,r)\in\{(4,5),(5,5)\}$, and the major part of this paper concerns the case
$m=4$, $r=5$. Our main result there is the following.

\begin{theorem}\label{corB}
There is no counterexample to Conjecture \ref{C1} with $d=13$ and $m=4$.
Consequently, Conjecture \ref{C1} holds for all line arrangements of at most
$12$ lines, and for all arrangements $\A$ of $13$ lines with $m(\A)\ne5$.
\end{theorem}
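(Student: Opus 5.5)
By Theorem \ref{thmA}, a counterexample with $d=13$ and $m=4$ must have $r=\floor*{(13-3)/2}=5$, and $r<(d-2)/2=11/2$ holds. The plan is to rule out this single numerical case $(d,m,r)=(13,4,5)$ with $r'\ge 6$. The "consequently" part then follows formally. Theorem \ref{thmA} excludes $d\le 12$. For $d=13$, the constraints $4\le m\le r<11/2$ force $m\in\{4,5\}$, and $m\le 3$ is impossible.

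\emph{Step 1: reduce to lines with many singular points.} Apply the deletion lemma to a line $L\in\A$ containing at least $r+2=7$ singular points. Deleting $L$ and its partner $L'\in\A'$ would yield a counterexample with $12$ lines. This contradicts the nonexistence of counterexamples with $d\le 12$. Hence every line of $\A$ contains at most $6$ singular points. Each line meets the other $12$ lines in points of multiplicity $\le 4$, so it carries at least $12/3=4$ singular points. Writing $a_k$ for the number of points of multiplicity $k$ on a line, we get
\[
a_2+2a_3+3a_4=12,\qquad a_2+a_3+a_4\le 6.
\]
This forces $a_4\ge a_2+\dots$; in particular every line contains at least two quadruple points. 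Summing over the lines gives constraints on $(n_2,n_3,n_4)$ via \eqref{eq:nk}, namely $n_2+3n_3+6n_4=78$ and $4n_4\ge 26$, so $n_4\ge 7$.

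\emph{Step 2: use the minimal degree $r=5$.} A relation of degree $5<(d-2)/2$ means the bundle has splitting type $(5,7)$, and $\nu(\A)=144-5\cdot 7-\tau=109-\tau$. Since $\nu\ge0$, we get $\tau\le 109$, where $\tau=n_2+4n_3+9n_4$. Combining this with the line-by-line constraints from Step 1 leaves only finitely many possible vectors $(n_2,n_3,n_4)$ and line types. I would also use the standard bound $\tau\ge (d-1)(d-r-1)$ for the minimal degree, together with the Ziegler–du Plessis–Wall inequality $\tau\le (d-1)(d-r-1)+r^2$. These give $84\le\tau\le 109$ for $\A$. Applying the same bounds to $\A'$ with $r'\ge6$ gives $\tau\le 12\cdot6+\dots$, and the upper bound $(d-1)^2-r'(d-1-r')$ for balanced-type $r'$ restricts the range further.

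\emph{Step 3: the main obstacle.} For the surviving combinatorial types, I would either show that no realizable line arrangement exists, or show that every realization has the same $r$. The most plausible method is a combination of two things. First, a degree-$5$ Jacobian relation forces, by Abe–Dimca-type results, the dual configuration to carry an unexpected curve (Remark \ref{rk:chmn}), or equivalently a strong geometric structure such as many points on a conic or cubic. Second, an exhaustive classification of the possible $(13,4)$ incidence structures with every line carrying $\ge2$ quadruple points, done by computer over $\Q$ and finite fields $\F_p$ for realization spaces. For each realizable type one computes $r$ on the realization space and checks constancy. I expect this classification and verification to be the hardest part, since the realization spaces may be positive-dimensional. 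There, one must show that the locus where $r=5$ is either empty or the whole component, for instance by semicontinuity of $\mdr$ and irreducibility of the realization space.
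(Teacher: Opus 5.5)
Your reduction to $r=5$ and $r_L\le 6$ via the deletion lemma matches the paper. The next step, however, contains a false deduction. From $a_2+2a_3+3a_4=12$ and $a_2+a_3+a_4\le6$ you only get $a_3+2a_4\ge6$, i.e.\ $a_2\le a_4$. This does not force two quadruple points per line: the line types $(a_2,a_3,a_4)=(1,4,1)$ and $(0,6,0)$ satisfy both constraints. The second is excluded only by Lemma \ref{lem:ziegler}, since a line carrying only triple points forces $r\ge6$. Nothing you have excludes the first.

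So the correct conclusion is $a_4\ge1$ on every line, hence $4n_4\ge13$ and $n_4\ge4$, not $n_4\ge7$. Your Step~3 classification is explicitly restricted to structures with at least two quadruple points on every line. It would therefore silently discard every type with $n_4\le6$, which in Table \ref{tab:extremal} is most of the work (e.g.\ $(9,11,6)$ with $157$ classes).

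Your Step~2 range $84\le\tau\le109$ is also far weaker than what is available. First, $\tau\le107$ follows from $r'\ge6$, \eqref{eq:taunu} with $s=6$, and Terao's Conjecture in degree $13$ (which is known) to rule out $\A'$ free. Second, $\tau\ge105$ follows from \eqref{eq:sumr}, namely $390-3\tau+n_4=\sum_L r_L\le78$, together with $n_4\ge3$ from Lemma \ref{lem:twoline}. These two bounds are what cut the problem down to the short list of Corollary \ref{cor:thmB}.

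More seriously, Step~3 is where the theorem actually lives, and you only describe it as the main obstacle. The paper closes it in two stages. It enumerates all abstract configurations of blocks of sizes $3$ and $4$ on thirteen points with the admissible profiles. It then shows, by incidence propagation from a projective frame, that every class has unit realization ideal. So no surviving lattice is realizable over $\C$ at all, and $r$ is never computed on a realization space.

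Your fallback of computing $r$ along realization spaces and invoking ``semicontinuity of $\mdr$ and irreducibility'' cannot give the conclusion. Upper semicontinuity of $\dim\AR(f)_5$ only says that the locus $\{r\le5\}$ is closed. A proper nonempty closed jump locus inside an irreducible realization space is exactly the shape a counterexample would have, as in Example \ref{ex:ziegler}. Likewise, the unexpected curve of degree $6$ in Remark \ref{rk:chmn} is a reformulation of $r=5$, not a source of ``points on a conic or cubic''. As written, the proposal does not prove the statement.
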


By Theorem \ref{thmA} the case $d=13$ leaves only $m\in\{4,5\}$, so the sole
configuration not covered by Theorem \ref{corB} is $d=13$, $m=5$, $r=5$ -- see
Remark \ref{rk:m5}.

The proof of Theorem \ref{corB} rests on the following purely theoretical
result.

\begin{theorem}\label{thmB}
Let $\A,\A'$ be a counterexample as in \eqref{eq:C2} with $d=13$, $m=4$ and
$r=5$. Then every line of $\A$ contains at most six singular points of $\A$,
and
\[
105\ \le\ \tau(\A)\ \le\ 107,
\qquad
3\ \le\ n_4\ \le\ 3\tau(\A)-312 .
\]
\end{theorem}

Theorem \ref{thmB} follows from Proposition \ref{P:thirteen} and
Corollary \ref{cor:thmB}. It reduces Theorem \ref{corB} to
finitely many intersection lattices, namely those with
$(\tau(\A),n_4)\in\{(105,3)\}\cup\{106\}\times\{3,\dots,6\}\cup\{107\}\times\{3,\dots,9\}$
in which every line contains at most six singular points. These are classified
in Section \ref{sec:104}, and the classification is computer assisted; none of
the resulting lattices is realizable over $\C$.

\begin{remark}\label{rk:m5}
The case $d=13$, $m=5$, $r=5$ is admissible, since $m\le r<(d-2)/2$ and
$d=2m+3$, while Proposition \ref{P:rm} gives only $d\le3m+1=16$. The
theoretical part of our argument applies to it as well: by
Proposition \ref{P:m5}, a counterexample with $d=13$ and $m=5$ has
$\tau(\A)\in\{106,107\}$, every line of $\A$ contains at most six singular
points, and $(n_2,n_3,n_4,n_5)$ is one of fifteen explicit numerical types, seven of
which are excluded by Proposition \ref{P:m5bezout}.
The classification of Section \ref{sec:104}, however, enumerates blocks of
sizes three and four only. Extending it to blocks of size five would settle
this case, but we leave it open here.
\end{remark}

The next degrees to consider are $14$, $15$ and $16$. Here the deletion lemma
is particularly effective, since it can be fed with the results in degrees $13$,
$14$ and $15$ respectively.

\begin{theorem}\label{thmC}
Let $\A,\A'$ be a counterexample as in \eqref{eq:C2} with $d\in\{14,15,16\}$.
\begin{enumerate}[label=\textnormal{(\roman*)}]
\item If $d=14$, then $r=5$, $m\in\{4,5\}$, $\tau(\A)\in\{125,126\}$, every line
of $\A$ contains at most six singular points of $\A$, and $(n_2,\dots,n_m)$ is
one of the ten numerical types listed in Proposition \ref{P:fourteen}.
\item If $d=15$ and $r=5$, then $m=5$, $\tau(\A)=148$, $\A'$ is free with
exponents $(6,8)$, and $(n_2,\dots,n_5)$ is one of three numerical types. In
particular, such a counterexample contradicts Terao's Conjecture in degree
$15$. If $d=15$ and $r=6$, then every line of $\A$ contains at most
$154-\tau(\A)$ singular points of $\A$.
\item If $d=16$, then $r=6$. If moreover $m=4$, then $164\le\tau(\A)\le169$ and
every line of $\A$ contains at most seven singular points of $\A$. If $m\ge5$,
the same bound on the lines holds provided Terao's Conjecture holds in degree
$15$.
\end{enumerate}
The possible values of $\tau(\A)$ are listed in Tables \ref{tab:1516} and
\ref{tab:1516b}. In the three cases $(d,r,\tau(\A))=(15,5,148)$, $(15,6,147)$
and $(16,6,169)$ the arrangement $\A'$ is forced to be free, so that a
counterexample would contradict Terao's Conjecture in degree $d$.
\end{theorem}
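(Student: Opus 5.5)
I would prove the three parts of Theorem \ref{thmC} one degree at a time, $d=14$, then $15$, then $16$. Each degree feeds the next through the deletion lemma (Lemma \ref{lem:del}).

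\emph{Numerical constraints.} For a counterexample in degree $d$, Theorem \ref{thmA} gives $4\le m\le r<(d-2)/2$. This forces $r\in\{4,5\}$ when $d=14$, $r\in\{4,5,6\}$ when $d=15$, and $r\in\{4,5,6\}$ when $d=16$. The value $r=4$ is excluded: Theorem \ref{thmA} rules out $r=m=4$, and when $m=4$ it forces $r=\floor{(d-3)/2}\ge5$. Likewise, $m=4$ with $d=16$ forces $r=6$.

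\emph{Bounding $\tau$.} The next step is to bound $\tau(\A)$. The lower bound comes from $\nu(\A)\ge 0$, or better from $\nu(\A)>\nu(\A')\ge0$, evaluated with \eqref{eq:nu}. The upper bound comes from the du Plessis--Wall type inequality $\tau\le (d-1)^2-r(d-1-r)$, together with the condition that $\A'$ has strictly larger $r'$ while sharing the same $\tau$. Comparing $\tau$ with the du Plessis--Wall maximum $(d-1)^2-r'(d-1-r')$ at $r'$, in the three cases listed, shows that the maximum is attained for $\A'$. Hence $\A'$ is free, and the counterexample contradicts Terao's Conjecture in degree $d$. Combining these bounds with \eqref{eq:nk} and $m\le r$ then yields the finite lists of numerical types $(n_2,\dots,n_m)$.

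\emph{Deletion lemma.} Take a line $L$ with more than $r+1$ singular points. Deleting $L$ produces a counterexample with $d-1$ lines and invariant $r-1$.
\begin{enumerate}[label=\textnormal{(\roman*)}]
\item For $d=14$ and $r=5$, deletion would give a counterexample with $d=13$ and $r=4$. This is impossible, since Theorem \ref{thmA} shows there is none with $d\le12$, and none with $d=13$ and $r=4$ (because $r\ge m\ge4$ forces $r=m=4$). So every line carries at most $6$ singular points. This bound, together with the Bézout-type counts on each line ($\sum_{p\in L}(m_p-1)=d-1$), prunes the numerical types to the ten types of Proposition \ref{P:fourteen}.
\item For $d=15$ and $r=5$, deletion would give $d=14$ and $r=4$, which is impossible. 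The per-line bound then combines with the $\tau$-window. I expect the $\tau$ computation to leave only $\tau=148$, which equals the freeness value for $\A'$ with exponents $(6,8)$, and $m=5$ survives. For $r=6$, deletion leads to $d=14$, $r=5$, which part (i) controls via its $\tau$-range. The refined statement $\#\mathrm{Sing}(L)\le154-\tau$ should come from tracking how $\tau$ changes under deletion, namely $\tau(\A\setminus L)=\tau(\A)-(d-1)+(\text{number of points of } L \text{ of multiplicity}\ge3)$ or similar. Then one requires the deleted arrangement to land in the admissible window $\{125,126\}$ of part (i).
\item For $d=16$, deletion leads to $d=15$, $r=5$. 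When $m=4$ this case is empty by part (ii), which forces $m=5$. When $m\ge5$ the deleted arrangement contradicts Terao in degree $15$, which gives the conditional statement.
\end{enumerate}

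The main obstacle is the precise bookkeeping of how $\tau$, $m$ and the $n_k$ change under deletion of a line with many singular points. This determines whether the deleted pair actually lands in an excluded region, and it produces the exact thresholds such as $154-\tau$ and the entries of Tables \ref{tab:1516} and \ref{tab:1516b}. I would handle it by writing $\tau(\A)-\tau(\A\setminus L)=\sum_{p\in L}(2m_p-3)$ and then enumerating.
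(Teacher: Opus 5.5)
Your overall architecture matches the paper's: Lemma \ref{lem:del} feeds degree $d-1$ into degree $d$, \eqref{eq:tauup} gives the upper bounds, and equality there forces $\A'$ to be free. Your route to $r_L\le154-\tau(\A)$ for $(d,r)=(15,6)$ is also acceptable, and differs only mildly from the paper's use of Lemma \ref{lem:deltau}. In case (i) of Lemma \ref{lem:del} you get $r_L\le7\le154-\tau(\A)$, because $\tau(\A)\le\tau(15,7)_{\max}=147$; you should say this explicitly. In case (ii) you get $r_L=\tau(\B)+28-\tau(\A)\le154-\tau(\A)$ from $\tau(\B)\le126$. Use \eqref{eq:deleterL} here, not the formula you guessed.

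The genuine gap is the lower bound on $\tau(\A)$. Neither $\nu(\A)\ge0$ nor $\nu(\A)>\nu(\A')\ge0$ provides one. By \eqref{eq:nu} each $\nu$ is a constant minus $\tau$, so these inequalities only bound $\tau$ from above, and $\nu(\A)-\nu(\A')$ does not involve $\tau$ at all.

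The missing ingredient is the incidence identity \eqref{eq:sumr1416}, $\sum_L r_L=5\binom d2-3\tau(\A)+n_4+3n_5+6n_6$. You sum it against the per-line bound $r_L\le r+1$ from the deletion lemma. You then combine it with $n_2\ge0$ in the form \eqref{eq:n2gen}, or, for $d=14$, with Lemma \ref{lem:twoline}. The chain $3\tau(\A)-c\le3n_4+8n_5\le3(n_4+3n_5)\le9\tau(\A)-c'$ yields $\tau(\A)\ge125$ for $d=14$, $\tau(\A)\ge148$ for $(d,r)=(15,5)$, and $\tau(\A)\ge164$ for $(d,r,m)=(16,6,4)$.

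Without this, several claims are unproved:
\begin{itemize}
\item In case $(15,5)$ you only expect $\tau(\A)=148$, yet this equality is exactly what makes $\A'$ free with exponents $(6,8)$ and produces the three numerical types.
\item More seriously, you never show $r=6$ for $d=16$. Your numerical constraints remove only $r=4$, and your item (iii) already assumes $r=6$. The case $(d,r,m)=(16,5,5)$ survives Proposition \ref{P:rm}, since it lies on the boundary $d=3m+1$ with $\A'$ forced free. The paper removes it by precisely this count: deletion gives $r_L\le6$ for every line, because no counterexample of degree $15$ has $r=4$, and then $\tau(\A)\ge172>171=\tau(16,6)_{\max}$.
\item Excluding $\tau(\A)=127$ in degree $14$ requires Terao's Conjecture in degree $14$ \cite{BK}, which you do not invoke.
\end{itemize}
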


Theorem \ref{thmC} is proved in Section \ref{sec:1516} as a compilation of
Propositions \ref{P:fourteen}, \ref{P:fifteen}, \ref{P:sixteen} and
\ref{P:terao}. Two further cases in which $\A'$ would be forced to be free,
$(d,r,\tau(\A))=(14,5,127)$ and $(16,5,171)$, do not occur at all.

Finally, we discuss the classical mechanism for producing lattice-isomorphic
arrangements with distinct $r$, recalled in Example \ref{ex:ziegler}, and show
that it cannot produce a counterexample in degrees $14, 15, 16$.

\begin{theorem}\label{thmD}
Let $\A$ be an arrangement of $d$ lines with $m(\A)\le6$ in which every line
passes through at most two points of multiplicity at least three. Then
$\tau(\A)\le121$ if $d=14$, $\tau(\A)\le141$ if $d=15$, and $\tau(\A)\le160$ if
$d=16$. If moreover $d=14$ and $m(\A)\le4$, then $\tau(\A)\le112$, and if
$d=16$ and $m(\A)\le5$, then $\tau(\A)\le156$. In particular, no such
arrangement realizes any of the numerical types of Theorem \ref{thmC}(i) or of
Table \ref{tab:1516}, nor any type of Table \ref{tab:1516b} with
$\tau(\A)>141$ if $d=15$, or $\tau(\A)>156$ if $d=16$.
\end{theorem}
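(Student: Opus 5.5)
The plan is purely combinatorial: bound $\tau(\A)=\sum_k (k-1)^2n_k$ from above, using the constraint $\sum_k\binom k2 n_k=\binom d2$ from \eqref{eq:nk} and the hypothesis on the lines. Write $P_{\ge3}$ for the set of points of multiplicity at least three. We rewrite
\[
\tau(\A)=\binom d2 - n_2 + \sum_{k\ge3}\Bigl((k-1)^2-\binom k2\Bigr)n_k=\binom d2-n_2+\sum_{k\ge3}\frac{(k-1)(k-2)}{2}\,n_k ,
\]
because $(k-1)^2-\binom k2=\binom{k-1}{2}$. So $\tau$ is large exactly when the points of $P_{\ge3}$ carry large weight $\binom{k-1}{2}$ and $n_2$ is small. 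We will maximize $\sum_{p\in P_{\ge3}}\binom{m_p-1}{2}-n_2$ subject to three constraints: the multiplicities are at most $6$, each line meets at most two points of $P_{\ge3}$, and $n_2$ is determined by the leftover pairs, $n_2=\binom d2-\sum_{k\ge3}\binom k2 n_k$.

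The key combinatorial input is the double count of incidences between lines and $P_{\ge3}$:
\[
\sum_{p\in P_{\ge3}} m_p\le 2d .
\]
Since $\binom{k-1}{2}/k$ increases with $k$, this already bounds $\sum\binom{m_p-1}{2}\le 2d\cdot\binom{5}{2}/6=\tfrac{10}{3}d$ when $m\le6$. That estimate is too crude, though, because $n_2$ also enters. Better: $\tau=\binom d2-n_2+\sum\binom{k-1}2 n_k$ and $n_2=\binom d2-\sum\binom k2n_k$, so
\[
\tau=\sum_{k\ge3}\Bigl(\binom k2+\binom{k-1}2\Bigr)n_k=\sum_{k\ge3}(k-1)^2n_k+n_2
\]
holds trivially, and the real content lies in how many double points are forced. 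We therefore use a second relation. Two points of $P_{\ge3}$ share at most one line, and each line contains at most two such points, so the lines through points of $P_{\ge3}$ define a multigraph structure: every line lies on at most two points of $P_{\ge3}$. A line through $p$ with multiplicity $k$ meets the other $d-k$ lines. At most one other point of $P_{\ge3}$ absorbs some of these, namely at most $m-1$ of them, and the rest give double points on that line. Counting double points line by line gives
\[
2n_2\ \ge\ \sum_{L}\Bigl(d-1-\sum_{p\in L\cap P_{\ge3}}(m_p-1)\Bigr).
\]
Substituting this lower bound on $n_2$ into the expression for $\tau$ turns the problem into maximizing a function of the multiplicities. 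Each point contributes roughly $(k-1)^2$ minus a penalty linear in $k$. The lines fall into three classes, carrying two, one or zero points of $P_{\ge3}$, and the maximization becomes a small integer program in the numbers $n_3,\dots,n_6$ and in the counts of these three classes of lines.

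Next we solve this program for $d=14,15,16$ with $m\le6$, and then separately for $(d,m)=(14,4)$ and $(16,5)$. The optimum should come from configurations in which every line carries two points of maximal multiplicity, so that $\sum_p m_p=2d$. This is a constrained problem: points of multiplicity $m$ paired along lines, which looks like a pencil-type structure. We check each case by hand, possibly with a short enumeration, to reach the bounds $121,141,160,112,156$. The final statement then follows by comparing these bounds with the $\tau$ values in Theorem \ref{thmC}(i), which are $125,126$, and with Tables \ref{tab:1516} and \ref{tab:1516b}.

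The main obstacle is making the optimization tight. The naive inequalities probably overshoot, so further geometric constraints will be needed. Two points of $P_{\ge3}$ share at most one line. The graph on $P_{\ge3}$ whose edges are the lines through two such points is a simple graph of maximal degree at most $m$, and a point of multiplicity $k$ has at most $k$ neighbours. For the case $m\le6$ we would first try to bound $|P_{\ge3}|$ from this graph structure combined with $\sum m_p\le 2d$.
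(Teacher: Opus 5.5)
There is a genuine gap, and it sits where you place the ``real content''.

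First, your starting identity is wrong. One has $\binom d2-n_2+\sum_{k\ge3}\binom{k-1}{2}n_k=\sum_{k\ge3}(k-1)^2n_k=\tau(\A)-n_2$, not $\tau(\A)$: the coefficient $(k-1)^2-\binom k2$ vanishes for $k=2$, it is not $-1$. The correct formula is \eqref{eq:tauT}, $\tau(\A)=\binom d2+\sum_{v\in V}\binom{e_v-1}{2}$, with $V=P_{\ge3}$ and $e_v=m_v$. So double points play no role: $\tau$ is fixed by $d$ and the multiset $(e_v)$, and the only question is which multisets can occur.

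Second, your line-by-line count of double points is an identity, not an inequality. On every line $n_2^L=d-1-\sum_{p\in L\cap V}(m_p-1)$ exactly, and summing over $L$ only returns $n_2=\binom d2-\sum_v\binom{e_v}{2}$ from \eqref{eq:nk}. Let $F$, $H$, $Z$ be the numbers of lines through two, one and zero points of $V$. Your integer program then encodes only $\sum_v e_v=2F+H\le 2d$, which is far too weak. For $d=14$ it admits $(e_v)=(6,6,6,6,4)$, with $\sum_v e_v=28$, $n_2=25\ge0$ and $\tau=91+43=134$. This exceeds the values $125,126$ of Theorem \ref{thmC}(i).

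The missing ingredient is to use the simple-graph structure as a constraint on the number of lines, not to bound $|V|$; the bad example has only five points. The $F$ lines through two points of $V$ are the edges of a simple graph on $V$ in which $v$ has degree at most $e_v$. Hence $F\le F_{\max}(e)$, the largest number of edges of such a graph. Combined with $F+H\le d$ and $2F+H=\sum_ve_v$, this gives
\[
\sum_v e_v-F_{\max}(e)\ \le\ \sum_v e_v-F\ =\ F+H\ \le\ d .
\]
In the example, five points span at most $\binom52=10$ such lines, so at least $28-10=18>14$ lines would be needed. This is the paper's argument. Maximizing $\sum_v\binom{e_v-1}{2}$ over $3\le e_v\le m$ subject to this single inequality is a finite search, and it gives $121$, $141$, $160$, $112$, $156$.

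Your guess about where the optimum lies is also off. The maximizers are, for instance, $(5^4,4^2)$, $(5^6)$, $(6,5^5)$, $(4^7)$, $(5^6)$. For $d=14$ no point of multiplicity $6$ occurs at all. At $(6,5^5)$ for $d=16$ one has $\sum_ve_v=31<2d$, so one line carries only one point of $V$. The trade-off is governed by $F_{\max}(e)\le\binom{|V|}{2}$, which penalizes concentrating the multiplicity on few points. Once the bounds are established, your final comparison with Theorem \ref{thmC}(i) and the tables is fine.
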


We complement this by a dimension count. Writing $e(\A)$ for the expected
dimension of the realization space of $\LL(\A)$, we show in Section
\ref{sec:obstruction} that, among the numerical types left by
Theorem \ref{thmC}, the condition $e(\A)\ge1$ together with a purely
combinatorial incidence inequality is met by none of the ten types in degree
$14$, by $381$ of $1092$ types in degree $15$ and by $346$ of $1376$ types in
degree $16$. Since $e(\A)\ge1$ is sufficient but not necessary for a non-empty
realization space to be positive-dimensional, this does not exclude any type --
it only identifies the types whose non-empty realization spaces are
automatically positive-dimensional.

\subsection*{Conventions}

All arrangements are essential arrangements of pairwise distinct lines in
$\PP^2_\C$. For a line $L\in\A$ we write $n_k^L$ for the number of points of
multiplicity $k$ of $\A$ lying on $L$, and
\[
r_L=n_2^L+n_3^L+\dots+n_m^L
\]
for the total number of singular points of $\A$ on $L$. We abbreviate
$\B=\A\setminus\{L\}$. When $\A,\A'$ is a pair with
$\LL(\A)\cong\LL(\A')$, we fix such an isomorphism once and for all, write
$L'\in\A'$ for the line corresponding to $L\in\A$, and put
$\B'=\A'\setminus\{L'\}$. Then $\LL(\B)\cong\LL(\B')$, and $r_L=r_{L'}$.

\section{Numerical and homological preliminaries}
\label{sec:prelim}
We collect the facts used throughout the paper; proofs and references can be
found in \cite{DimcaBook,Dca,DimcaOpen}.

If $2m>d$ then
\begin{equation}\label{eq:d-m}
r=d-m ,
\end{equation}
and if $m=2$ then $\A$ is nodal, and
\begin{equation}\label{eq:nodal}
r=d-2 .
\end{equation}
If $m>2$ and $d\ge2m$ then
\begin{equation}\label{eq:bb}
m-1\le r\le d-m,
\qquad
r\ \ge\ \frac{2d}{m}-2 .
\end{equation}
Equality $r=m-1$ holds if and only if $\A$ is free with exponents $(m-1,d-m)$ -- 
see \cite[Theorem 1.10(2)]{DimcaDer} for the first inequality and for this
equality statement.

For positive integers $d,r$, we put
\[
\tau(d,r)_{\max}=(d-1)^2-r(d-r-1),
\qquad
\tau(d,r)_{\min}=(d-1)(d-r-1).
\]
The du Plessis--Wall inequalities from \cite{duPlessisWall} give us
\begin{equation}\label{eq:dpw}
\tau(d,r)_{\min}\ \le\ \tau(\A),
\qquad\text{and}\qquad
\tau(\A)\ \le\ \tau(d,r)_{\max}\ \text{ if }r<d/2,
\end{equation}
and for $r\ge d/2$ the sharper upper bound
\begin{equation}\label{eq:dpwp}
\tau(\A)\ \le\ \tau(d,r)_{\max}'
:=\tau(d,r)_{\max}-\binom{2r+2-d}{2}.
\end{equation}
The function $r\mapsto\tau(d,r)_{\max}$ for $r\le(d-1)/2$, continued by
$r\mapsto\tau(d,r)'_{\max}$ for $r\ge d/2$, is strictly decreasing on
$[0,d]$. Note that for $d$ odd and $r=(d-1)/2$ both expressions agree. Recall that
$\A$ is free if and only if $\nu(\A)=0$, and nearly free if and only if
$\nu(\A)=1$, see \cite{Dfree,DSnf}. By \eqref{eq:nu}, for $r<(d-2)/2$ this means
that equality in the upper bound of \eqref{eq:dpw} characterizes free
arrangements and $\tau(\A)=\tau(d,r)_{\max}-1$ characterizes nearly free ones.
Equality in \eqref{eq:dpwp} for $r\ge d/2$ characterizes \textit{maximal Tjurina}
arrangements of type $(d,r)$, see \cite{maxTjurina}.

We shall often use \eqref{eq:nu} in the following form. For every integer $k$
one has $\tau(d,k)_{\max}\ge(d-1)^2-(d-1)^2/4$, hence
$\tau(d,k)_{\max}\ge\ceil*{3(d-1)^2/4}$. Since $r\mapsto\tau(d,r)_{\max}$ is
decreasing for $r\le(d-1)/2$, \eqref{eq:nu} gives, for every integer
$s\le(d-1)/2$,
\begin{equation}\label{eq:taunu}
r(\A)\ge s\ \Longrightarrow\ \tau(\A)\ \le\ \tau(d,s)_{\max}-\nu(\A).
\end{equation}
In particular, if $r(\A)\ge s$ and $\A$ is neither free nor nearly free, then
$\tau(\A)\le\tau(d,s)_{\max}-2$.

If $\A$ is plus-one generated (POG for short) with exponents $(d_1,d_2,d_3)$, then
$d_1=r$ and $d_1+d_2=d$ by \cite[Theorem 2.3]{3syz}, so that the second exponent is $d_2=d-r$, and then we have
\begin{equation}\label{eq:d3}
\tau(\A)=(d-1)^2-r(d-r-1)-(d_3-d+r+1),
\end{equation}
while $\nu(\A)=d_3-d_2+1$, see \cite[Propositions 2.1 and 3.7]{3syz}. Moreover, one has
$d_2\le d_3\le d-2$, and the last inequality follows from \cite{Sch}.

We shall repeatedly use the following consequence of these facts.

\begin{lemma}\label{lem:rm}
Let $\A$ be an arrangement of $d$ lines with $m=m(\A)\ge3$, $d\ge2m+1$ and
$r(\A)=m$. Then $\A$ is either free with exponents $(m,d-m-1)$ or POG with
exponents $(m,d-m,d_3)$, where $d-m\le d_3\le d-2$.
\end{lemma}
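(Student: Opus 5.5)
The plan is to produce two Jacobian relations of degrees $m$ and $d-m$ that are not proportional, and then apply the structure theorem from \cite{3syz}. That theorem says: if $D_0(f)$ contains relations $\rho_1,\rho_2$ of degrees $d_1\le d_2$ with $d_1+d_2=d$, $d_1=r$, and $\rho_2\notin S\rho_1$, then $\A$ is either free with exponents $(d_1,d-1-d_1)$ or POG with exponents $(d_1,d-d_1,d_3)$. Once the two relations are found, the lemma follows directly. The bound $d-m\le d_3\le d-2$ is the general fact recalled just before the lemma, namely $d_2\le d_3\le d-2$ with $d_2=d-r=d-m$ (the upper bound being from \cite{Sch}).

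The first relation is a nonzero $\rho_1\in D_0(f)_m$, which exists because $r=m$ by hypothesis. For the second, choose a point $p$ of multiplicity $m$ and coordinates with $p=(0:0:1)$. Write $f=gh$, where $g\in\C[x,y]$ is the product of the $m$ lines through $p$ and $h$ is the product of the remaining $d-m$ lines. The classical construction behind the upper bound $r\le d-m$ in \eqref{eq:bb} (see \cite{DimcaDer}) gives an explicit relation $\rho_2$ of degree $d-m$. One combines the Euler derivation with $\partial_z$ and uses that $g$ is independent of $z$, so that $g$ divides $f_x,f_y$ up to the term involving $h$. I would write $\rho_2$ down explicitly in this form. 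Its $z$-component involves $h$, while its $x,y$-components are essentially $x h$ and $y h$ corrected by multiples of $h_z$.

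The main step, and the one I expect to be the real obstacle, is showing that $\rho_2\notin S\rho_1$. Suppose $\rho_2=u\rho_1$. Then $\deg u=d-2m\ge1$, using $d\ge2m+1$, so all three components of $\rho_2$ would share the nonconstant factor $u$. I would rule this out by computing the common zeros of the components of $\rho_2$ and checking that their gcd is $1$. Concretely:
\begin{enumerate}[label=(\roman*)]
\item the $z$-component is, up to a unit, a polynomial in which the lines of $h$ appear;
\item the $x$- and $y$-components contain the summands $xh$ and $yh$ in a way that forbids a common linear factor with the $z$-component, since no line through $p$ divides $h$ and no line of $h$ passes through $p$.
\end{enumerate}
If this direct gcd computation turns out to be messy, I would instead argue by restriction. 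Restricting to a general line through $p$, a proportionality $\rho_2=u\rho_1$ would force the splitting type computation of \cite{AbeDimcaSplit} to give $a<m$, which contradicts $a=\min\{r,\floor{(d-1)/2}\}=m$; here the last equality uses $d\ge 2m+1$.

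With independence established, the result of \cite{3syz} cited above applies with $d_1=m$ and $d_2=d-m$. It yields the free case, with exponents $(m,d-m-1)$, when $\rho_2$ lies in the submodule generated in degrees at most $d-m-1$. Otherwise it yields the POG case with exponents $(m,d-m,d_3)$, which completes the proof.
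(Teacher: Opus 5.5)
Your route is the paper's: take $\rho_1\in D_0(f)_m$ and the local relation of degree $d-m$ attached to a point $p$ of multiplicity $m$. Rule out $\rho_2=u\rho_1$, because $\deg u=d-2m\ge1$ would give the three coefficients of $\rho_2$ a common factor. This yields $d_2\le d-m$, hence $d_1+d_2\le d$, and \cite{3syz} gives free or POG. The paper obtains the coprimality of the coefficients by citing \cite[Theorem 1.3(5)]{DimcaDer}. You propose to check it by hand, which is a fine idea.

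The step you call the main obstacle has a concrete error. The relation you describe, with $x$- and $y$-components ``essentially $xh$ and $yh$'', has degree $d-m+1$, not $d-m$, so the gcd sketch in (i)--(ii) is about the wrong object. The correct relation is
\[
\rho_2=h_zE-dh\,\partial_z=(xh_z,\ yh_z,\ zh_z-dh).
\]
It lies in $D_0(f)$ because $E(f)=df$ and $f_z=gh_z$. Coprimality of its coefficients then takes two lines:
\begin{enumerate}[label=(\roman*)]
\item An irreducible common factor cannot divide both $x$ and $y$, so it divides $h_z$, hence $dh$, hence it is a line $\ell$ of $h$.
\item Modulo $\ell$ one has $h_z\equiv(\partial_z\ell)\prod_{\ell_j\ne\ell}\ell_j$. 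This is nonzero, since the lines are distinct and $p=(0:0:1)\notin\ell$ means $\partial_z\ell=\ell(p)\ne0$.
\end{enumerate}
Equivalently, the coefficients vanish simultaneously only where $h=h_z=0$, that is at the multiple points of $h=0$. This is exactly the statement the paper quotes from \cite{DimcaDer}.

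Drop the splitting-type fallback; it is not an argument. By \cite{AbeDimcaSplit}, $a=\min\{r,\floor{(d-1)/2}\}=m$ holds whether or not $\rho_2\in S\rho_1$, so proportionality cannot force $a<m$. Also, the generic splitting type is read off on a general line, not on one through $p$. The rest of your outline is fine: the passage from $\rho_2\notin S\rho_1$ to $d_2\le d-m$ (if $d_2>m$ then $D_0(f)_k=(S\rho_1)_k$ for $k<d_2$), and the bounds on $d_3$.
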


\begin{proof}
Let $d_1\le d_2\le\cdots$ be the degrees of a minimal system of generators of
$D_0(f)$, so $d_1=r=m$, and let $\rho_1$ be a generator of degree $m$. Let $p$
be a point of multiplicity $m$, and let $\tilde D_p\in D_0(f)_{d-m}$ be the
local derivation associated with $p$ in \cite[Theorem 1.3]{DimcaDer}. By
\cite[Theorem 1.3(5)]{DimcaDer}, the three coefficients of $\tilde D_p$ vanish
simultaneously only at the multiple points of the subarrangement formed by the
lines of $\A$ not passing through $p$, that is at finitely many points of
$\PP^2$. If $D_0(f)_{d-m}=(S\rho_1)_{d-m}$, then $\tilde D_p=u\rho_1$ with
$\deg u=d-2m\ge1$, and the three coefficients of $\tilde D_p$ would vanish
along the curve $u=0$, a contradiction. Hence $d_2\le d-m$, and so
$d_1+d_2\le d$. On the other hand $d_1+d_2\ge d-1$, and $\A$ is free if and
only if $d_1+d_2=d-1$, and POG if and only if $d_1+d_2=d$, by
\cite[Theorem 2.3]{3syz}, see also \cite[Theorem 1.1]{ADP}. The bounds on $d_3$
are those recalled above. 
\end{proof}
\begin{remark}
The lemma above is also contained in \cite[Theorem 1.12(2)]{DimcaDer}.
\end{remark}
When $r>m$ the same argument only gives $d_1+d_2\le d+(r-m)$, so that $\A$ need
not be POG -- this is why the cases $r=m$ and $r>m$ are treated separately in
Section \ref{sec:1516}.

For a line $L\in\A$ and $\B=\A\setminus\{L\}$ the addition--deletion sequence
reads, for every integer $k$,
\begin{equation}\label{eq:AD}
0\longrightarrow D_0(f_\B)_{k-1}\longrightarrow D_0(f_\A)_k
\longrightarrow H^0\bigl(L,\mathcal O_L(k+1-r_L)\bigr)
\longrightarrow N(f_\B)_{k+d-3},
\end{equation}
see \cite{DIS,STY} and \cite[(2.6)]{ADP}, applied with $C_1=\B$ and $C_2=L$, see also
\cite[Theorem 1.8]{DimcaDer} for the corresponding exact sequence
$$0\to D_0(f_\B)(-1)\to D_0(f_\A)\to D_0(f''),$$ where $f''$ is the restriction
of $f_\B$ to $L$. Only the exactness of \eqref{eq:AD} at its first three terms
is used below.

We shall also use that
\begin{equation}\label{eq:mono}
r(\B)\le r(\A)\ \text{ for every subarrangement }\B\subset\A,
\qquad
r(\A)\le r(\B)+1\ \text{ if }\B=\A\setminus\{L\}.
\end{equation}
The second inequality is the injectivity in \eqref{eq:AD}. For the first, a
nonzero $\theta\in D_0(f_\A)_k$ is a logarithmic derivation of $\A$, hence of
$\B$, so $\theta(f_\B)=gf_\B$ with $g\in S_{k-1}$, and
$\theta-\frac{g}{\deg f_\B}E\in D_0(f_\B)_k$, where $E$ is the Euler
derivation. This element is nonzero, since $\theta(f_\A)=0\ne E(f_\A)$ shows
that $\theta$ is not a multiple of $E$.

We record next an elementary but useful invariance property, which will be
applied in Section \ref{sec:extremal}, where arrangements defined over
quadratic fields occur.

\begin{lemma}\label{lem:galois}
Let $K\subset\C$ be a subfield and let $\A\colon f=0$ be an arrangement of $d$
lines with $f\in K[x,y,z]$, all lines of $\A$ being defined over $K$.
\begin{enumerate}[label=\textnormal{(\roman*)}]
\item For every field extension $K'$ of $K$ and every $r$ one has
\[
\dim_{K'}\AR(f\otimes_KK')_r=\dim_K\AR(f)_r .
\]
In particular $r(\A)$ does not
depend on the field over which it is computed, and a Jacobian relation of
minimal degree may be chosen with all its coefficients in $K$.
\item Let $\sigma\colon K\to\C$ be a field embedding and let $\A^\sigma\colon
f^\sigma=0$ be the arrangement obtained by applying $\sigma$ to the coefficients
of the linear forms. Then $\LL(\A^\sigma)\cong\LL(\A)$ and
$r(\A^\sigma)=r(\A)$.
\end{enumerate}
\end{lemma}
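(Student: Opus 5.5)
For part (i), I would view the Jacobian relations of degree $r$ as the kernel of a $K$-linear map and use that ranks of matrices over $K$ do not change under field extension. Concretely, $f_x,f_y,f_z\in K[x,y,z]_{d-1}$, and the map
\[
\phi_r\colon S_r^3\to S_{r+d-1},\qquad (a,b,c)\mapsto af_x+bf_y+cf_z,
\]
is represented, in the monomial bases, by a matrix with entries in $K$. So $\AR(f)_r=\ker\phi_r$ over $K$. The corresponding kernel over $K'$ is $\ker(\phi_r\otimes_KK')=(\ker\phi_r)\otimes_KK'$, since the rank of a matrix is computed by nonvanishing of minors and is therefore field independent. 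This gives the dimension equality. Taking $K'=\C$ and the least $r$ with a nonzero kernel, the index $r$ is the same over $K$ and over $\C$. A $K$-basis of $\ker\phi_r$ is also a $\C$-basis, so a minimal-degree relation can be chosen with coefficients in $K$.

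For part (ii), extend $\sigma$ coefficientwise to a ring map $K[x,y,z]\to\C[x,y,z]$. This map commutes with partial derivatives, so $(f^\sigma)_x=(f_x)^\sigma$, and likewise for $y$ and $z$. Hence the matrix of $\phi_r$ for $f^\sigma$ is $\sigma$ applied to the matrix for $f$. A field embedding preserves the vanishing and nonvanishing of every minor, so the ranks agree. By part (i), $\dim_\C\AR(f^\sigma)_r=\dim_K\AR(f)_r$ for every $r$, and so $r(\A^\sigma)=r(\A)$.

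For the lattice, the lines are $\ell_i^\sigma$ with $\ell_i\in K[x,y,z]_1$. Lines $\ell_{i_1},\dots,\ell_{i_k}$ are concurrent exactly when the $k\times3$ coefficient matrix has rank at most $2$, i.e. when all $3\times3$ minors vanish. Distinctness of two lines is likewise the nonvanishing of some $2\times2$ minor. All these conditions are preserved by $\sigma$ because it is an injective ring homomorphism. So $\ell_i\mapsto\ell_i^\sigma$ is a bijection of lines that preserves all concurrency data, and hence induces $\LL(\A)\cong\LL(\A^\sigma)$.

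I expect no real obstacle here. The only point needing care is to state explicitly that the rank of a matrix over $K$ is unchanged both under extension of scalars and under a field embedding, since both reduce to whether minors vanish.
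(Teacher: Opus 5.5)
Your proposal is correct and follows essentially the same route as the paper. For (i) you realize $\AR(f)_r$ as the kernel of a matrix with entries in $K$ and use that rank does not change under field extension. For (ii) you use that $\sigma$ preserves the vanishing of minors, both of that matrix and of the $2\times2$ and $3\times3$ minors of the coefficient matrix of the linear forms. Your remark that $\sigma$ commutes with partial derivatives, so that the matrix for $f^\sigma$ is $\sigma$ applied entrywise to the one for $f$, makes explicit a point the paper leaves implicit.
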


\begin{proof}
(i) The graded piece $\AR(f)_r$ is the kernel of the $K$-linear map
$S_r^3\to S_{r+d-1}$ sending $(a,b,c)$ to $af_x+bf_y+cf_z$. In the monomial
bases this map is given by a matrix $M_r$ with entries in $K$, and the rank of a
matrix is unchanged by extension of the base field. Hence so is the dimension of
the kernel, and a $K$-basis of $\ker M_r$ remains a basis after extension.

(ii) Applying $\sigma$ to the coefficients of the linear forms of $\A$ carries
$M_r$ entrywise to the corresponding matrix for $f^\sigma$. As $\sigma$ is an
isomorphism of $K$ onto its image, it preserves the vanishing of minors, so
$\operatorname{rank}M_r^\sigma=\operatorname{rank}M_r$ and therefore
$\dim\AR(f^\sigma)_r=\dim\AR(f)_r$ for every $r$, whence $r(\A^\sigma)=r(\A)$
by (i). The same argument applied to the $2\times2$ and $3\times3$ minors of the coefficient
matrix of the linear forms shows that a set of lines of $\A$ is concurrent if
and only if the corresponding set of lines of $\A^\sigma$ is, and that distinct
lines remain distinct. Hence the two intersection lattices are isomorphic.
\end{proof}

\begin{remark}\label{rk:galois}
Lemma \ref{lem:galois}(ii) says that a pair of Galois-conjugate arrangements
can never provide a counterexample to Conjecture \ref{C1}: the two members have
isomorphic intersection lattices, but also equal $r$, hence equal $\nu$ by
\eqref{eq:nu}. This is worth keeping in mind below, since the two
lattices described in Section \ref{sec:extremal} admit exactly two
realizations up to projective equivalence, conjugate over a quadratic field,
and one might otherwise be tempted to regard such a pair as a potential
candidate.
\end{remark}

We shall use the strengthened Hirzebruch-type inequality (see for instance
\cite{PokoraLMY}):
if $n_k=0$ for every $k>2d/3$, then
\begin{equation}\label{eq:hirz}
n_2+\frac34 n_3\ \ge\ d+\sum_{k\ge5}\Bigl(\frac{k^2}{4}-k\Bigr)n_k .
\end{equation}

Finally, we record three elementary identities that will be used repeatedly.
Since $(k-1)^2-\binom k2=\binom{k-1}{2}$, combining the two formulas in
\eqref{eq:nk} gives us
\begin{equation}\label{eq:tauT}
\tau(\A)=\binom d2+\sum_{p}\binom{m_p-1}{2},
\end{equation}
where the sum runs over all singular points $p$ of $\A$, of multiplicity $m_p$, a
node contributing $0$. Thus $\tau(\A)$ is determined by the points of
multiplicity \textit{at least three} alone. Secondly, deleting a line $L$ lowers the
Tjurina number by $2k-3$ at each point of multiplicity $k$ lying on $L$, so we have
\begin{equation}\label{eq:delete}
\tau(\A)-\tau(\B)=\sum_{k\ge2}(2k-3)\,n_k^L ,
\qquad
\sum_{k\ge2}(k-1)\,n_k^L=d-1 ,
\end{equation}
and consequently
\begin{equation}\label{eq:deleterL}
\tau(\A)-\tau(\B)=2(d-1)-r_L .
\end{equation}
Thirdly, counting the incidences between the lines and the singular points of
$\A$, and using $k=2\binom k2-3\binom{k-1}{2}+\binom{k-2}{2}$ together with
\eqref{eq:nk} and \eqref{eq:tauT}, we get
\begin{equation}\label{eq:sumr}
\sum_{L\in\A}r_L=\sum_{k\ge2}k\,n_k
=5\binom d2-3\tau(\A)+\sum_{k\ge4}\binom{k-2}{2}n_k .
\end{equation}
The last sum equals $n_4+3n_5+6n_6+\ldots$, and it is non-negative and vanishes
exactly when $m(\A)\le3$.

\section{First restrictions on a counterexample}
\label{sec:first}

\begin{proposition}\label{P:basic}
Let $\A,\A'$ be a counterexample. Then $4\le m\le r<(d-2)/2$, hence
$d\ge2m+3$. If $m=4$, then $r=\floor*{(d-3)/2}$.
\end{proposition}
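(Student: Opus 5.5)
The plan is to combine the general bounds \eqref{eq:d-m}, \eqref{eq:nodal}, \eqref{eq:bb} with the fact that $d$, $m$ and $\tau$ are lattice invariants, and with the strict monotonicity of the du Plessis--Wall maximum. The inequality $r<(d-2)/2$ is part of the definition \eqref{eq:C2}, so only the other assertions need proof. I would proceed by eliminating cases in the following order.

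\emph{Step 1: $m\ge3$ and $d\ge2m$.} If $m=2$, both $\A$ and $\A'$ are nodal. Then \eqref{eq:nodal} gives $r=r'=d-2$, contradicting $r<r'$. Now let $m\ge3$ and suppose $2m>d$. Then \eqref{eq:d-m} applies to both arrangements, since $m$ is a lattice invariant. This gives $r=r'=d-m$, again a contradiction. Hence $m\ge3$ and $d\ge2m$, so \eqref{eq:bb} is available for both $\A$ and $\A'$.

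\emph{Step 2: $m\le r$.} By \eqref{eq:bb} we have $r\ge m-1$. Suppose $r=m-1$. Then $\A$ is free, so $\tau(\A)=\tau(d,r)_{\max}$. Since $\tau(\A')=\tau(\A)$ and $r'>r$, I would apply \eqref{eq:dpw} or \eqref{eq:dpwp} to $\A'$, according to whether $r'<d/2$ or $r'\ge d/2$. Combined with the strict decrease of the continued function $r\mapsto\tau(d,r)_{\max}$ (resp.\ $\tau(d,r)'_{\max}$), this gives $\tau(\A')<\tau(d,r)_{\max}=\tau(\A)$, a contradiction. Alternatively, one can argue via \eqref{eq:nu}: $\nu(\A)=0$, so $\A'$ would also have $\nu=0$, hence $\A'$ is free; then $r'=m-1=r$ by the equality case in \eqref{eq:bb}. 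So $r\ge m$, and then $2m+2\le2r+2<d$ gives $d\ge2m+3$.

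\emph{Step 3: $m\ge4$ and the case $m=4$.} This uses the lower bound $r\ge 2d/m-2$ from \eqref{eq:bb}. For $m=3$ it gives $2d/3-2\le r<(d-2)/2$, hence $4d-12<3d-6$, i.e.\ $d<6$. This contradicts $d\ge2m+3=9$, so $m\ge4$. For $m=4$ we get $d/2-2\le r<d/2-1$. If $d$ is even, this forces $r=(d-4)/2$. If $d$ is odd, the integer range is $r=(d-3)/2$. In both cases $r=\floor*{(d-3)/2}$.

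I expect the only genuinely delicate point to be Step 2. There one must make sure that the comparison of $\tau(\A')$ with $\tau(\A)$ is done correctly when $r'\ge d/2$, which is exactly why the paper records the strictly decreasing continuation $\tau(d,r)'_{\max}$. The alternative route through $\nu$ and the freeness characterization avoids this issue entirely. All remaining steps are direct arithmetic.
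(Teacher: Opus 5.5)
Your proof is correct and follows the paper's argument essentially step by step; the only cosmetic differences are that for $m=2$ you contradict $r<r'$ rather than $r<(d-2)/2$, and that for $r=m-1$ you bound $\tau(\A')$ directly by \eqref{eq:dpw}/\eqref{eq:dpwp} and strict monotonicity, where the paper invokes \eqref{eq:taunu} with $s=r+1$. Drop your ``alternative'' route via $\nu$, however: the step ``$\nu(\A)=0$, so $\nu(\A')=0$'' is exactly the invariance that Conjecture~\ref{C1} asserts, and justifying it requires the very comparison $\tau(\A')<\tau(d,r)_{\max}$ that you claim it avoids.
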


\begin{proof}
If $2m>d$ then $r=d-m$ by \eqref{eq:d-m} is a lattice invariant, so $r=r'$,
which is impossible and hence $d\ge2m$. If $m=2$ then $r=d-2$ by
\eqref{eq:nodal}, contradicting $r<(d-2)/2$. If $m=3$, then $d\ge6$ and
\eqref{eq:bb} and \eqref{eq:C2} give $2d/3-2\le r<(d-2)/2$, whence $d\le5$, a
contradiction. Hence $m\ge4$.

Next, $r\ge m-1$ always holds, with equality if and only if $\A$ is free with
exponents $(m-1,d-m)$. If $r=m-1$ then $\tau(\A)=\tau(d,r)_{\max}$, while
$r'\ge r+1$ and $r+1\le(d-1)/2$ give
$\tau(\A')\le\tau(d,r+1)_{\max}<\tau(d,r)_{\max}$ by \eqref{eq:taunu}, so that
$\tau(\A)>\tau(\A')$, contradicting $\LL(\A)\cong\LL(\A')$. Hence $m\le r$, and
$m\le r<(d-2)/2$ yields $d\ge2m+3$.

Finally, let $m=4$. Then \eqref{eq:bb} gives $d/2-2\le r<(d-2)/2$. If $d=2d'+1$, then
the unique integer solution is $r=d'-1$, and if $d=2d'$ it is $r=d'-2$. Both
are $\floor*{(d-3)/2}$.
\end{proof}

The same mechanism, applied one step higher, constrains the case $r=m$.

\begin{proposition}\label{P:rm}
Let $\A,\A'$ be a counterexample with $r=m$. Then $d\le3m+1$. Moreover, there is
no counterexample with $r=m=4$, and none with $d\le12$.
\end{proposition}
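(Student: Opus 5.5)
The plan is to compare the Tjurina number forced on $\A$ by Lemma \ref{lem:rm} with the upper bound forced on $\A'$ by \eqref{eq:taunu}, exactly as in the proof of Proposition \ref{P:basic} but one step higher. The third claim should then reduce to the second.

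\emph{The bound $d\le 3m+1$.} By Proposition \ref{P:basic} we have $m\ge4$ and $d\ge 2m+3$, so Lemma \ref{lem:rm} applies to $\A$. Hence $\A$ is either free with exponents $(m,d-m-1)$, or POG with exponents $(m,d-m,d_3)$ and $d-m\le d_3\le d-2$. In the POG case \eqref{eq:d3} gives
\[
\tau(\A)=\tau(d,m)_{\max}-(d_3-d+m+1)\ \ge\ \tau(d,m)_{\max}-(m-1),
\]
and in the free case $\tau(\A)=\tau(d,m)_{\max}$. On the other side, $r'\ge m+1$, and $m+1\le (d-1)/2$ because $d\ge 2m+3$. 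So \eqref{eq:taunu} with $s=m+1$ gives $\tau(\A')\le\tau(d,m+1)_{\max}$. A direct computation gives
\[
\tau(d,m)_{\max}-\tau(d,m+1)_{\max}=(m+1)(d-m-2)-m(d-m-1)=d-2m-2\ \ge1 .
\]
This already excludes the free case. Since $\tau(\A)=\tau(\A')$ is a lattice invariant, we get $d-2m-2\le m-1$, that is $d\le 3m+1$.

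\emph{Reduction of $d\le12$ to $r=m=4$.} If $d\le 12$, then $d\ge 2m+3$ forces $m\le4$, so $m=4$. Proposition \ref{P:basic} then gives $r=\floor{(d-3)/2}$ with $d\ge 11$, i.e. $d\in\{11,12\}$ and $r=4=m$. So the last two claims both reduce to excluding $r=m=4$, and then by $d\le 3m+1=13$ and $d\ge 11$ only $d\in\{11,12,13\}$ remain. Note that $\floor{(d-3)/2}=4$ already excludes $d=13$, leaving $d\in\{11,12\}$.

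\emph{The case $r=m=4$, $d\in\{11,12\}$.} Here the squeeze above becomes numerical.
\begin{itemize}
\item For $d=11$, we have $\tau(11,4)_{\max}=76$ and $\tau(11,5)_{\max}=75$, so $\tau\in\{73,74,75\}$.
\item For $d=12$, we have $\tau(12,4)_{\max}=93$ and $\tau(12,5)_{\max}=91$, so $\tau\in\{90,91\}$.
\end{itemize}
Moreover, $\nu(\A)\ge1$ since $d_3\ge d-m$. Note that in both degrees $\A'$ lies in the balanced range.

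By \eqref{eq:tauT}, $\tau=\binom d2+n_3+3n_4$. This fixes $n_3+3n_4$, and the first identity of \eqref{eq:nk}, $n_2+3n_3+6n_4=\binom d2$, then leaves a short list of numerical types $(n_2,n_3,n_4)$. I would eliminate these with three tools:
\begin{itemize}
\item the Hirzebruch inequality \eqref{eq:hirz}, $n_2+\tfrac34 n_3\ge d$, which applies since $m=4\le 2d/3$;
\item the line constraints $\sum_k (k-1)n_k^L=d-1$, together with the incidence count \eqref{eq:sumr};
\item the deletion relations \eqref{eq:deleterL} and \eqref{eq:mono}, which bound $r$ and $\tau$ of $\B=\A\setminus\{L\}$ by the du Plessis--Wall inequalities \eqref{eq:dpw} for $d-1$ lines.
\end{itemize}
For the third tool, I would choose $L$ with $r_L$ large, so that $\tau(\B)$ is as large as possible, and seek a contradiction between $r(\B)\in\{3,4\}$ and $\tau(\B)$ being too large.

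This last step is the main obstacle. The numerical window is narrow but not empty, so I expect a case-by-case elimination of the surviving types. For stubborn types, I would also use the POG structure of $\A$ (exponents $(4,d-4,d_3)$ with $d_3\le d-2$) through the sequence \eqref{eq:AD} to control $r(\B)$.
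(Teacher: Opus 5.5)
Your first two steps are correct and match the paper. The squeeze $\tau(d,m)_{\max}-(m-1)\le\tau(\A)=\tau(\A')\le\tau(d,m+1)_{\max}$ gives $d\le3m+1$; the paper keeps the term $-\nu(\A')$ from \eqref{eq:taunu}, which you drop. Your reduction of $d\le12$ to $r=m=4$, $d\in\{11,12\}$ is also the paper's.

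The gap is the last step. You leave it as an unexecuted case analysis, and it cannot be done with counting conditions on the lattice. Consider what your windows mean. Since $r'\ge5\ge(d-2)/2$, \eqref{eq:nu} gives $\nu(\A')=75-\tau$ for $d=11$ and $\nu(\A')=91-\tau$ for $d=12$. So for $d=12$ your window $\tau\in\{90,91\}$ consists exactly of the cases where $\A'$ is nearly free or free.

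Such cases pass every necessary condition on $\LL(\A)$. Take the type $(n_2,n_3,n_4)=(0,16,3)$ with $\tau=91$. It satisfies \eqref{eq:hirz} with equality, the line relations, the incidence count, and even $r_L\le5$ on every line. It is the lattice of the Ceva arrangement $(x^4-y^4)(y^4-z^4)(z^4-x^4)=0$, which is free with exponents $(5,6)$. Ruling out a second realization of this lattice with $r=4$ is an instance of Terao's Conjecture in degree $12$, and no incidence count can do it.

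Your use of deletion is also too weak as proposed. Bounding $\tau(\B)$ by du Plessis--Wall gives nothing here, since for $\B$ itself a smaller $r(\B)$ only relaxes the upper bound.

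The missing input is external: Terao's Conjecture for $d\le14$ \cite{BK} and its nearly free analogue \cite{DIM2}. If $\A'$ were free or nearly free, then $\A$ would be too, with the same exponents, contradicting $r<r'$. Hence $\nu(\A')\ge2$. Keeping this term in your squeeze gives $d-2m\le m-1$, that is $d\le3m-1=11$. This kills $d=12$, and for $d=11$ it removes $\tau\in\{74,75\}$, forcing $\tau=73$ and $d_3=9$.

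For $d=11$ you then need the dichotomy of Lemma \ref{lem:del}, not a bound on $\tau(\B)$. For each line $L$ there are two cases:
\begin{enumerate}[label=\textnormal{(\alph*)}]
\item $r(\B)=4$. Then \eqref{eq:AD} with $k=4$ embeds $D_0(f_\A)_4\ne0$ into $H^0(L,\mathcal O_L(5-r_L))$, so $r_L\le5$.
\item $r(\B)=3<4\le r(\B')$. Then $\B,\B'$ is a counterexample with ten lines, which is impossible since every counterexample has $d\ge2m+3\ge11$.
\end{enumerate}
So $r_L\le5$ for every line. Then \eqref{eq:sumr} gives $\sum_Lr_L=275-3\cdot73+n_4=56+n_4\ge57>55$. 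This is the paper's Theorem \ref{thm:eleven}, which the proof of the proposition invokes.
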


\begin{proof}
Since $r=m$, Lemma \ref{lem:rm} shows that either $\A$ is free with exponents
$(m,d-m-1)$, which is excluded exactly as in the proof of Proposition
\ref{P:basic}, or $\A$ is POG with exponents $(m,d-m,d_3)$. In the latter case \eqref{eq:d3} gives
\begin{equation}\label{eq:TA}
\tau(\A)=(d-1)^2-m(d-m-1)-(d_3-d+m+1).
\end{equation}
On the other hand $m=r<(d-2)/2$ implies $m+1\le(d-1)/2$, so that $r'\ge m+1$
and \eqref{eq:taunu} give
\begin{equation}\label{eq:TAp}
\tau(\A')\le\tau(d,m+1)_{\max}-\nu(\A').
\end{equation}
Expanding, $\tau(d,m+1)_{\max}-\tau(\A)=d_3-2d+3m+3$, so \eqref{eq:TAp} and
$\tau(\A)=\tau(\A')$ force $d_3\ge2d-3m-3+\nu(\A')$. Since $d_3\le d-2$ by \cite{Sch},
we obtain $d-2\ge2d-3m-3$, that is $d\le3m+1$.

Suppose now $r=m=4$. By Proposition \ref{P:basic} we have
$4=r=\floor*{(d-3)/2}$, so $d\in\{11,12\}$, and $d=11$ is excluded by
Theorem \ref{thm:eleven} below. For $d=12$ we know that Terao's Conjecture and
its nearly free analogue hold, by \cite{BK} and \cite{DIM2}: if $\A'$ were free
or nearly free, then so would be $\A$, with the same exponents, contradicting
$r<r'$. Hence $\nu(\A')\ge2$, and $d_3\le d-2$ yields $d\le3m-1=11$, a
contradiction.

Finally, if $d\le12$ then Proposition \ref{P:basic} forces $m=4$ and hence
$r=4=m$, which has just been excluded.
\end{proof}

\begin{lemma}\label{lem:nofree}
Let $\A,\A'$ be a counterexample with $d\le15$. Then, for
every line $L\in\A$, the deleted arrangement $\B=\A\setminus\{L\}$ is not free.
\end{lemma}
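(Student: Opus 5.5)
The plan is to argue by contradiction and reduce to known instances of Terao's Conjecture together with the addition theory for free arrangements. Suppose $\A,\A'$ is a counterexample with $d\le15$ and $L\in\A$ is a line such that $\B=\A\setminus\{L\}$ is free, with exponents $(d_1,d_2)$, $d_1\le d_2$, $d_1+d_2=d-2$. The arrangement $\B'=\A'\setminus\{L'\}$ has $\LL(\B')\cong\LL(\B)$ and consists of $d-1\le14$ lines.

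First I will invoke the fact that Terao's Conjecture holds for arrangements of at most $14$ lines. This is the Barakat--Kühne verification \cite{BK}, which is already used in the proof of Proposition \ref{P:rm}. It gives that $\B'$ is free with the same exponents $(d_1,d_2)$. I regard this as the main external input, and I must check that the cited range really reaches $14$ lines.

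Next I will use the known structure of one-line additions to a free arrangement, namely Abe's theorem that adding a line to a free arrangement yields a free or a plus-one generated arrangement. In the form I intend to use it:
\begin{itemize}
\item if $r_L\in\{d_1+1,d_2+1\}$, then $\A$ is free with exponents $(d_1,d_2+1)$ or $(d_1+1,d_2)$ respectively;
\item otherwise $\A$ is POG with exponents $(d_1+1,d_2+1,d-1-r_L)$.
\end{itemize}
The key point is that these exponents depend only on $d_1,d_2,d$ and $r_L$, and all of these are the same for the pairs $(\A,\B,L)$ and $(\A',\B',L')$. Consequently $r(\A)=d_1(\A)$, the minimal exponent, is read off from the same data on both sides.

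If one wants to avoid quoting the full theorem, the relevant part can be extracted from the exact sequence \eqref{eq:AD}. Since $\B$ is free, $N(f_\B)=0$, so the map $D_0(f_\A)_k\to H^0(L,\mathcal O_L(k+1-r_L))$ is surjective. Hence
\[
\dim D_0(f_\A)_k=\dim D_0(f_\B)_{k-1}+\max\{0,\,k+2-r_L\}.
\]
The first term is determined by $(d_1,d_2)$, since $D_0(f_\B)\cong S(-d_1)\oplus S(-d_2)$. It follows that the Hilbert function of $D_0(f_\A)$, and in particular $r(\A)=\indeg D_0(f_\A)$, is a function of $d_1$, $d_2$ and $r_L$ alone. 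The identical computation applies to $\A'$, using that $\B'$ is free with the same exponents and that $r_{L'}=r_L$. Therefore $r(\A)=r(\A')$, which contradicts \eqref{eq:C2} and proves the lemma.

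This direct argument makes the addition theorem unnecessary. The only genuine obstacle is thus the first step, the freeness of $\B'$, which requires Terao's Conjecture in degree $d-1\le14$. This is exactly where the hypothesis $d\le15$ enters.
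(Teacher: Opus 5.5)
Your proposal is correct and is essentially the paper's proof. Terao's Conjecture in degree $d-1\le14$ \cite{BK} makes $\B'$ free with the same exponents, $r_L=r_{L'}$ is a lattice invariant, and the addition theorem then forces $r(\A)=r(\A')$. Your alternative surjectivity argument in \eqref{eq:AD} is also valid, since $N(f_\B)=0$ for free $\B$, and it even shows that the whole Hilbert function of $D_0(f_\A)$ depends only on $d_1,d_2,r_L$. One harmless slip: in the non-free case the third exponent is $r_L-1$, not $d-1-r_L$. For example, four general lines are POG with exponents $(2,2,2)$, which also follows from \eqref{eq:d3} and \eqref{eq:deleterL}; your argument never uses this value.
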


\begin{proof}
Suppose that $\B$ is free with exponents $(d_1,d_2)$, $d_1\le d_2$, so that
$d_1+d_2=d-2$, since $\B$ consists of $d-1$ lines. By the addition theory for free arrangements,
see \cite[Theorem 1.4]{POG}, the arrangement $\A=\B\cup\{L\}$ is free with
exponents $(d_1,d_2+1)$ if $|\B^L|=d_1+1$, free with exponents $(d_1+1,d_2)$ if
$|\B^L|=d_2+1$, and POG with exponents $(d_1+1,d_2+1,|\B^L|-1)$ otherwise. In
all three cases
\[
r(\A)\in\{d_1,\,d_1+1\},
\]
and which of the three occurs is decided by $d_1$, $d_2$ and $|\B^L|$ alone.

Let $L'\in\A'$ and $\B'=\A'\setminus\{L'\}$ be as in the Conventions, so that
$\LL(\B)\cong\LL(\B')$. As $\deg\B=d-1\le14$, Terao's Conjecture \cite{BK}
makes $\B'$ free with the same exponents $(d_1,d_2)$. Moreover $|\B^L|=r_L$ is
the number of singular points of $\A$ lying on $L$, hence a lattice invariant,
so $|\B'^{L'}|=|\B^L|$. Therefore $\A$ and $\A'$ fall into the same case of the
trichotomy above and have the same exponents. In particular, $r(\A)=r(\A')$,
contradicting $r(\A)<r(\A')$.
\end{proof}

\subsection{The deletion lemma}
The following observation drives most of the paper. It says that a
counterexample either restricts the number of singular points on each of its
lines, or produces a counterexample with one line less.

\begin{lemma}\label{lem:del}
Let $\A,\A'$ be a counterexample, let $L\in\A$,
$\B=\A\setminus\{L\}$, and let $\B'=\A'\setminus\{L'\}$. Then
$r(\B)\in\{r-1,r\}$ and $r(\B')\ge r$. Moreover
\begin{enumerate}[label=\textnormal{(\roman*)}]
\item if $r(\B)=r$, then $r_L\le r+1$;
\item if $r(\B)=r-1$, then $\B,\B'$ is a counterexample as in \eqref{eq:C2},
of degree $d-1$, with $r(\B)=r-1$ and $m(\B)\le m$.
\end{enumerate}
In particular, if there is no counterexample of degree $d-1$ with minimal
degree of a Jacobian relation $r-1$ and maximal multiplicity at most $m$, then
every line of $\A$ contains at most $r+1$ singular points of $\A$.
\end{lemma}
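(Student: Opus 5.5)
The plan is to read everything off the two monotonicity statements in \eqref{eq:mono} and the first three terms of the addition--deletion sequence \eqref{eq:AD}. First, $\B\subset\A$ gives $r(\B)\le r(\A)=r$, and $\B=\A\setminus\{L\}$ gives $r\le r(\B)+1$. Together these yield $r(\B)\in\{r-1,r\}$. On the primed side, the second inequality of \eqref{eq:mono} applied to $\B'=\A'\setminus\{L'\}$ gives $r(\B')\ge r(\A')-1$. Since $r'>r$ are integers, $r'\ge r+1$, hence $r(\B')\ge r$.

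For (i), I would apply \eqref{eq:AD} with $k=r$. If $r(\B)=r$, then $D_0(f_\B)_{r-1}=0$, since $r-1<\indeg D_0(f_\B)$. Exactness at the second term then makes $D_0(f_\A)_r\to H^0(L,\mathcal O_L(r+1-r_L))$ injective. By definition of $r=\mdr(f_\A)$, the source $D_0(f_\A)_r$ is nonzero, so the target must be nonzero as well. This forces $r+1-r_L\ge0$, that is, $r_L\le r+1$. Only exactness at the first two terms is needed here, consistent with the remark after \eqref{eq:AD}.

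For (ii), I would check the three conditions of \eqref{eq:C2} for the pair $\B,\B'$ of degree $d-1$. The lattice isomorphism $\LL(\B)\cong\LL(\B')$ holds by the Conventions. We have $r(\B)=r-1<r\le r(\B')$ from the first paragraph. Finally, $r(\B)=r-1<\frac{d-2}{2}-1=\frac{d-4}{2}<\frac{(d-1)-2}{2}$, which is the unbalanced condition in degree $d-1$. Deleting a line cannot raise any multiplicity, so $m(\B)\le m$. One should also note that $\B$ is still essential with distinct lines. This follows because $d\ge 2m+3$ by Proposition \ref{P:basic}, so the $d-1$ lines of $\B$ are not all concurrent.

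The final sentence is then immediate. If no counterexample of degree $d-1$ exists with minimal degree of a Jacobian relation $r-1$ and maximal multiplicity at most $m$, then case (ii) cannot occur for any $L$. Hence $r(\B)=r$ for every $L$, and (i) gives $r_L\le r+1$. I do not expect a genuine obstacle. The only point needing care is the injectivity argument in (i), which requires that the degree shift in \eqref{eq:AD} is exactly $D_0(f_\B)_{k-1}\to D_0(f_\A)_k$ with target $\mathcal O_L(k+1-r_L)$, so that the vanishing of $H^0$ in negative degree applies precisely when $r_L\ge r+2$.
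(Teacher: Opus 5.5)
Your proposal is correct and follows essentially the same route as the paper's proof: \eqref{eq:mono} gives the range of $r(\B)$ and the bound $r(\B')\ge r$, the sequence \eqref{eq:AD} with $k=r$ gives (i), and (ii) is a direct check of \eqref{eq:C2} in degree $d-1$. The only cosmetic difference is that you justify essentiality of $\B$ via $d\ge2m+3$, whereas the paper uses $m(\B)\le m<d-1$; the two amount to the same thing.
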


\begin{proof}
By \eqref{eq:mono} we have $r(\B)\le r\le r(\B)+1$ and
$r(\B')\ge r(\A')-1\ge r$.

(i) If $r(\B)=r$, then $D_0(f_\B)_{r-1}=0$, so \eqref{eq:AD} with $k=r$ embeds
$D_0(f_\A)_r\ne0$ into $H^0\bigl(L,\mathcal O_L(r+1-r_L)\bigr)$. Hence this
space is non-zero, that is $r_L\le r+1$.

(ii) We have $\LL(\B)\cong\LL(\B')$, and $\B$ is essential, since
$m(\B)\le m<d-1$. Moreover, $r(\B)=r-1<(d-2)/2-1<\bigl((d-1)-2\bigr)/2$, and $r(\B)=r-1<r\le r(\B')$.
Hence $\B,\B'$ satisfies \eqref{eq:C2} in degree $d-1$.
\end{proof}

We shall also use the following two consequences of the results of
Section \ref{sec:prelim}.

\begin{lemma}\label{lem:twoline}
Let $\A,\A'$ be a counterexample with $d\ge12$ and
$m\le5$. Then there are no two lines $L_1,L_2\in\A$ such that every point of
multiplicity $4$ of $\A$ lies on $L_1\cup L_2$ and every point of
multiplicity $5$ of $\A$ is the point $L_1\cap L_2$. In particular, $n_4\ge3$
if $m=4$, and $(n_4,n_5)\ne(0,1)$ if $m=5$.
\end{lemma}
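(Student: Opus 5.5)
The plan is to find a subarrangement of $\A$ whose $r$ is forced to be large, and then use the monotonicity \eqref{eq:mono} to transfer that lower bound to $r(\A)$. This will contradict the condition $r<(d-2)/2$ from \eqref{eq:C2}.

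\emph{Step 1: the subarrangement.} Suppose $L_1,L_2\in\A$ are as in the statement, and put $\CC=\A\setminus\{L_1,L_2\}$, an arrangement of $d-2\ge10$ lines. The multiplicities drop as follows:
\begin{itemize}[label=--]
\item every point of multiplicity $4$ of $\A$ lies on exactly one of $L_1,L_2$, or is $L_1\cap L_2$, so in $\CC$ it has multiplicity at most $3$;
\item the point of multiplicity $5$, if it exists, is $L_1\cap L_2$ and loses two lines, so it becomes a triple point;
\item all other points have multiplicity at most $3$ already.
\end{itemize}
Hence $m(\CC)\le3$. The arrangement $\CC$ is essential, since a pencil of $d-2$ lines would make $m(\A)\ge d-2>5$.

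\emph{Step 2: a lower bound for $r(\CC)$.} If $m(\CC)=2$, then \eqref{eq:nodal} gives $r(\CC)=d-4$. If $m(\CC)=3$, then $d-2\ge6=2m(\CC)$, so \eqref{eq:bb} applies and gives
\[
r(\CC)\ \ge\ \frac{2(d-2)}{3}-2 .
\]
This bound also holds in the nodal case. By \eqref{eq:mono}, $r=r(\A)\ge r(\CC)$.

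\emph{Step 3: the contradiction.} Combining with $r<(d-2)/2$ gives
\[
\frac{2(d-2)}{3}-2\ \le\ r\ <\ \frac{d-2}{2}, \qquad\text{hence}\qquad d<14 .
\]
It remains to check $d\in\{12,13\}$ using integrality:
\begin{itemize}[label=--]
\item for $d=12$: $r\ge\ceil{20/3-2}=5$, but $r<5$;
\item for $d=13$: $r\ge\ceil{22/3-2}=6$, but $r<11/2$.
\end{itemize}
Both are contradictions, so no such pair of lines exists. This argument does not need Proposition \ref{P:rm}.

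\emph{Step 4: the consequences.} We exhibit $L_1,L_2$ in each case.
\begin{itemize}[label=--]
\item If $m=4$ and $n_4\le2$: when $n_4=2$, take $L_1,L_2$ to be lines of $\A$ through the two quadruple points, chosen distinct (possible since each has four lines through it). When $n_4=1$, take a line through the quadruple point and any other line. When $n_4=0$, take any two lines. The hypothesis on points of multiplicity $5$ is vacuous.
\item If $m=5$ and $(n_4,n_5)=(0,1)$: take two distinct lines through the quintuple point.
\end{itemize}
In each case the first part applies, so $n_4\ge3$ when $m=4$, and $(n_4,n_5)\ne(0,1)$ when $m=5$.

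The only delicate point is Step 1: one must check that \emph{every} point of multiplicity at least $4$ really loses enough lines. This is exactly why the $5$-fold point must sit at $L_1\cap L_2$: a $5$-fold point elsewhere on $L_1$ would only drop to multiplicity $4$. Apart from this, everything is arithmetic.
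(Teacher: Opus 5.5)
Your proof is correct and follows the paper's argument: delete $L_1,L_2$, observe $m(\CC)\le3$, bound $r(\CC)$ from below via \eqref{eq:nodal} and \eqref{eq:bb}, and contradict $r(\CC)\le r<(d-2)/2$ using \eqref{eq:mono}; the consequences are then obtained by the same choice of two lines. The only difference is cosmetic: the paper finishes in one step by noting $r(\CC)>(d-3)/2\ge r$ for $d\ge12$ (integrality of $r$), whereas you first derive $d<14$ and then treat $d=12,13$ separately.
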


\begin{proof}
Suppose that such lines exist and put $\CC=\A\setminus\{L_1,L_2\}$, an
arrangement of $d-2\ge10$ lines. A point of multiplicity $4$ of $\A$ loses at
least one line, and a point of multiplicity $5$ loses two, so $m(\CC)\le3$. If
$m(\CC)=2$, then $r(\CC)=d-4$ by \eqref{eq:nodal}, and if $m(\CC)=3$, then
$r(\CC)\ge2(d-2)/3-2$ by \eqref{eq:bb}. As $d\ge12$, in both cases
$r(\CC)>(d-3)/2\ge r$. However, $r(\CC)\le r$ by \eqref{eq:mono}, a
contradiction.

If $m=4$ and $n_4\le2$, then choosing a line of $\A$ through each point of
multiplicity four, and adding an arbitrary further line if these choices give
only one line, we obtain two lines containing all of them. If $m=5$ and
$(n_4,n_5)=(0,1)$, any two lines through the point of multiplicity five will
do.
\end{proof}

\begin{lemma}\label{lem:deltau}
Let $\A,\A'$ be a counterexample with $d\le15$. Then, for
every line $L\in\A$,
\[
\tau(\A\setminus\{L\})\ \le\ \tau(d-1,r)_{\max}-1,
\qquad\text{equivalently}\qquad
r_L\ \le\ 2d-3+\tau(d-1,r)_{\max}-\tau(\A).
\]
\end{lemma}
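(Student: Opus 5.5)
The equivalence of the two forms is immediate from \eqref{eq:deleterL}: $\tau(\B)=\tau(\A)-2(d-1)+r_L$, so $\tau(\B)\le\tau(d-1,r)_{\max}-1$ is the same as $r_L\le 2d-3+\tau(d-1,r)_{\max}-\tau(\A)$. It therefore suffices to bound $\tau(\B)$ for $\B=\A\setminus\{L\}$. I will split according to the two cases of the deletion lemma (Lemma \ref{lem:del}), which gives $r(\B)\in\{r-1,r\}$.

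\emph{Case $r(\B)=r$.} I will apply \eqref{eq:taunu} to $\B$, which has degree $d-1$, with $s=r$. For this I need $r\le(d-2)/2$, and this follows from $r<(d-2)/2$. We obtain $\tau(\B)\le\tau(d-1,r)_{\max}-\nu(\B)$. By Lemma \ref{lem:nofree}, which applies since $d\le15$, the arrangement $\B$ is not free. Hence $\nu(\B)\ge1$, and the claimed bound follows.

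\emph{Case $r(\B)=r-1$.} By Lemma \ref{lem:del}(ii) the pair $\B,\B'$ is a counterexample in degree $d-1$. Here I will use the lattice invariance of $\tau$: $\tau(\B)=\tau(\B')$ and $r(\B')\ge r$. I then apply \eqref{eq:taunu} to $\B'$ with $s=r$, again using $r\le(d-2)/2$, and get $\tau(\B')\le\tau(d-1,r)_{\max}-\nu(\B')$. It remains to check that $\B'$ is not free. By Terao's conjecture in degree $d-1\le14$ \cite{BK}, freeness of $\B'$ would force $\B$ to be free. Alternatively, apply Lemma \ref{lem:nofree} to the counterexample $\A,\A'$ with roles exchanged: its proof shows that $\B'$ free would make $\B$ free with the same exponents, and then $r(\A)=r(\A')$. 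Either way $\nu(\B')\ge1$, so $\tau(\B)=\tau(\B')\le\tau(d-1,r)_{\max}-1$.

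The only delicate point is confirming the hypothesis $s=r\le((d-1)-1)/2$ needed for \eqref{eq:taunu}, together with the non-freeness of $\B'$ in the second case. Both are handled above: the first by $r<(d-2)/2$, the second by Terao's conjecture in degree at most $14$. In fact the argument of the second case covers both cases simultaneously, since $r(\B')\ge r$ always holds by Lemma \ref{lem:del}. So one can simply argue with $\B'$ throughout, using that $\B'$ is non-free because $\B$ is non-free (Lemma \ref{lem:nofree}) and Terao's conjecture holds in degree $d-1\le14$.
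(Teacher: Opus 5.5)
Your proof is correct, and the consolidated argument in your last paragraph is exactly the paper's: you apply \eqref{eq:taunu} in degree $d-1$ to $\B'$, using $r(\B')\ge r$ from Lemma \ref{lem:del}, and you get $\nu(\B')\ge1$ because a free $\B'$ would make $\B$ free by Terao's Conjecture in degree $d-1\le14$, contradicting Lemma \ref{lem:nofree}. The preliminary case split on $r(\B)$ is valid but unnecessary, as you note yourself.
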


\begin{proof}
By Lemma \ref{lem:del} we have $r(\B')\ge r$, and
$r\le(d-3)/2\le\bigl((d-1)-1\bigr)/2$, so \eqref{eq:taunu} applied in degree
$d-1$ gives $\tau(\B)=\tau(\B')\le\tau(d-1,r)_{\max}-\nu(\B')$. If $\nu(\B')=0$,
then $\B'$ is free, hence so is $\B$ by Terao's Conjecture in degree
$d-1\le14$ (\cite{BK}), contradicting Lemma \ref{lem:nofree}. Hence
$\nu(\B')\ge1$, and the second form follows from \eqref{eq:deleterL}.
\end{proof}

Finally, Ziegler's multirestriction gives a lower bound for $r(\A)$ in terms of
the singular points on a single line.

\begin{lemma}\label{lem:ziegler}
Let $\A$ be an arrangement of $d$ lines and $L\in\A$. Let $\A^L$ be the set of
singular points of $\A$ on $L$, let $m^L(p)=m_p-1$ for $p\in\A^L$, and let
$d_1^L\le d_2^L$ be the exponents of the Ziegler multirestriction
$(\A^L,m^L)$, so that $d_1^L+d_2^L=d-1$. Then
\[
r(\A)\ \ge\ d_1^L .
\]
In particular, if $d\ge5$ and every singular point of $\A$ on $L$ is a triple
point, then $r(\A)\ge(d-1)/2$. Hence no line of a counterexample $\A$ as in
\eqref{eq:C2} contains only triple points of $\A$.
\end{lemma}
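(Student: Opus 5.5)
The plan is to use Ziegler's multirestriction map, the standard tool here. Let $\alpha$ be a linear form defining $L$. The multiarrangement $(\A^L,m^L)$ on $L\cong\PP^1$ has defining polynomial $g=\prod_{p\in\A^L}\ell_p^{m_p-1}$, where $\ell_p$ is a linear form on $L$ vanishing at $p$. Its degree is $\sum_p(m_p-1)=d-1$, since every other line of $\A$ meets $L$ in exactly one point. Multiarrangements on $\PP^1$ are always free, with exponents $d_1^L\le d_2^L$ summing to $d-1$. Ziegler's theorem says that restricting a logarithmic derivation of $\A$ to $L$ gives a degree-preserving map
\[
D_0(f_\A)\longrightarrow D_0(\A^L,m^L),
\]
after identifying $D_0$ with the logarithmic derivations killing $f$ (equivalently, with $D(\A)$ modulo the Euler derivation). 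This map has kernel $\alpha D_0(f_\A)$.

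Take $\theta\in D_0(f_\A)_r$ nonzero, with $r=r(\A)$. Among all such relations, divide out the largest possible power of $\alpha$. If $\theta=\alpha\theta'$, then $\theta'$ is again a Jacobian relation of degree $r-1$, which contradicts minimality. So $\theta\notin\alpha D_0(f_\A)$, and its image is a nonzero element of $D_0(\A^L,m^L)$ of degree $r$. The nonzero elements of that module have degree at least $d_1^L$, so $r\ge d_1^L$.

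The one point needing care is the bookkeeping between $D(\A)$, the logarithmic derivations, and the reduced module $D_0$. I will check that the image of a Jacobian relation is nonzero modulo the Euler part and lands in the reduced multiderivation module of the correct degree. I will do this by citing Ziegler's exact sequence $0\to D_0(\A)(-1)\xrightarrow{\alpha}D_0(\A)\to D_0(\A^L,m^L)$, in the form used in \cite[Theorem 1.8]{DimcaDer} quoted above. This is also the left part of \eqref{eq:AD}.

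For the triple-point claim: if every singular point on $L$ is a triple point, then $m^L\equiv2$ on $\A^L$, and there are $(d-1)/2$ such points. For a multiarrangement on $\PP^1$ with $s$ points, constant multiplicity $2$, and $s\ge3$, the exponents are balanced: $d_1^L=d_2^L=s$. This is the classical result of Wakamiko on exponents of multiarrangements of rank two with constant multiplicity, namely that for constant multiplicity $k$ and $s\ge3$ points the exponents differ by at most one. The hypothesis $d\ge5$ gives $s=(d-1)/2\ge2$. For $s=2$ the exponents are $(2,2)$ directly, since the multiarrangement is $x^2y^2$, with basis $x\partial_x$-type derivations of degree $2$. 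Hence $r(\A)\ge(d-1)/2$ in all cases. Finally, a counterexample has $r<(d-2)/2<(d-1)/2$ by \eqref{eq:C2}, and $d\ge11$ by Proposition~\ref{P:basic}, so no line of $\A$ can carry only triple points. The main obstacle I expect is citing the balancedness of constant multiplicity $2$ correctly. As a fallback, I would argue directly: a derivation of degree $e<s$ on $\PP^1$ tangent to $\prod\ell_p^2$ would make $g$ divide a nonzero combination of degree $e+2s-1$, and this contradicts the fact that $\prod\ell_p$ is squarefree with $s\ge2$ factors.
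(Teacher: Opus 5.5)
Your route is the same as the paper's. You identify $D_0(f)$ with $D_L(\A)=\{\theta\in D(\A):\theta(\alpha)=0\}$ and apply Ziegler's map $\pi\colon D_L(\A)\to D(\A^L,m^L)$, whose kernel is $\alpha D_L(\A)$. A nonzero relation of degree $r(\A)$ cannot lie in that kernel, and Wakamiko finishes the argument. Your separate check of the case $|\A^L|=2$ is, if anything, more careful than the paper, which invokes Wakamiko already for $|\A^L|\ge2$.

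The one thing to fix is how you justify the step you yourself flagged.

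\emph{The cited sequences are the wrong ones.} The sequence quoted from \cite[Theorem 1.8]{DimcaDer} and the left part of \eqref{eq:AD} are deletion sequences. Their first term is $D_0(f_\B)$ with $\B=\A\setminus\{L\}$, and their third term is not the multiderivation module. So neither gives a map to $D(\A^L,m^L)$ with kernel $\alpha D_0(f_\A)$.

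\emph{Literal restriction does not land in $D(\A^L,m^L)$.} Take $f=xyz(x-z)$ and $L=\{z=0\}$. The relation $-\tfrac14x\partial_x+\tfrac34y\partial_y-\tfrac14z\partial_z$ restricts to a derivation sending $x$ to $-\tfrac14x\notin x^2\,\C[x,y]$, although the point $(0:1:0)$ has $m^L=2$.

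\emph{What is needed is exactly $\theta(\alpha)=0$.} Write the other lines through $p\in\A^L$ as $\alpha_{H_i}=\alpha_{H_1}+c_i\alpha$. Then $\theta(\alpha_{H_1})=\theta(\alpha_{H_i})$ for all $i$, so $\theta(\alpha_{H_1})\in\prod_i\alpha_{H_i}S$. Its restriction to $L$ is therefore divisible by $\ell_p^{m_p-1}$.

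\emph{The fix} is to argue as the paper does. If $\theta(\alpha)=g\alpha$, then $\theta-g\theta_E\in D_L(\A)$. This gives degree-preserving $S$-isomorphisms $D_0(f)\cong D(\A)/S\theta_E\cong D_L(\A)$. Moreover $\ker\pi=\alpha D_L(\A)$, because $\alpha$ is coprime to the other linear forms.

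Finally, your fallback for the Wakamiko step does not work as stated. Every $\theta\in D(\A^L,m^L)$ satisfies $g\mid\theta(g)$, so that divisibility yields no contradiction. It is not needed, since your citation of Wakamiko (balanced exponents $(s,s)$ for constant multiplicity $2$ and $s\ge3$) is correct.
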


\begin{proof}
Let $\alpha_L$ be a linear form defining $L$, let $D(\A)$ be the module of
logarithmic derivations of $\A$, $\theta_E$ the Euler derivation, and
$D_L(\A)=\{\theta\in D(\A)\ :\ \theta(\alpha_L)=0\}$. For $\theta\in D(\A)$ we
have $\theta(\alpha_L)=g\alpha_L$ for some $g\in S$, and
$\theta-g\theta_E\in D_L(\A)$, so $D(\A)=S\theta_E\oplus D_L(\A)$. In the same
way $D(\A)=S\theta_E\oplus D_0(f)$, where $D_0(f)$ is viewed as the module of
derivations annihilating $f$. Hence $D_L(\A)\cong D(\A)/S\theta_E\cong D_0(f)$
as graded $S$-modules, and the initial degree of $D_L(\A)$ is $r(\A)$.

By \cite{ZieMulti}, reducing the coefficients modulo $\alpha_L$ defines a
degree preserving $S$-linear map
\[
\pi\colon D_L(\A)\longrightarrow D(\A^L,m^L),
\]
where $D(\A^L,m^L)$ is a free module over $S/\alpha_LS$ with a basis of degrees
$d_1^L,d_2^L$. If $\pi(\theta)=0$, then $\theta=\alpha_L\theta'$ for a
derivation $\theta'$, and $\theta'\in D_L(\A)$, since $\alpha_L$ is prime to
the linear forms of the other lines of $\A$. Thus $\ker\pi=\alpha_LD_L(\A)$.
Now let $0\ne\theta\in D_L(\A)_{r(\A)}$. As $D_L(\A)_{r(\A)-1}=0$, we have
$\theta\notin\ker\pi$, so $\pi(\theta)$ is a nonzero element of degree $r(\A)$
of $D(\A^L,m^L)$, and therefore $r(\A)\ge d_1^L$.

If every singular point on $L$ is a triple point, then $|\A^L|=(d-1)/2\ge2$ and
$m^L\equiv2$, so $(\A^L,m^L)$ has exponents $\bigl((d-1)/2,(d-1)/2\bigr)$ by
\cite{Wakamiko}, see also \cite[Proposition 1.23]{Yoshinaga}. Hence
$r(\A)\ge(d-1)/2$, which is incompatible with $r(\A)<(d-2)/2$.
\end{proof}

\section{Degree eleven}
\label{sec:eleven}

\begin{proposition}\label{P:eleven}
Let $\A,\A'$ be a counterexample with $d=11$ and $m=4$. Then $\A$ is POG with
exponents $(4,7,9)$, and $\A'$ is either a minimal POG arrangement with
exponents $(5,6,7)$ or a maximal Tjurina arrangement of type $(11,6)$, with
exponents $(6,6,6,6)$. In particular, one has $\tau(\A)=\tau(\A')=73$. Moreover,
$r_L\le5$ for every line $L\in\A$.
\end{proposition}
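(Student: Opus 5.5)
The first step is to pin down the numerical invariants. With $d=11$ and $m=4$, Proposition \ref{P:basic} gives $r=\floor*{(d-3)/2}=4=m$. Since $d=11\ge 2m+1$, Lemma \ref{lem:rm} then says that $\A$ is either free with exponents $(4,6)$ or POG with exponents $(4,7,d_3)$, where $7\le d_3\le 9$. The free case is excluded exactly as in Proposition \ref{P:basic}. In the POG case, \eqref{eq:d3} gives
\[
\tau(\A)=100-24-(d_3-6)=82-d_3 .
\]

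Next I bound $\A'$ from the other side. Since $r'\ge5=(d-1)/2$, \eqref{eq:taunu} with $s=5$ gives $\tau(\A')\le\tau(11,5)_{\max}-\nu(\A')=75-\nu(\A')$. Because $r'\ge (d-2)/2$, formula \eqref{eq:nu} even gives the equality $\nu(\A')=75-\tau(\A')$.

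To bound $\nu(\A')$ from below, I use the known results in low degree. Terao's Conjecture holds for $d=11$ \cite{BK}, and so does its nearly free analogue \cite{DIM2}. If $\A'$ were free or nearly free, then $\A$ would be free or nearly free with the same exponents, contradicting $r<r'$. Hence $\nu(\A')\ge2$.

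Combining these gives $82-d_3=\tau(\A)=\tau(\A')=75-\nu(\A')\le 73$, so $d_3\ge 9$. Since $d_3\le 9$, this forces $d_3=9$, $\tau(\A)=\tau(\A')=73$ and $\nu(\A')=2$. So $\A$ is POG with exponents $(4,7,9)$.

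To identify $\A'$, I split according to $r'$.
\begin{enumerate}[label=\textnormal{(\alph*)}]
\item If $r'\ge7$, then \eqref{eq:dpwp} gives $\tau(\A')\le\tau(11,7)'_{\max}=79-10=69<73$, which is impossible.
\item If $r'=6$, then $\tau(11,6)'_{\max}=76-3=73=\tau(\A')$. Equality in \eqref{eq:dpwp} means $\A'$ is maximal Tjurina of type $(11,6)$, and \cite{maxTjurina} supplies the generator degrees $(6,6,6,6)$.
\item If $r'=5$, then $\tau(\A')=\tau(11,5)_{\max}-2$. Here I would invoke the structure results for curves with $r=(d-1)/2$ and Tjurina defect $2$ from \cite{3syz} (and the related minimal POG literature) to conclude that $\A'$ is POG with $d_1=5$, $d_2=d-r=6$ and $d_3$ determined by \eqref{eq:d3}, namely the minimal POG type $(5,6,7)$.
\end{enumerate}
I expect case (c) to be the main obstacle. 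The formula $\nu=d_3-d_2+1$ recalled above is stated for the unbalanced setting, so one must check carefully which characterization applies in the balanced range $r=(d-1)/2$. If no clean citation exists, I would try to argue directly via the syzygy degrees of $D_0(f')$, using $\nu(\A')=2$.

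Finally, for the bound $r_L\le5$ I use the deletion lemma, Lemma \ref{lem:del}. Fix $L\in\A$ and set $\B=\A\setminus\{L\}$, so that $r(\B)\in\{3,4\}$. If $r(\B)=3$, then $\B,\B'$ would be a counterexample of degree $10$ with $m(\B)\le4$. But Proposition \ref{P:basic} applied in degree $10$ requires $m(\B)\ge4$, and then $m(\B)=4$ would force $r(\B)=\floor*{7/2}=3<4=m(\B)\le r(\B)$, a contradiction. This argument uses no circular appeal to the present result. Hence $r(\B)=4$, and part (i) of Lemma \ref{lem:del} gives $r_L\le r+1=5$.
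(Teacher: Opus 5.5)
Your proposal is correct and follows the paper's proof essentially step by step: POG structure of $\A$ via Lemma \ref{lem:rm} with $\tau(\A)=82-d_3$, then $\nu(\A')\ge2$ from \cite{BK} and \cite{DIM2} forcing $d_3=9$ and $\tau=73$, then the split on $r'$, and finally the deletion lemma combined with the absence of counterexamples in degree $10$. The only soft spot is case (c). The paper closes it exactly as you anticipate, by citing \cite[Theorem 1.5]{DimcaSticlaruMPOG}: a curve with $\tau=\tau(d,r')_{\max}-2$ is minimal POG with exponents $(r',d-r',d-r'+1)$. The paper applies this at $r'=(d-1)/2$, so neither an appeal to \cite{3syz} nor a separate syzygy argument is needed.
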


\begin{proof}
By Proposition \ref{P:basic} we have $r=4=m$, so by Lemma \ref{lem:rm} either
$\A$ is free, which is excluded exactly as in the proof of
Proposition \ref{P:basic} (or by Terao's Conjecture in degree $11$ \cite{BK}),
or $\A$ is POG with
exponents $(4,7,d_3)$ and $7\le d_3\le9$. By \eqref{eq:d3}, we have
$\tau(\A)=82-d_3$.

Since $r'\ge5$, and since $\A'$ is neither free nor nearly free by \cite{BK}
and \cite[Theorem 4.9]{DIM2}, \eqref{eq:taunu} with $s=5$ gives
$\tau(\A')\le\tau(11,5)_{\max}-2=73$. Hence $82-d_3\le73$, so $d_3=9$ and
$\tau(\A)=\tau(\A')=73$. If $r'=5$ then $\A'$ is a minimal POG arrangement with
exponents $(5,6,7)$ by \cite[Theorem 1.5]{DimcaSticlaruMPOG}. If $r'\ge6$ then
$\tau(\A')\le\tau(11,6)_{\max}'=73$ by \eqref{eq:dpwp}, with equality only for
$r'=6$ and $\A'$ being maximal Tjurina with exponents $(6,6,6,6)$. Since
$\tau(\A')=73$, this is exactly the case.

For the last assertion, note that there is no counterexample of degree $10$,
since $d\ge2m+3\ge11$ by Proposition \ref{P:basic}. Hence case (ii) of
Lemma \ref{lem:del} cannot occur, and case (i) gives $r_L\le r+1=5$ for every
line $L\in\A$.
\end{proof}

\begin{theorem}\label{thm:eleven}
There is no counterexample with $d=11$ and $m=4$.
\end{theorem}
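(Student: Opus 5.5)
The plan is to turn the conclusions of Proposition \ref{P:eleven} into a pure counting contradiction, using the incidence identity \eqref{eq:sumr}. No further homological input should be needed, since Proposition \ref{P:eleven} already pins down $\tau(\A)$ exactly and bounds $r_L$ on every line.

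Suppose $\A,\A'$ is a counterexample with $d=11$ and $m=4$. By Proposition \ref{P:eleven} we have $\tau(\A)=73$ and $r_L\le5$ for every line $L\in\A$. Summing over the $11$ lines gives the upper bound
\[
\sum_{L\in\A}r_L\le 55 .
\]

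On the other hand, \eqref{eq:sumr} with $d=11$ and $m=4$ gives
\[
\sum_{L\in\A}r_L=5\binom{11}{2}-3\tau(\A)+n_4=275-219+n_4=56+n_4 .
\]
Since $n_4\ge0$, this is at least $56$. That contradicts the upper bound of $55$, and the theorem follows.

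The only substantive step is therefore Proposition \ref{P:eleven}. The one thing I would double-check is its input to the bound $r_L\le 5$: the deletion lemma, Lemma \ref{lem:del}, applies only because there is no counterexample of degree $10$, which Proposition \ref{P:basic} guarantees through $d\ge 2m+3=11$. As a sanity check on the numerics, \eqref{eq:tauT} gives $73=55+n_3+3n_4$, so $n_3+3n_4=18$. This is consistent with the data and is not needed for the contradiction.
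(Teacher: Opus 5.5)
Your proof is correct and matches the paper's: both take $\tau(\A)=73$ and $r_L\le5$ from Proposition \ref{P:eleven} and contradict $\sum_L r_L\le55$ with the incidence count $\sum_L r_L=56+n_4$. The only cosmetic differences are that you read off $56+n_4$ directly from \eqref{eq:sumr}, whereas the paper solves \eqref{eq:nk} for $n_2,n_3$ first, and that you rightly note $n_4\ge0$ already suffices where the paper uses $n_4\ge1$.
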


\begin{proof}
Assume such a pair exists. By Proposition \ref{P:eleven}, $\tau(\A)=73$ and
$r_L\le5$ for every $L$. From \eqref{eq:nk} with $d=11$ and $m=4$,
\[
n_2+3n_3+6n_4=55,
\qquad
n_2+4n_3+9n_4=73,
\]
whence $n_3+3n_4=18$, so $n_3=18-3n_4$ and $n_2=1+3n_4$. Counting the
incidences between lines and singular points in two ways gives us
\[
\sum_{L\in\A}r_L=2n_2+3n_3+4n_4=2(1+3n_4)+3(18-3n_4)+4n_4=56+n_4 .
\]
Since $m=4$ we have $n_4\ge1$, so $\sum_Lr_L\ge57$, while $r_L\le5$ on each of
the eleven lines gives $\sum_Lr_L\le55$. This contradiction proves the theorem.
\end{proof}

\section{Thirteen lines}
\label{sec:thirteen}

We begin with the part of the argument that does not depend on $m$.

\begin{proposition}\label{P:thirteen}
Let $\A,\A'$ be a counterexample as in \eqref{eq:C2} with $d=13$. Then $r=5$,
$m\in\{4,5\}$, $\tau(\A)\le107$, and every line of $\A$ contains at most six
singular points of $\A$.
\end{proposition}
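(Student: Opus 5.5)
With $d=13$, Proposition \ref{P:basic} gives $4\le m\le r<11/2$, so $r\in\{4,5\}$ and $d\ge 2m+3$ forces $m\le5$. If $m=4$ then $r=\floor*{(13-3)/2}=5$. If $m=5$ then $r\ge m$ forces $r=5$. In the remaining case $r=4$ we would need $m=4$, which is excluded both by the formula $r=\floor*{(d-3)/2}$ and by Proposition \ref{P:rm}. Hence $r=5$ and $m\in\{4,5\}$.

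For the line bound we use the deletion lemma, Lemma \ref{lem:del}. Case (ii) would produce a counterexample $\B,\B'$ of degree $12$. Proposition \ref{P:rm} says there is no counterexample with $d\le12$, so case (ii) never occurs. Case (i) then gives $r_L\le r+1=6$ for every $L\in\A$.

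For the bound on $\tau(\A)$ we look at $\A'$, which has $r'\ge6=(d-1)/2$. By \eqref{eq:dpw} in the balanced case $r'=6$ (for $d$ odd, $\tau(d,r)_{\max}$ and $\tau'_{\max}$ agree at $r=(d-1)/2$) we get
\[
\tau(\A')\le\tau(13,6)_{\max}=144-36=108 .
\]
By the strict monotonicity recalled after \eqref{eq:dpwp}, larger $r'$ gives a smaller bound.

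To reach $107$ we must exclude $\tau(\A')=108$ with $r'=6$. Equality in $\tau(\A')\le\tau(13,6)_{\max}$ means $\nu(\A')=0$ by \eqref{eq:nu}, that is, $\A'$ is free with exponents $(6,6)$. Terao's Conjecture is known only up to $14$ lines in the cited work \cite{BK}, which covers $d=13$. Freeness of $\A'$ would therefore force $\A$ to be free with the same exponents, contradicting $r=5$. Hence $\tau(\A)=\tau(\A')\le107$.

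The main point to check carefully is this last use of Terao's Conjecture in degree $13$. It also covers the alternative of invoking \eqref{eq:taunu} with $s=6$ directly, which gives $\tau\le108-\nu(\A')$ with $\nu(\A')\ge1$.

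As an independent sanity check we can use Lemma \ref{lem:deltau}, which requires $\tau(\B)\le\tau(12,5)_{\max}-1=89$, combined with \eqref{eq:deleterL}:
\[
\tau(\A)\le 24-r_L+89\le113-r_L .
\]
This is weaker, so the freeness argument above is the one to rely on.
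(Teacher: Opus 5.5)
Your proof is correct and follows the paper's argument. Propositions \ref{P:basic} and \ref{P:rm} give $r=5$ and $m\in\{4,5\}$; Proposition \ref{P:rm} rules out case (ii) of Lemma \ref{lem:del}, so $r_L\le6$; and $\tau(\A')\le108-\nu(\A')$, which the paper takes directly from \eqref{eq:taunu} with $s=6$, combined with Terao's Conjecture in degree $13$, gives $\tau(\A)\le107$. One harmless slip in your side check: $\tau(12,5)_{\max}-1=121-30-1=90$, not $89$, so Lemma \ref{lem:deltau} gives $\tau(\A)\le114-r_L$ (matching the value $114-\tau(\A)$ in Remark \ref{rk:nolight}); this bound is still weaker and is not used.
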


\begin{proof}
By Propositions \ref{P:basic} and \ref{P:rm} we have $4\le m\le r<11/2$, and
since $r=m=4$ is impossible, $r=5$ and $m\in\{4,5\}$. Since $r'\ge6$,
\eqref{eq:taunu} with $s=6$ gives
$\tau(\A)=\tau(\A')\le\tau(13,6)_{\max}-\nu(\A')=108-\nu(\A')$. If
$\nu(\A')=0$, then $\A'$ is free, and Terao's Conjecture in degree $13$ \cite{BK} makes $\A$ free, which is impossible since $\nu(\A)\ne\nu(\A')$.
Hence $\nu(\A')\ge1$ and $\tau(\A)\le107$.

By Proposition \ref{P:rm} there is no counterexample with $12$ lines, so case
(ii) of Lemma \ref{lem:del} cannot occur, and case (i) gives $r_L\le r+1=6$
for every line $L\in\A$.
\end{proof}

\subsection{The case \texorpdfstring{$m=4$}{m=4}}
Throughout this subsection and the next one, $\A,\A'$ is a counterexample to
Conjecture \ref{C1} with
\[
d=13,\qquad m=4,\qquad r=5 .
\]
From \eqref{eq:nk} we get
\begin{equation}\label{eq:n234}
n_3=\tau(\A)-78-3n_4,
\qquad \text{ and } \qquad
n_2=312-3\tau(\A)+3n_4=3\bigl(n_4-\tau(\A)+104\bigr).
\end{equation}
For a line $L\in\A$ we set
\[
D_L=n_3^L+2n_4^L ,
\]
so that the relation $n_2^L+2n_3^L+3n_4^L=12$ of \eqref{eq:delete} becomes
\begin{equation}\label{eq:nL}
n_2^L=12-2D_L+n_4^L ,
\qquad
n_3^L=D_L-2n_4^L\ \ge\ 0 ,
\end{equation}
and $r_L=12-D_L$. By Proposition \ref{P:thirteen} we have $r_L\le6$, that is
$D_L\ge6$, and then \eqref{eq:nL} gives
\begin{equation}\label{eq:heavy}
n_2^L=n_4^L+12-2D_L\ \le\ n_4^L\ \le\ 4 ,
\end{equation}
the last inequality because $3n_4^L\le12$. In particular $6\le D_L\le8$ for
every line. Finally, summing $D_L$ over all lines, we get
\begin{equation}\label{eq:sumD}
\sum_{L\in\A}D_L=3n_3+8n_4 .
\end{equation}

\begin{corollary}\label{cor:thmB}
One has $105\le\tau(\A)\le107$ and $3\le n_4\le3\tau(\A)-312$. Hence
$(\tau(\A),n_4)$ is $(105,3)$, or $(106,n_4)$ with $3\le n_4\le6$, or
$(107,n_4)$ with $3\le n_4\le9$, and every line of $\A$ satisfies
$6\le D_L\le8$. Moreover, no two lines of $\A$ contain all the points of
multiplicity four.
\end{corollary}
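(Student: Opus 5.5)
Write $\tau=\tau(\A)$. The plan is to combine the per-line constraint $6\le D_L\le 8$ with the global counts \eqref{eq:n234} and \eqref{eq:sumD}.

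\textbf{Lower bound on $\tau$.} Summing $D_L\ge 6$ over the thirteen lines gives $3n_3+8n_4\ge 78$. Substituting $n_3=\tau-78-3n_4$ from \eqref{eq:n234} yields
\[
3\tau-234-n_4\ \ge\ 78,\qquad\text{that is}\qquad n_4\ \le\ 3\tau-312 .
\]
Since $m=4$ we have $n_4\ge1$, so $3\tau\ge313$ and $\tau\ge105$. The upper bound $\tau\le107$ is Proposition \ref{P:thirteen}. This gives the upper bound on $n_4$. The same inequality also follows from $n_2\ge0$ in \eqref{eq:n234} together with \eqref{eq:sumr} and $r_L\le 6$; either route works.

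\textbf{Lower bound $n_4\ge3$.} This is Lemma \ref{lem:twoline} with $d=13\ge12$ and $m=4\le5$, which also gives the final assertion about two lines. Hence $\tau=105$ forces $n_4\le 3$, so $n_4=3$. For $\tau=106$ we get $n_4\le 6$, and for $\tau=107$ we get $n_4\le 9$. These are exactly the listed pairs.

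\textbf{Per-line bounds.} The bounds $6\le D_L\le8$ were already derived in \eqref{eq:heavy}: $r_L\le6$ gives $D_L\ge6$, and $0\le n_2^L\le n_4^L+12-2D_L$ with $n_4^L\le4$ gives $D_L\le8$.

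The only step that needs care is the sum identity \eqref{eq:sumD}. Each triple point lies on $3$ lines and so contributes $3$; each quadruple point lies on $4$ lines, each contributing $2$, so it contributes $8$. Beyond checking this, the argument is bookkeeping with no real obstacle.
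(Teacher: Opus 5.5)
Your proof is correct and is essentially the paper's argument: since $r_L=12-D_L$, summing $D_L\ge6$ through \eqref{eq:sumD} and \eqref{eq:n234} is the same computation as the paper's use of \eqref{eq:sumr} with $r_L\le6$, namely $390-3\tau(\A)+n_4\le78$, and the remaining steps (Lemma \ref{lem:twoline}, Proposition \ref{P:thirteen}, \eqref{eq:heavy}) coincide. One small inaccuracy in your side remark: $n_2\ge0$ plays no role in the upper bound on $n_4$, since it gives the lower bound $n_4\ge\tau(\A)-104$, whereas \eqref{eq:sumr} with $r_L\le6$ suffices on its own.
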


\begin{proof}
The last statement and $n_4\ge3$ are Lemma \ref{lem:twoline}, and
$\tau(\A)\le107$ is Proposition \ref{P:thirteen}. By \eqref{eq:sumr} and
Proposition \ref{P:thirteen},
\[
390-3\tau(\A)+n_4=\sum_{L\in\A}r_L\le 6\cdot13=78,
\]
that is $n_4\le3\tau(\A)-312$. Together with $n_4\ge3$ this gives
$\tau(\A)\ge104+n_4/3\ge105$. The bounds on $D_L$ were noted above.
\end{proof}

Proposition \ref{P:thirteen} and Corollary \ref{cor:thmB} prove
Theorem \ref{thmB}.

\begin{remark}\label{rk:nolight}
It is instructive to analyse directly a line $L$ with
$r(\A\setminus\{L\})=4$. Using Lemmas \ref{lem:rm} and \ref{lem:nofree},
\cite{AIM,DIM2} and Terao's Conjecture in degree $12$, one finds that
$\B=\A\setminus\{L\}$ is POG with exponents $(4,8,10)$, so that $\tau(\B)=90$,
$\nu(\B)=3$ and, by \eqref{eq:deleterL}, $r_L=114-\tau(\A)\ge7$. On the other
hand $r(\B')\ge5=(12-2)/2$ gives $\nu(\B')=\ceil{3\cdot11^2/4}-90=1$ by
\eqref{eq:nu}. Thus $\B,\B'$ would be a counterexample in degree $12$, as
predicted by Lemma \ref{lem:del}, which is impossible by
Proposition \ref{P:rm}. The arrangements of Section \ref{sec:extremal}
show that lines with $r_L=114-\tau(\A)$ do occur in realizable lattices with
$d=13$, $m=4$ and $\tau\in\{106,107\}$.
\end{remark}

\subsection{The classification}
\label{sec:104}

By Corollary \ref{cor:thmB}, Theorem \ref{corB} follows from the next result.

\begin{theorem}\label{thm:class}
Let $\LL$ be the intersection lattice of an arrangement of thirteen lines with
$m=4$ such that $(\tau,n_4)$ is one of the pairs of Corollary \ref{cor:thmB},
every line satisfies $6\le D_L\le8$, and no two lines contain all the points of
multiplicity four. Then $\LL$ is not realizable over $\C$.
\end{theorem}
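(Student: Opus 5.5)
We would prove Theorem \ref{thm:class} by a two-stage computer-assisted classification: first enumerate all combinatorial candidates for $\LL$ up to isomorphism, then test each one for realizability over $\C$ with Gröbner-basis computations.

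\textbf{Enumeration.} A lattice of $13$ lines with $m=4$ is determined by its set of \emph{blocks}, the subsets of lines through a point of multiplicity $3$ or $4$. This set is a partial linear space on $13$ points: two blocks meet in at most one line, and nodes fill the remaining pairs. By \eqref{eq:n234}, each admissible pair $(\tau,n_4)$ fixes $n_3=\tau-78-3n_4$ and $n_2$. The line condition $6\le D_L\le 8$ with $n_2^L\le n_4^L$ from \eqref{eq:heavy} strongly limits how many $3$- and $4$-blocks can contain a given line.

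We would enumerate in two steps.
\begin{enumerate}[label=\textnormal{(\alph*)}]
\item Place the $n_4\le 9$ four-element blocks up to the action of $S_{13}$. There are few such configurations, and we discard those covered by two lines, as Lemma \ref{lem:twoline} requires.
\item Extend each configuration by triples, using canonical augmentation (orderly generation) so that each isomorphism class is produced once. Prune any branch where some line can no longer reach $D_L\ge6$, or where $D_L$ would exceed $8$.
\end{enumerate}

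At the end we check the counts of \eqref{eq:nk}, \eqref{eq:sumr} and \eqref{eq:sumD}. The last one gives $6\cdot13\le 3n_3+8n_4\le 8\cdot 13$, a quick consistency filter.

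\textbf{Combinatorial pruning.} Before any algebra we discard lattices that fail standard necessary conditions for complex realizability. These are the Hirzebruch-type inequality \eqref{eq:hirz} and, more usefully, the exclusion of forbidden sub-configurations. The main one is the Fano plane, which is not realizable over characteristic zero; we also check the Hesse and Möbius–Kantor configurations, which are realizable only with specific non-real coordinates that the remaining incidences may then contradict. Lemma \ref{lem:ziegler} gives a further filter that does not require realizability: no line may carry only triple points.

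\textbf{Algebraic test.} For each surviving lattice, choose four lines in general position, meaning no three of them share a block. Move them to $x=0$, $y=0$, $z=0$, $x+y+z=0$. The remaining lines get affine coordinates, and every block of size $k$ gives the vanishing of $3\times3$ minors. Non-coincidences are imposed by a Rabinowitsch variable inverting the product of the relevant $2\times2$ and $3\times3$ minors, which also rules out extra incidences that would change $\LL$. We then compute a Gröbner basis over $\Q$ and show that it is $\{1\}$.

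It helps to order the lines so that the heavy lines, with $n_4^L$ large, are coordinatized first. Their blocks then force many coordinates linearly and the systems stay small.

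\textbf{Main obstacle.} The hardest step is the enumeration for $(107,n_4)$ with small $n_4$. There $n_3$ is largest, around $20$, and the number of partial linear spaces explodes. We would first try to exploit the local structure more sharply: lines with $n_2^L=0$ are forced to have $D_L=6$ exactly with a prescribed pattern of $3$- and $4$-blocks. Fixing such a line first and branching on its blocks should cut the search space drastically.

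If some lattices survive where the Gröbner basis is not $\{1\}$, we would decompose the ideal and check that every component forces an additional concurrence. Such a concurrence would give a different lattice, so these components are not realizations of $\LL$.
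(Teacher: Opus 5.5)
Your plan is in substance the paper's own proof. The paper also encodes $\LL$ by its blocks (dually, thirteen points with blocks of sizes three and four). It classifies the families of $4$-blocks first and discards those met by two points. It then adds the $3$-blocks with the $4$-blocks held fixed and reduces modulo isomorphism. Finally it fixes a frame of four elements, no three in a block, and shows the incidence ideal is the unit ideal. Your Rabinowitsch saturation is a legitimate, if heavier, variant: the paper does not saturate at all, and its incidence ideals already turn out to be the unit ideal in every class.

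There is, however, one step that is not valid for this statement: the filter based on Lemma \ref{lem:ziegler}. That lemma says that an arrangement \emph{realizing} a lattice with a line carrying only triple points has $r\ge(d-1)/2=6$. It gives no obstruction to realizability, and Theorem \ref{thm:class} has no hypothesis on $r$. Discarding such lattices only shows they are not realized with $r=5$. That suffices for Theorem \ref{corB}, but it is not the non-realizability claimed here.

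The filter is far from vacuous. Take $(\tau,n_4)=(105,3)$, i.e.\ $(n_2,n_3,n_4)=(6,18,3)$. The two-line hypothesis forces each line to contain at most one quadruple point, so the three pencils through them are disjoint and cover twelve lines. The thirteenth line has $n_4^L=0$, $D_L=n_3^L\ge6$ and $2n_3^L\le12$, hence carries exactly six triple points and no other singular point. Your procedure would therefore throw away every lattice of this type untested. The paper finds two isomorphism classes there and has to check that both have unit realization ideal. Drop this filter, or use it only when proving Theorem \ref{corB}, and run the algebraic test on these lattices too.

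Some smaller points:
\begin{itemize}
\item As you note yourself, the Hesse and M\"obius--Kantor configurations are realizable over $\C$, so they are not obstructions and cannot serve as filters.
\item The inequality \eqref{eq:hirz} prunes nothing here, since $n_2+\tfrac34n_3\ge15>13$ for every admissible $(n_2,n_3,n_4)$.
\item By \eqref{eq:nL}, $n_2^L=0$ means $D_L=6+n_4^L/2\in\{6,7,8\}$, not $D_L=6$, so your suggested branching heuristic rests on a false premise.
\item The affine chart of each line must be dictated by the lattice; for instance, the lines through $\{x=0\}\cap\{y=0\}$ have vanishing $z$-coefficient. A unit ideal only excludes realizations inside the charts used; the paper makes the corresponding point for its parameters $A+sB$.
\item Once you have inverted all non-incidence minors, a non-unit ideal would give a genuine realization. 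So your last fallback, components forcing extra concurrences, cannot arise: such an outcome would simply contradict the theorem.
\end{itemize}
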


The proof is computer assisted -- all computations can be verified with the
programs and data in \cite{Repo}. We first reformulate the problem in dual
terms, then describe the reduction, the enumeration and the realizability
test, and finally record the outcome.

\subsection*{The dual formulation}
It is convenient to dualize the problem, using the language of abstract
configurations. The thirteen lines of $\A$ become thirteen points of $\PP^2$,
and a point of multiplicity $k$ becomes a line containing exactly $k$ of them,
that is a \emph{block} of size $k$. As $m=4$, only blocks of sizes $3$ and $4$
occur. Two blocks meet in at most one point, and two points lie on at most one
common block. For the point $p$ dual to a line $L$ we write
\[
t_p=n_3^L,\qquad q_p=n_4^L,\qquad e_p=n_2^L=12-2t_p-3q_p,\qquad
D_p=D_L=t_p+2q_p ,
\]
so that every point satisfies $6\le D_p\le8$ and $0\le e_p\le q_p\le4$ by
\eqref{eq:heavy}. By \eqref{eq:n234}, $n_3=\tau-78-3n_4$ and
$n_2=312-3\tau+3n_4$.

\subsection*{Numerical reduction}
\label{sub:num}
We enumerate the multisets of profiles $(t_p,q_p)$, $0\le p\le12$, compatible
with
\[
\sum_pt_p=3n_3,\qquad \sum_pq_p=4n_4,\qquad 6\le t_p+2q_p\le8,
\qquad 2t_p+3q_p\le12,
\]
and with the three counting conditions
\begin{equation}\label{eq:count}
\sum_p\binom{q_p}{2}\le\binom{n_4}{2},
\qquad
\sum_p\binom{t_p}{2}\le\binom{n_3}{2},
\qquad
q_{(1)}+q_{(2)}\le n_4+1,
\end{equation}
where $q_{(1)}\ge q_{(2)}$ are the two largest values of $q_p$. The first two
express that two blocks meet in at most one point. The third expresses that two
points lie on at most one common block: if $q_p+q_{p'}>n_4+1$ the blocks
through $p$ and those through $p'$ would share at least two members. The
relation $\sum_pe_p=2n_2$ is then automatic. Finally, Lemma
\ref{lem:twoline} constrains the blocks of size four through the following
elementary observation.

\begin{lemma}\label{lem:qbound}
For every point $p$ one has $q_p\le n_4-2$.
\end{lemma}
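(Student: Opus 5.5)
We prove the bound $q_p\le n_4-2$ by contradiction, using the last statement of Lemma \ref{lem:twoline} in the form recorded in Corollary \ref{cor:thmB}: no two lines of $\A$ contain all the points of multiplicity four. In the dual language, a point $p$ corresponds to a line $L$ of $\A$, and $q_p=n_4^L$ is the number of quadruple points of $\A$ lying on $L$. So it suffices to show that if some line $L$ carries at least $n_4-1$ of the $n_4$ quadruple points, then two lines of $\A$ cover all of them.

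Suppose $q_p\ge n_4-1$ for the point $p$ dual to $L$. There are two cases.

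\emph{Case $q_p=n_4$.} Every quadruple point already lies on $L$. We take $L_1=L$ and let $L_2$ be any other line of $\A$. Then every point of multiplicity four lies on $L_1\cup L_2$. Since $m=4$, there are no points of multiplicity five, so the second condition of Lemma \ref{lem:twoline} is vacuous.

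\emph{Case $q_p=n_4-1$.} Exactly one quadruple point $x$ lies off $L$. We take $L_1=L$ and let $L_2$ be any of the four lines of $\A$ through $x$. Then all quadruple points lie on $L_1\cup L_2$. Note that $L_2\ne L_1$, because $x\notin L$.

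In both cases we obtain two lines of $\A$ containing all points of multiplicity four. This contradicts Lemma \ref{lem:twoline}, which applies because $d=13\ge12$ and $m=4\le5$. Hence $q_p\le n_4-2$ for every $p$.

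I expect no real obstacle here. The only care needed is in matching the dual notation $q_p$ with $n_4^L$, and in checking that the hypotheses $d\ge12$, $m\le5$ of Lemma \ref{lem:twoline} hold. In the enumeration, this bound then sharpens the constraint $q_{(1)}+q_{(2)}\le n_4+1$ of \eqref{eq:count}.
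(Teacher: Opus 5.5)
Your proof is correct and is essentially the paper's argument: a line with $q_p\ge n_4-1$ misses at most one quadruple point, and adding any line through that point (or any other line, if none is missed) yields two lines covering all quadruple points, contradicting the two-line condition. One small point of phrasing: in Theorem \ref{thm:class}, $\LL$ is an abstract lattice, so what is contradicted is that theorem's hypothesis; Lemma \ref{lem:twoline} is what guarantees this hypothesis for $\LL(\A)$, exactly as the paper puts it.
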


\begin{proof}
If $q_p\ge n_4-1$, the line dual to $p$ carries all but at most one of the
points of multiplicity four. Together with any line of $\A$ through the
remaining one, it carries all of them, contradicting the hypothesis of
Theorem \ref{thm:class}, which holds for $\LL(\A)$ by
Lemma \ref{lem:twoline}.
\end{proof}

The numbers of profile multisets satisfying all these conditions are listed in
Table \ref{tab:extremal}.

\subsection*{The enumeration}
\label{sub:method}
The search proceeds in two stages, the blocks of size four first.

For the first stage we do not search at all. A family of $n_4$ blocks of size
four on thirteen points, pairwise meeting in at most one point, is determined
up to isomorphism by the family of subsets of the block set cut out by the
points lying on two or more blocks. Indeed, distinct points give distinct
subsets, two subsets meet in at most one block, each block contains at most
four of these points, and the remaining points of a block are private to it.
Enumerating such families of subsets of an $n_4$-element set with prescribed
sizes and reducing them modulo the symmetric group is a small computation,
and it returns the isomorphism classes directly rather than the very large
number of their labelled copies. Those classes on which two points meet every
block are discarded, by Lemma \ref{lem:twoline}.

For the second stage we fix a class from the first, assign the triple degrees
$t_p$ to the labels in all ways compatible with a given profile multiset, and
search for the blocks of size three with the blocks of size four held fixed.
Blocks are generated at their lexicographically smallest pair, so that every
labelled solution is produced exactly once, and the prescribed profiles make
the pruning tight. The solutions are then reduced modulo isomorphism by a
canonical form of the coloured incidence graph. No search was capped.

\subsection*{The realizability test}
Each class is then tested for realizability. We fix a projective frame on four
points no three of which lie in a common block, and propagate the incidences: a
line is determined as soon as two of its points are known, and a point as soon
as it lies on two known lines. When the propagation stops, an unknown point
$p$ lying on a known line through two already placed points $A$ and $B$ is
written as $A+sB$ with a new parameter $s$. This chart misses only the point
$B$, which is distinct from $p$ in any realization, so no realization is lost.
Points lying on no known line are never parametrized: if such points remain,
another frame is used. Every incidence not consumed by the propagation becomes
an equation, which is a necessary condition. Consequently, if the resulting
ideal is the unit ideal, the class is not realizable over a field of
characteristic zero. If the ideal is proper and zero-dimensional we solve it
exactly, over the relevant number field when its solutions are not rational,
construct the configuration explicitly, and verify that no point degenerates
to $(0:0:0)$, that no two points coincide, and that the recovered lattice is
the intended one. Only these two outcomes, a unit ideal and a zero-dimensional
ideal all of whose solutions fail one of these tests, are used as proofs of
non-realizability.

\subsection*{Outcome}
For $(n_2,n_3,n_4)=(6,18,3)$, Lemma \ref{lem:qbound} gives $q_p\le1$, so the
three blocks of size four are pairwise disjoint, and the profile multiset is
unique. The first stage has a single class, and the second stage produces
$41472$ labelled solutions forming two isomorphism classes, both with unit
realization ideal. The remaining cases are summarized in
Table \ref{tab:extremal} -- in each of them every class has unit realization
ideal.

The rows of Table \ref{tab:extremal} with no profile can be checked by hand.
For $(n_2,n_3,n_4)=(3,19,3)$ and $(0,20,3)$, Lemma \ref{lem:qbound} gives
$q_p\le1$, so twelve points have $(t_p,q_p)=(4,1)$ and one has $(6,0)$, whence
$\sum_pt_p=54\ne3n_3$. For $(18,2,9)$ one has $\sum_pD_p=3n_3+8n_4=78$, so
$D_p=6$ and $t_p=6-2q_p$ is even for every $p$. Then
$\sum_p\binom{t_p}{2}\le\binom22=1$ forces $\sum_pt_p\le2<6=3n_3$.

\begingroup
\small
\begin{longtable}{ccccc}
\caption{The lattices of Theorem \ref{thm:class}.}
\label{tab:extremal}\\
\toprule
$\tau$ & $(n_2,n_3,n_4)$ & profiles & classes & realizable\\
\midrule
\endfirsthead
\toprule
$\tau$ & $(n_2,n_3,n_4)$ & profiles & classes & realizable\\
\midrule
\endhead
\bottomrule
\endlastfoot
$105$ & $(6,18,3)$ & $1$ & $2$ & none\\
\midrule
$106$ & $(3,19,3)$ & $0$ & $0$ & ---\\
$106$ & $(6,16,4)$ & $4$ & $4$ & none\\
$106$ & $(9,13,5)$ & $16$ & $24$ & none\\
$106$ & $(12,10,6)$ & $10$ & $15$ & none\\
\midrule
$107$ & $(0,20,3)$ & $0$ & $0$ & ---\\
$107$ & $(3,17,4)$ & $2$ & $0$ & ---\\
$107$ & $(6,14,5)$ & $16$ & $29$ & none\\
$107$ & $(9,11,6)$ & $31$ & $157$ & none\\
$107$ & $(12,8,7)$ & $22$ & $46$ & none\\
$107$ & $(15,5,8)$ & $2$ & $3$ & none\\
$107$ & $(18,2,9)$ & $0$ & $0$ & ---\\
\end{longtable}
\endgroup

This proves Theorem \ref{thm:class}, and together with Theorem \ref{thmB} it
proves Theorem \ref{corB}: \textbf{there is no counterexample with} $d=13$ and
$m=4$.

\subsection{The case \texorpdfstring{$m=5$}{m=5}}
\label{sub:m5}
The theoretical part of the argument extends to $m=5$.

\begin{proposition}\label{P:m5}
Let $\A,\A'$ be a counterexample as in \eqref{eq:C2} with $d=13$ and $m=5$.
Then $r=5$, $\A$ is POG with exponents $(5,8,d_3)$ where $d_3\in\{9,10\}$,
$\tau(\A)=116-d_3\in\{106,107\}$, every line of $\A$ contains at most six
singular points of $\A$, and
\[
n_4+3n_5\ \le\ 3\tau(\A)-312 .
\]
More precisely, $(n_2,n_3,n_4,n_5)$ is one of the four types
\[
(5,19,1,1),\quad (8,16,2,1),\quad (11,13,3,1),\quad (10,16,0,2)
\]
if $\tau(\A)=106$, or one of the eleven types
\[
\begin{gathered}
(2,20,1,1),\ (5,17,2,1),\ (8,14,3,1),\ (11,11,4,1),\ (14,8,5,1),\ (17,5,6,1),\\
(7,17,0,2),\ (10,14,1,2),\ (13,11,2,2),\ (16,8,3,2),\ (15,11,0,3)
\end{gathered}
\]
if $\tau(\A)=107$.
\end{proposition}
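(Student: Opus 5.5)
The plan is to combine Proposition \ref{P:thirteen} with Lemma \ref{lem:rm}, then finish with the incidence count \eqref{eq:sumr} and Lemma \ref{lem:twoline}.

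\emph{Step 1: $r$ and the line bound.} By Proposition \ref{P:thirteen}, $r=5$, $\tau(\A)\le107$, and $r_L\le6$ for every line $L\in\A$. This already gives the statement about singular points on lines.

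\emph{Step 2: the POG structure.} Since $r=m=5$, $m\ge3$ and $d=13\ge2m+1$, Lemma \ref{lem:rm} applies. So $\A$ is either free with exponents $(5,7)$ or POG with exponents $(5,8,d_3)$, where $8\le d_3\le 11$. The free case is excluded as in the proof of Proposition \ref{P:basic}: freeness gives $\tau(\A)=\tau(13,5)_{\max}$, while $r'\ge6$ gives $\tau(\A')\le\tau(13,6)_{\max}<\tau(13,5)_{\max}$. Alternatively, Terao's Conjecture in degree $13$ \cite{BK} rules it out. In the POG case, \eqref{eq:d3} gives
\[
\tau(\A)=144-35-(d_3-7)=116-d_3 .
\]
The bound $\tau(\A)\le107$ then forces $d_3\ge9$. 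So it remains to exclude $d_3=11$, i.e. $\tau(\A)=105$.

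\emph{Step 3: the key inequality.} By \eqref{eq:sumr} with $m=5$,
\[
\sum_L r_L=5\binom{13}{2}-3\tau(\A)+n_4+3n_5 .
\]
Combining this with $r_L\le 6$ on all thirteen lines gives $390-3\tau(\A)+n_4+3n_5\le 78$, that is
\[
n_4+3n_5\le 3\tau(\A)-312 .
\]
If $\tau(\A)=105$, the right-hand side is $3$. Since $n_5\ge1$, this forces $(n_4,n_5)=(0,1)$, which Lemma \ref{lem:twoline} forbids (here $d\ge12$ and $m\le5$). Hence $d_3\in\{9,10\}$ and $\tau(\A)\in\{106,107\}$.

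\emph{Step 4: the list of types.} Subtracting the two equations of \eqref{eq:nk}, namely
\[
n_2+3n_3+6n_4+10n_5=78,\qquad n_2+4n_3+9n_4+16n_5=\tau ,
\]
expresses $n_3$ and then $n_2$ in terms of $(n_4,n_5)$:
\[
n_3=\tau-78-3n_4-6n_5,\qquad n_2=78-3n_3-6n_4-10n_5 .
\]
Next enumerate all pairs with $n_5\ge1$, $n_4\ge0$, $n_4+3n_5\le3\tau-312$ and $(n_4,n_5)\ne(0,1)$.

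For $\tau=106$ the bound is $n_4+3n_5\le 6$. This leaves $n_5=1$ with $n_4\in\{1,2,3\}$, and $(n_4,n_5)=(0,2)$: four types.

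For $\tau=107$ the bound is $n_4+3n_5\le 9$. This leaves $n_5=1$ with $1\le n_4\le6$, $n_5=2$ with $0\le n_4\le3$, and $(n_4,n_5)=(0,3)$: eleven types.

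It then remains to check that $n_2,n_3\ge0$ in each case and that the values match the displayed lists.

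\emph{Expected obstacle.} There is no serious obstacle. The only non-mechanical point is excluding $\tau=105$ in Step 3, which works only because of Lemma \ref{lem:twoline}.
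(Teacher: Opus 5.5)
Your proposal is correct and follows the paper's proof essentially step for step: Proposition \ref{P:thirteen}, then Lemma \ref{lem:rm} with \eqref{eq:d3}, then the incidence count \eqref{eq:sumr} together with Lemma \ref{lem:twoline} to exclude $\tau(\A)=105$, and finally the enumeration via \eqref{eq:nk}. The only cosmetic difference is that you obtain $\tau(\A)\ge105$ from $d_3\le11$, whereas the paper derives it from $n_5\ge1$ in the inequality $n_4+3n_5\le3\tau(\A)-312$.
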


\begin{proof}
By Proposition \ref{P:thirteen}, $r=5$, $\tau(\A)\le107$ and $r_L\le6$ for
every line. Since $r=m=5$ and $d=13\ge2m+1$, Lemma \ref{lem:rm} shows that
$\A$ is either free with exponents $(5,7)$, which is excluded exactly as in the
proof of Proposition \ref{P:basic}, or POG with exponents $(5,8,d_3)$, where
$8\le d_3\le11$, so that $\tau(\A)=116-d_3$ by \eqref{eq:d3}. By
\eqref{eq:sumr},
\[
390-3\tau(\A)+n_4+3n_5=\sum_{L\in\A}r_L\le78 ,
\]
which is the stated inequality. As $n_5\ge1$, it gives $\tau(\A)\ge105$, and
$\tau(\A)=105$ would force $(n_4,n_5)=(0,1)$, which is excluded by
Lemma \ref{lem:twoline}. Hence $\tau(\A)\in\{106,107\}$ and
$d_3\in\{10,9\}$. Finally, \eqref{eq:nk} and \eqref{eq:tauT} give
$n_3=\tau(\A)-78-3n_4-6n_5$ and $n_2=312-3\tau(\A)+3n_4+8n_5$, and the list
consists of the solutions with $n_2,n_3\ge0$, $n_5\ge1$,
$(n_4,n_5)\ne(0,1)$ and $n_4+3n_5\le3\tau(\A)-312$.
\end{proof}

\begin{corollary}\label{cor:m5nu}
Let $\A,\A'$ be a counterexample as in \eqref{eq:C2} with $d=13$ and $m=5$,
and let $d_3\in\{9,10\}$ be as in Proposition \ref{P:m5}. Then
$\nu(\A)=d_3-7$ and $\nu(\A')=d_3-8$, in particular $\nu(\A)=\nu(\A')+1$.
More precisely, the following hold.
\begin{enumerate}[label=\textnormal{(\roman*)}]
\item If $d_3=9$, that is $\tau(\A)=107$, then $\nu(\A)=2$, $r'=6$ and
$\nu(\A')=1$, so $\A'$ is nearly free with exponents $(6,7)$.
\item If $d_3=10$, that is $\tau(\A)=106$, then $\nu(\A)=3$, $\nu(\A')=2$, and
one of the following holds:
\begin{enumerate}[label=\textnormal{(\alph*)}]
\item $r'=6$ and $\A'$ is a minimal POG arrangement with exponents $(6,7,8)$;
\item $r'=7$ and $\A'$ is a maximal Tjurina arrangement of type $(13,7)$, with
exponents $(7,7,7,7)$.
\end{enumerate}
\end{enumerate}
\end{corollary}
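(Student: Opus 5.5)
We derive both values of $\nu$ from \eqref{eq:nu} and then identify $\A'$ from known classification results. The starting point is Proposition \ref{P:m5}: $r=5<(13-2)/2$ and $\tau(\A)=116-d_3$. The first case of \eqref{eq:nu} gives
\[
\nu(\A)=144-5\cdot7-\tau(\A)=109-116+d_3=d_3-7 .
\]
For $\A'$ we have $r'\ge6\ge(d-2)/2=11/2$, so the second case applies:
\[
\nu(\A')=\ceil{3\cdot144/4}-\tau(\A')=108-(116-d_3)=d_3-8 ,
\]
using $\tau(\A')=\tau(\A)$. This proves $\nu(\A)=\nu(\A')+1$, and specializing to $d_3=9$ and $d_3=10$ gives the numerical values in (i) and (ii).

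\emph{Case (i).} Here $\nu(\A')=1$, so $\A'$ is nearly free by \cite{DSnf}. A nearly free arrangement has exponents $(d_1,d_2)$ with $d_1+d_2=d=13$ and $d_1=r'$. The du Plessis--Wall bound \eqref{eq:dpwp} for $r'\ge 7$ gives
\[
\tau(\A')\le\tau(13,7)'_{\max}=144-35-\binom{3}{2}=106<107 ,
\]
so $r'\le 6$. Together with $r'\ge6$ this forces $r'=6$ and exponents $(6,7)$.

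\emph{Case (ii).} Here $\tau=106$ and $\nu(\A')=2$, and again $r'\ge6$. For $r'\ge8$, \eqref{eq:dpwp} gives $\tau(\A')\le 144-40-\binom52=94<106$, which is impossible, so $r'\in\{6,7\}$. If $r'=7$, then $\tau(\A')=106=\tau(13,7)'_{\max}$, so $\A'$ is maximal Tjurina of type $(13,7)$ by \cite{maxTjurina}, with exponents $(7,7,7,7)$ as in the degree $11$ analogue in Proposition \ref{P:eleven}. If $r'=6=(d-1)/2$, we invoke \cite[Theorem 1.5]{DimcaSticlaruMPOG} exactly as in Proposition \ref{P:eleven}. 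That result characterizes minimal POG arrangements with $d=2r'+1$ by $\tau=\tau(d,r')_{\max}-2$ (equivalently $\tau(13,6)_{\max}-\nu=108-2$), giving exponents $(6,7,8)$.

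The main obstacle is this last identification: we must check that the cited characterization applies at the boundary value $r'=(d-1)/2$ with $\tau=106$. If it does not apply directly, the fallback is to argue through the second syzygy degrees. Since $\nu(\A')=2$, the module $D_0$ has a generator in degree $6$ and another in degree $7$ forced by the Hilbert function, and \eqref{eq:d3} then gives $d_3=8$.
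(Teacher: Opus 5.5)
Your proposal is correct and follows the paper's proof essentially step by step. You compute both values of $\nu$ from \eqref{eq:nu} using $\tau(\A')=\tau(\A)=116-d_3$, pin down $r'$ with the du Plessis--Wall bounds, and identify $\A'$ via \cite{DSnf}, \cite[Theorem 1.5]{DimcaSticlaruMPOG} and \cite{maxTjurina}. The paper applies the minimal POG characterization at $r'=(d-1)/2$ without further comment, so your fallback through \eqref{eq:d3} is not needed; as stated it would not work anyway, since \eqref{eq:d3} presupposes that $\A'$ is POG.

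There is one harmless arithmetic slip: $\tau(13,8)'_{\max}=144-32-10=102$, not $94$. Also, the exponents $(7,7,7,7)$ should be justified by the general fact that a maximal Tjurina curve of type $(d,r')$ has $2r'-d+3$ minimal generators, all of degree $r'$, rather than by analogy with degree $11$.
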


\begin{proof}
By Proposition \ref{P:m5} we have $\tau(\A')=\tau(\A)=116-d_3$. Since
$r=5<11/2$, the first case of \eqref{eq:nu} gives
$\nu(\A)=\tau(13,5)_{\max}-\tau(\A)=109-\tau(\A)=d_3-7$, in accordance with
$\nu(\A)=d_3-d_2+1$. Since $r'\ge r+1=6>11/2$, the second case of
\eqref{eq:nu} gives $\nu(\A')=\ceil*{3\cdot12^2/4}-\tau(\A')=108-\tau(\A)=d_3-8$.

Next, $\tau(13,6)_{\max}=108$, $\tau(13,7)'_{\max}=109-\binom32=106$ and
$\tau(13,8)'_{\max}=112-\binom52=102$. As the du Plessis--Wall bound is strictly
decreasing in $r$, see Section \ref{sec:prelim}, \eqref{eq:dpwp} gives
$\tau(\A')\le106$ if $r'\ge7$ and $\tau(\A')\le102$ if $r'\ge8$. Hence $r'=6$
if $\tau(\A)=107$, and $r'\in\{6,7\}$ if $\tau(\A)=106$.

(i) Here $\nu(\A')=1$, so $\A'$ is nearly free \cite{Dfree,DSnf}, with
exponents $(r',d-r')=(6,7)$.

(ii) If $r'=6$, then $\tau(\A')=106=\tau(13,6)_{\max}-2$, and $\A'$ is a
minimal POG arrangement with exponents $(r',d-r',d-r'+1)=(6,7,8)$ by
\cite[Theorem 1.5]{DimcaSticlaruMPOG}. If $r'=7$, then $r'\ge d/2$ and
$\tau(\A')=106=\tau(13,7)'_{\max}$, so equality holds in \eqref{eq:dpwp} and
$\A'$ is a maximal Tjurina arrangement of type $(13,7)$ \cite{maxTjurina}. Its
module of Jacobian relations is then minimally generated by $2r'-d+3=4$
relations of degree $r'=7$, that is, $\A'$ has exponents $(7,7,7,7)$.
\end{proof}

We now show that seven of the fifteen numerical types of
Proposition \ref{P:m5} can be excluded by an elementary counting argument.

\begin{proposition}\label{P:m5bezout}
Let $\A,\A'$ be a counterexample as in \eqref{eq:C2} with $d=13$ and $m=5$.
Then every line of $\A$ passes through a point of multiplicity at least four,
and $(n_2,n_3,n_4,n_5)$ is none of the seven types
\[
(2,20,1,1),\ (5,19,1,1),\ (5,17,2,1),\ (10,16,0,2),\
(7,17,0,2),\ (10,14,1,2),\ (15,11,0,3).
\]
\end{proposition}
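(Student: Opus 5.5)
The plan is to first establish the covering statement by a local argument on a single line, and then exclude the seven types by counting lines and crossing pairs among the pencils through the points of multiplicity at least four.

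\emph{Step 1: every line meets a point of multiplicity $\ge4$.} Let $L\in\A$ contain only double and triple points of $\A$. By \eqref{eq:delete} we have $n_2^L+2n_3^L=12$, and by Proposition \ref{P:m5} we have $r_L=n_2^L+n_3^L\le6$. Subtracting gives $n_3^L\ge6$, hence $n_3^L=6$ and $n_2^L=0$, so $L$ carries only triple points of $\A$. Lemma \ref{lem:ziegler} then gives $r\ge6$, contradicting $r=5$.

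\emph{Step 2: the pencil cover.} Let $P_1,\dots,P_s$ be the points of multiplicity $4$ or $5$, and let $\Pi_i$ be the pencil of lines of $\A$ through $P_i$. By Step 1, $\bigcup\Pi_i=\A$, so $13\le\sum_i m_{P_i}=4n_4+5n_5$. This alone kills the types with $4n_4+5n_5<13$:
\begin{itemize}
\item $(2,20,1,1)$ and $(5,19,1,1)$ give the sum $9$;
\item $(10,16,0,2)$ and $(7,17,0,2)$ give the sum $10$.
\end{itemize}

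\emph{Step 3: the type $(5,17,2,1)$.} Here $5+4+4=13$, so the three pencils partition $\A$. Two lines of one pencil meet only at its centre. Hence every triple point of $\A$ other than the $P_i$ is made of one line from each pencil. In particular, each such triple point determines a pair consisting of one line of each $4$-pencil, and distinct triple points give distinct pairs, since two lines meet only once. This yields $n_3\le4\cdot4=16<17$, a contradiction.

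\emph{Step 4: the remaining types $(10,14,1,2)$ and $(15,11,0,3)$.} The same method applies, but now with overlaps.
\begin{itemize}
\item For $(10,14,1,2)$ the pencils have sizes $5,5,4$ with total $14$, so exactly one line is shared by two pencils, namely the line joining two of the centres. One distinguishes whether the shared line joins the two $5$-points or a $5$-point with the $4$-point.
\item For $(15,11,0,3)$ the pencils have sizes $5,5,5$ with total $15$, so there are two overlaps. Either two joining lines lie in $\A$, or one line passes through all three centres; a line through three $5$-points would have $r_L$ too small compared with $d-1=12$, which is easily checked or excluded via \eqref{eq:delete}.
\end{itemize}
In each configuration, every triple point off the centres uses three lines from pairwise distinct pencils, or else lies on a shared line. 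One then bounds $n_3$ by the number of admissible crossing pairs between two of the pencils with the shared lines removed, for instance $4\cdot4=16$ or fewer. Next, one refines using the exact count of cross pairs: the total number of pairs of lines is $\binom{13}{2}$, distributed among the centres, the triple points and the nodes. The goal is to reach $n_3>$ bound, or a mismatch in that pair count.

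\emph{Main obstacle.} I expect Step 4 to be the hardest part, in particular the type $(10,14,1,2)$, where the bound $n_3\le16$ does not immediately beat $14$. There I would first try the full pair count $\sum\binom{k}{2}n_k=78$ restricted to cross pairs between specific pencils, combined with $r_L\le6$ applied to the shared line and to the lines of the $4$-pencil. If necessary, I would also bring in \eqref{eq:hirz}.
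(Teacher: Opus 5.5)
Your Steps 1--3 coincide with the paper's proof. The gap is Step 4, which is only a plan, and the tools you propose do not close it.

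Your crossing-pair bound does settle the subcase $pq_2\in\A$ of $(10,14,1,2)$. There every triple point lies on one line of $\Pi_p\setminus\{pq_2\}$ and on one of $\Pi_{q_2}\setminus\{pq_2\}$, so $n_3\le3\cdot4=12<14$. This is a little quicker than the paper's count on the lines through $q_1$. The bound is too weak in the other cases, however:
\begin{itemize}
\item in the subcase $q_1q_2\in\A$ of $(10,14,1,2)$ it only gives $n_3\le16$;
\item for $(15,11,0,3)$ it gives $n_3\le16$ when the three centres are collinear, and $n_3\le3\cdot4=12$ when two joining lines lie in $\A$.
\end{itemize}
None of these contradicts $n_3=14$, resp.\ $n_3=11$.

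Your fallbacks cannot rescue this. The refined pair counts are consistent in all three situations. For $q_1q_2\in\A$, say, the nodes are the $2+2+2$ unused crossing pairs plus the four intersection points of $q_1q_2$ with the lines through $p$, which is exactly $n_2=10$. Moreover, \eqref{eq:hirz} holds for both types. Finally, a line $\ell$ through three points of multiplicity five has $r_\ell=3$. This is compatible with \eqref{eq:delete}, so being ``too small compared with $d-1$'' is not by itself a contradiction.

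The missing idea is to use $r_L\le6$ as a \emph{lower} bound for the number of triple points on each line of a suitable pencil, via \eqref{eq:bez5}, and then add up. The triple points on different lines of one pencil are distinct, because these lines meet only at its centre.
\begin{itemize}
\item If $q_1q_2\in\A$ in type $(10,14,1,2)$, a line $L\ni p$ satisfies $n_2^L+2n_3^L=9$ and $n_2^L+n_3^L\le5$. Hence $n_3^L\ge4$ and $n_3\ge16>14$; this is the paper's argument.
\item In type $(15,11,0,3)$, take the four lines through $q_1$ other than $\ell$ in the collinear case, or the four lines through $q_2$ other than $q_1q_2$ when $q_1q_2,q_1q_3\in\A$. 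None of them contains another point of multiplicity five. So $n_2^L+2n_3^L=8$ and $n_2^L+n_3^L\le5$ give $n_3^L\ge3$, hence $n_3\ge12>11$.
\end{itemize}

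The paper argues slightly differently for $(15,11,0,3)$. By \eqref{eq:sumr}, $\sum_Lr_L=78=13\cdot6$, so $r_L=6$ for every line. This excludes the collinear case through $r_\ell=3$. In the other case it forces exactly three triple points on each of the three remaining lines through $q_1$, so that $n_3=9$.
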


\begin{proof}
Let $L\in\A$. For $d=13$ and $m=5$, the second identity in \eqref{eq:delete},
that is, B\'ezout's theorem applied to $L$ and $\A\setminus\{L\}$, reads
\begin{equation}\label{eq:bez5}
n_2^L+2n_3^L+3n_4^L+4n_5^L=12 .
\end{equation}
If $L$ contains no point of multiplicity at least four, then
$n_2^L+2n_3^L=12$, while $n_2^L+n_3^L=r_L\le6$ by Proposition \ref{P:m5}.
Hence $n_3^L=6$ and $n_2^L=0$, that is, all singular points of $\A$ on $L$ are
triple points, which is impossible by Lemma \ref{lem:ziegler}. This proves the
first claim.

Consequently, the lines through the points of multiplicity four or five cover
$\A$. As there are four lines through a point of multiplicity four and five
lines through a point of multiplicity five, this requires $4n_4+5n_5\ge13$,
which excludes $(2,20,1,1)$, $(5,19,1,1)$, $(10,16,0,2)$ and $(7,17,0,2)$.

For the type $(5,17,2,1)$ we have $4n_4+5n_5=13$, so the pencils of lines
through the point $q$ of multiplicity five and the points $p_1,p_2$ of
multiplicity four are pairwise disjoint and form a partition of $\A$. Two lines
of the same pencil meet only at its centre, so every triple point lies on
exactly one line of each pencil. In particular, a triple point is the
intersection of a line through $p_1$ with a line through $p_2$, and distinct
triple points give distinct such pairs. Hence $n_3\le16<17$.

For the type $(10,14,1,2)$, let $p$ be the point of multiplicity four and
$q_1,q_2$ the points of multiplicity five. If $p,q_1,q_2$ lie on a line of
$\A$, the three pencils through them consist of $4+5+5-2=12<13$ lines. Otherwise
they consist of $14-j$ lines, where $j$ is the number of the lines $pq_1$,
$pq_2$, $q_1q_2$ belonging to $\A$. Since the pencils cover $\A$, we get
$j=1$. If this line is $q_1q_2$, then no line through $p$ passes through $q_1$
or $q_2$, so a line $L\ni p$ satisfies $n_2^L+2n_3^L=9$ by \eqref{eq:bez5}
and $n_2^L+n_3^L\le5$ by Proposition \ref{P:m5}, whence $n_3^L=4$. The four
lines through $p$ meet only at $p$, so they contain $16$ distinct triple
points, whereas $n_3=14$. Otherwise the line in $\A$ is $pq_i$ for some $i$,
say $pq_2$, and no line through $q_1$ passes through $p$ or $q_2$. By
\eqref{eq:bez5}, the $s$-th line through $q_1$ then contains $4-k_s$ triple
points and $2k_s$ double points for some integer $k_s\ge0$. As these five
lines meet only at $q_1$, we get $\sum_s(4-k_s)\le n_3=14$ and
$\sum_s2k_s\le n_2=10$, that is $\sum_sk_s\ge6$ and $\sum_sk_s\le5$, a
contradiction.

For the type $(15,11,0,3)$, \eqref{eq:sumr} gives
$\sum_{L\in\A}r_L=390-3\cdot107+3\cdot3=78$, so $r_L=6$ for every line $L$ by
Proposition \ref{P:m5}. Let $q_1,q_2,q_3$ be the points of multiplicity five.
If they lie on a line $\ell\in\A$, then \eqref{eq:bez5} gives
$n_2^\ell=n_3^\ell=0$, so $r_\ell=3$, a contradiction. Otherwise every line of
$\A$ contains at most two of the $q_i$, and if $j$ of the three lines $q_iq_k$
belong to $\A$, then the three pencils consist of $15-j$ lines. Since they
cover the thirteen lines of $\A$, we get $j=2$, say $q_1q_2,q_1q_3\in\A$ and
$q_2q_3\notin\A$. By \eqref{eq:bez5} and $r_L=6$, the lines $q_1q_2$ and
$q_1q_3$ contain no triple point, while every other line of $\A$ passes
through exactly one $q_i$ and contains exactly three triple points. The three
lines through a triple point pass through pairwise distinct points $q_i$, and
none of them is $q_1q_2$ or $q_1q_3$, so exactly one of them is one of the
three lines through $q_1$ other than $q_1q_2,q_1q_3$. Hence $n_3=3\cdot3=9$,
contradicting $n_3=11$.
\end{proof}

Proposition \ref{P:m5bezout} leaves eight of the fifteen numerical types of
Proposition \ref{P:m5}, namely $(8,16,2,1)$ and $(11,13,3,1)$ if
$\tau(\A)=106$, and $(8,14,3,1)$, $(11,11,4,1)$, $(14,8,5,1)$, $(17,5,6,1)$,
$(13,11,2,2)$ and $(16,8,3,2)$ if $\tau(\A)=107$. To settle them one would
have to extend the classification of Section \ref{sec:104} to configurations
containing blocks of size five, using the refined two-line condition of
Lemma \ref{lem:twoline} -- we leave this open.

\section{Two realizable lattices with a line through seven or more singular points}
\label{sec:extremal}

Arrangements of thirteen lines with $m=4$ and $\tau\in\{106,107\}$ having a
line through at least seven singular points are not needed for the proof, but
some of them are realizable, and we describe two of them. They show that the
bound $r_L\le6$ of Proposition \ref{P:thirteen} is not a consequence of the
numerical data $(d,m,\tau)$ alone: it uses the existence of the second member
$\A'$ of the pair, through Lemma \ref{lem:del}. In both examples the lines
through seven or more singular points have exactly $r_L=114-\tau$, the value
found in Remark \ref{rk:nolight}. By Proposition \ref{P:thirteen} neither
lattice can be the lattice of $\A$. Independently, their minimal degrees of
Jacobian relations are $7$ and $6$, see Subsection \ref{sub:verify}. We denote
the two arrangements by $\mathcal G_{106}$ and $\mathcal G_{107}$.

\subsection{On the verification}
\label{sub:verify}
The lattices and Tjurina numbers below were computed in
the relevant quadratic field $K$, represented as $\Q[u]/(u^2-c_1u-c_0)$ with
rational coefficients. By Lemma \ref{lem:galois}(i) the resulting value of $r$
is the one computed over $\C$. The Jacobian linear algebra was carried out modulo
primes $p$ in which the minimal polynomial of $u$ splits. Reduction modulo $p$
can only lower the rank of the coefficient matrix, so a vanishing $\AR(f)_r$
modulo $p$ forces $\AR(f)_r=0$ in characteristic zero, which is the direction
we need. The reverse inequality comes from \eqref{eq:dpwp} with no computation
at all:
\[
\tau=106>102=\tau(13,8)_{\max}'\ \Longrightarrow\ r\le7,
\qquad
\tau=107>106=\tau(13,7)_{\max}'\ \Longrightarrow\ r\le6 .
\]
Hence the values of $r$ stated below are proved.

\subsection{The arrangement \texorpdfstring{$\mathcal G_{106}$}{G106}}
\label{sec:106}
Let $u^2-u-1=0$, so that the field of definition is $\Q(\sqrt5)$. A dual
realization of $\mathcal G_{106}$, in which the point $p_i=(a_i:b_i:c_i)$ corresponds to the line
$a_ix+b_iy+c_iz=0$, is
\[
\begin{alignedat}{3}
p_0&=(1:0:0), &\quad p_1&=(0:1:0), &\quad p_2&=(0:0:1),\\
p_3&=(1:1:1), & p_4&=(0:1:u), & p_5&=(-1:-1:-u),\\
p_6&=(-u-1:-1:-u), & p_7&=(0:-u:-1), & p_8&=(1:0:1),\\
p_9&=(1:1:0), & p_{10}&=(-1:u-1:0), & p_{11}&=(-u-1:-1:-u-1),\\
p_{12}&=(-1:0:u-1). & & & &
\end{alignedat}
\]
Its seven quadruple points are
\[
\{0,4,5,6\},\ \{1,2,4,7\},\ \{1,3,8,11\},\ \{2,3,5,9\},\ \{3,6,7,10\},\
\{0,2,8,12\},\ \{0,1,9,10\},
\]
and its seven triple points are
\[
\{2,6,11\},\ \{3,4,12\},\ \{4,8,10\},\ \{5,7,8\},\ \{5,10,11\},\ \{6,8,9\},\
\{7,9,12\},
\]
so that $(n_2,n_3,n_4)=(15,7,7)$ and $\tau=106$. One has $\AR(f)_r=0$ for
$r\le6$, whence $r(\mathcal G_{106})=7$ by Subsection \ref{sub:verify}, and
$\mathcal G_{106}$ is maximal Tjurina of type $(13,7)$ with exponents
$(7,7,7,7)$, in accordance with $\dim\AR(f)_7=4$. Indeed,
$\tau(13,7)_{\max}'=144-35-3=106$, so this configuration lies exactly on the
maximal Tjurina boundary. The lines dual to $p_{11}$ and $p_{12}$ contain eight
singular points each, and every other line contains at most six.

The realization space of this matroid can be computed directly with the
method of Section \ref{sec:104}: fixing a projective frame and propagating the
incidences leaves a single parameter subject to a quadratic equation of
discriminant $5$, confirming the field of definition. The two roots give
Galois-conjugate realizations, which by Lemma \ref{lem:galois} have isomorphic
lattices and the same $r$, so this pair produces no jumping phenomenon.

\subsection{The arrangement \texorpdfstring{$\mathcal G_{107}$}{G107}}
\label{sec:107a}
Let $u^2+u+1=0$, a primitive cube root of unity, so that the field of
definition is $\Q(\sqrt{-3})$. A dual realization of $\mathcal G_{107}$ is
\[
\begin{alignedat}{3}
p_0&=(1:0:0), &\quad p_1&=(0:1:0), &\quad p_2&=(1:u:0),\\
p_3&=(0:0:1), & p_4&=(1:0:1), & p_5&=(0:1:1),\\
p_6&=(1:1:1), & p_7&=(1:u:u), & p_8&=(0:u:-1),\\
p_9&=(-1:-u:-1), & p_{10}&=(1:-1:0), & p_{11}&=(-1:0:u),\\
p_{12}&=(-1:1:u). & & & &
\end{alignedat}
\]
Its seven quadruple points are
\[
\{0,1,2,10\},\ \{0,3,4,11\},\ \{0,5,6,7\},\ \{1,3,5,8\},\ \{1,4,6,9\},\
\{2,3,7,9\},\ \{2,4,8,12\},
\]
and its eight triple points are
\[
\{1,11,12\},\ \{2,5,11\},\ \{3,10,12\},\ \{4,5,10\},\ \{5,9,12\},\
\{6,8,11\},\ \{7,8,10\},\ \{9,10,11\},
\]
so that $(n_2,n_3,n_4)=(12,8,7)$ and $\tau=107$. One has $\AR(f)_r=0$ for
$r\le5$, whence $r(\mathcal G_{107})=6$, and then \eqref{eq:nu} gives
$\nu(\mathcal G_{107})=108-107=1$, so $\mathcal G_{107}$ is nearly free with
exponents $(6,7)$, in accordance with $\dim\AR(f)_6=1$. The lines dual to
$p_6$, $p_7$ and $p_{12}$ contain seven singular points each, and every other
line contains at most six. After normalization of
a frame the realization space consists of two points, complex conjugate to
each other, so the two realizations have the same $r$ by
Lemma \ref{lem:galois}, and no deformation can produce a jump either.

\section{Degrees fourteen, fifteen and sixteen}
\label{sec:1516}

By Proposition \ref{P:rm} and Theorem \ref{corB} the next degrees to examine
are $14$, $15$ and $16$. The conditions $m\le r<(d-2)/2$, $d\ge2m+3$ and
$r\ge2d/m-2$ of Sections \ref{sec:prelim} and \ref{sec:first}, together with
$d\le3m+1$ when $r=m$ (Proposition \ref{P:rm}) and the exclusion of $r=m=4$,
leave the following triples $(d,r,m)$: for $d=14$ we have $r=5$ and $m\in\{4,5\}$,
for $d=15$ we have $r=5$ and $m=5$, or $r=6$ and $m\in\{4,5,6\}$. For $d=16$, we have $r=5$ and $m=5$, or $r=6$ and $m\in\{4,5,6\}$. On the side of $\A'$ we have
$r'\ge r+1$, so that
\begin{equation}\label{eq:tauup}
\tau(\A)=\tau(\A')\le\tau(d,r+1)'_{\max}
\end{equation}
by the monotonicity recalled in Section \ref{sec:prelim}, where
$\tau(d,k)'_{\max}$ stands for $\tau(d,k)_{\max}$ when $2k<d$. The equality forces
$r'=r+1$, and then $\A'$ is free if $2(r+1)<d$. For details regarding
computations, we refer again to \cite{Repo}.

\subsection{Deletion in degrees fourteen to sixteen}
\label{sub:del1416}
We first apply Lemma \ref{lem:del}, starting from the results in degree $13$.
For $d\in\{14,15,16\}$ and $m\le6$, formula \eqref{eq:sumr} reads
\begin{equation}\label{eq:sumr1416}
\sum_{L\in\A}r_L=5\binom d2-3\tau(\A)+n_4+3n_5+6n_6 ,
\end{equation}
and \eqref{eq:nk}, \eqref{eq:tauT} give, when $m\le5$,
\begin{equation}\label{eq:n2gen}
n_2=\binom d2-3\Bigl(\tau(\A)-\binom d2\Bigr)+3n_4+8n_5 ,
\qquad
n_3=\tau(\A)-\binom d2-3n_4-6n_5 .
\end{equation}

\begin{proposition}\label{P:fourteen}
Let $\A,\A'$ be a counterexample as in \eqref{eq:C2} with $d=14$. Then $r=5$,
$m\in\{4,5\}$, $\tau(\A)\in\{125,126\}$, every line of $\A$ contains at most
six singular points of $\A$, and
\[
n_4+3n_5\ \le\ 3\tau(\A)-371 .
\]
More precisely, $(n_2,\dots,n_m)$ is one of the following ten types:
\begin{enumerate}[label=\textnormal{(\alph*)}]
\item $m=4$: $(1,22,4)$ with $\tau(\A)=125$, and $(1,20,5)$, $(4,17,6)$,
$(7,14,7)$ with $\tau(\A)=126$;
\item $m=5$: $(0,25,1,1)$ with $\tau(\A)=125$, and $(0,23,2,1)$, $(3,20,3,1)$,
$(6,17,4,1)$, $(2,23,0,2)$, $(5,20,1,2)$ with $\tau(\A)=126$.
\end{enumerate}
If $m=4$, every line satisfies $n_3^L+2n_4^L\in\{7,8\}$. If $m=5$, then $\A$ is
POG with exponents $(5,9,d_3)$, where $d_3=137-\tau(\A)\in\{11,12\}$.
\end{proposition}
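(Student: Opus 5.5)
Start by pinning down $(r,m)$. For $d=14$, Proposition \ref{P:basic} gives $4\le m\le r<6$, so $r\in\{4,5\}$. If $r=4$ then $r=m=4$, which Proposition \ref{P:rm} excludes. Hence $r=5$ and $m\in\{4,5\}$; for $m=4$ this also matches $\floor*{(14-3)/2}=5$.

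Next bound $\tau(\A)$ from above. Since $r'\ge6=(14-2)/2$ and $d=14$ is even, $\tau(\A')\le\tau(14,6)_{\max}'=169-42-\binom{0}{2}=127$ by \eqref{eq:dpwp}, with equality only when $\A'$ is maximal Tjurina of type $(14,6)$. I would rather apply \eqref{eq:nu}: $r'\ge6$ gives $\nu(\A')=\ceil{3\cdot169/4}-\tau=127-\tau$. If $\tau=127$, then $\nu(\A')=0$, so $\A'$ is free; Terao's Conjecture in degree $14$ \cite{BK} then makes $\A$ free, which contradicts $\nu(\A)\ne\nu(\A')$. Hence $\tau(\A)\le126$.

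Now bound the number of singular points on each line. I apply Lemma \ref{lem:del}. Case (ii) would produce a counterexample of degree $13$ with $r=4$, and Proposition \ref{P:thirteen} forces $r=5$ in degree $13$, so case (ii) cannot occur. Therefore $r_L\le6$ for every line. Summing with \eqref{eq:sumr1416} gives $455-3\tau+n_4+3n_5\le84$, i.e. $n_4+3n_5\le3\tau-371$. For the lower bound on $\tau$, use $n_4\ge3$ when $m=4$ (Lemma \ref{lem:twoline}), which gives $\tau\ge125$ (since $\tau\ge (371+3)/3$). When $m=5$, use $n_5\ge1$ together with $(n_4,n_5)\ne(0,1)$, which gives $n_4+3n_5\ge4$ and again $\tau\ge125$. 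So $\tau\in\{125,126\}$.

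Then enumerate the types. Using \eqref{eq:n2gen} with $\binom{14}{2}=91$:
\[
n_3=\tau-91-3n_4-6n_5,\qquad n_2=91-3(\tau-91)+3n_4+8n_5.
\]
I list all solutions with $n_2,n_3\ge0$ satisfying the inequality above and $n_4\ge3$ (for $m=4$) or $(n_4,n_5)\ne(0,1)$ (for $m=5$). This should reproduce the ten listed types.

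The $m=4$ line condition follows from \eqref{eq:delete}: $n_2^L+2n_3^L+3n_4^L=13$, so $r_L=13-D_L$ where $D_L=n_3^L+2n_4^L$. Then $r_L\le6$ gives $D_L\ge7$. The bound $D_L\le8$ should come from $n_2^L=13-2D_L+n_4^L\ge0$ combined with $n_4^L\le4$ (from $3n_4^L\le13$), which gives $D_L\le8$.

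For $m=5$, Lemma \ref{lem:rm} applies since $r=m=5$ and $14\ge11$. Freeness with exponents $(5,8)$ would give $\tau=\tau(14,5)_{\max}=129$, contradicting $\tau\le126$. So $\A$ is POG with exponents $(5,9,d_3)$, and \eqref{eq:d3} gives
\[
\tau=169-40-(d_3-8)=137-d_3,
\]
hence $d_3\in\{11,12\}$.

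The main obstacle I expect is bookkeeping: the enumeration producing exactly the ten types, especially the $m=4$ types at $\tau=126$ (from $n_4\le7$) and checking that no other sign constraint prunes them. The theoretical steps are direct adaptations of the degree-$13$ arguments.
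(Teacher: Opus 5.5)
Your proposal is correct and follows the paper's proof essentially step by step: Propositions \ref{P:basic} and \ref{P:rm} give $r=5$; Lemma \ref{lem:del} with Proposition \ref{P:thirteen} gives $r_L\le6$; the incidence sum \eqref{eq:sumr1416} together with Lemma \ref{lem:twoline} gives $\tau(\A)\ge125$; Terao's Conjecture in degree $14$ excludes $\tau(\A)=127$; and Lemma \ref{lem:rm} with \eqref{eq:d3} gives the POG statement. The only differences are cosmetic. You obtain $\tau(\A)\le127$ from the second branch of \eqref{eq:nu}, since $r'\ge6=(d-2)/2$, instead of from \eqref{eq:tauup}, which is equivalent. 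Your passing citation of \eqref{eq:dpwp} at $r'=6<d/2$ is formally outside that inequality's range, but you do not rely on it.
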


\begin{proof}
By Theorem \ref{thmA} we have $4\le m\le r\le5$, and $m=4$ forces
$r=\floor{11/2}=5$ and hence $r=5$ and $m\in\{4,5\}$. A counterexample of degree
$13$ has minimal degree of a Jacobian relation equal to $5$, by
Proposition \ref{P:thirteen}, so case (ii) of Lemma \ref{lem:del} cannot occur,
and $r_L\leq 6$ for every line. By \eqref{eq:sumr1416},
$455-3\tau(\A)+n_4+3n_5\le84$, which is the stated inequality. By
Lemma \ref{lem:twoline}, $n_4\ge3$ if $m=4$ and $n_4+3n_5\ge4$ if $m=5$. In both
cases $3\tau(\A)\ge374$, that is $\tau(\A)\ge125$. On the other hand,
\eqref{eq:tauup} gives $\tau(\A)\le\tau(14,6)_{\max}=127$, with equality only
if $\A'$ is free. Then $\A$ would be free by Terao's Conjecture in degree $14$
\cite{BK}, which is impossible, and hence $\tau(\A)\in\{125,126\}$.

By \eqref{eq:n2gen}, $n_2=364-3\tau(\A)+3n_4+8n_5$ and
$n_3=\tau(\A)-91-3n_4-6n_5$, and the list consists of the solutions with
$n_2,n_3\ge0$ and $n_4+3n_5\le3\tau(\A)-371$, subject to the conditions of
Lemma \ref{lem:twoline}. For $m=4$, the relation
$n_2^L+2n_3^L+3n_4^L=13$ of \eqref{eq:delete} gives $r_L=13-D_L$ and
$n_2^L=13-2D_L+n_4^L$, where $D_L=n_3^L+2n_4^L$. Hence $r_L\le6$ gives
$D_L\ge7$, while $n_2^L\ge0$ and $n_4^L\le4$ give $D_L\le8$. The last
assertion follows from Lemma \ref{lem:rm} and \eqref{eq:d3}, the free case
being excluded as in the proof of Proposition \ref{P:basic}.
\end{proof}

Proposition \ref{P:fourteen} places degree $14$ in the same situation as the
case $d=13$, $m=4$ of Section \ref{sec:thirteen}, i.e., finitely many numerical
types, and a sharp bound on the number of singular points of every line. The
method of Section \ref{sec:104} applies without change to the four types with
$m=4$, and after the extension to blocks of size five mentioned in
Subsection \ref{sub:m5} also to the six types with $m=5$. However, we have not carried out this classification. Note that the ten types have $n_2\le7$, so that
almost all pairs of lines meet in points of multiplicity at least three.

\begin{proposition}\label{P:fifteen}
Let $\A,\A'$ be a counterexample as in \eqref{eq:C2} with $d=15$.
\begin{enumerate}[label=\textnormal{(\roman*)}]
\item If $r=5$, then $m=5$, $\tau(\A)=148$, $\A$ is POG with exponents
$(5,10,12)$, $\A'$ is free with exponents $(6,8)$, every line of $\A$ contains
exactly six singular points of $\A$, and
\[
(n_2,n_3,n_4,n_5)\in\{(2,19,6,1),\ (1,22,3,2),\ (0,25,0,3)\}.
\]
\item If $r=6$, then every line $L\in\A$ satisfies $r_L\le154-\tau(\A)$.
Moreover, if $r_L\ge8$, then $\A\setminus\{L\},\A'\setminus\{L'\}$ is a
counterexample as in Proposition \ref{P:fourteen}, and
$r_L\in\{153-\tau(\A),\,154-\tau(\A)\}$. In particular, if $\tau(\A)=147$, then
every line of $\A$ contains at most seven singular points of $\A$.
\end{enumerate}
\end{proposition}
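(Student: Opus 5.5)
The plan is to treat the two cases separately, with deletion (Lemma \ref{lem:del}) and the line-count identity \eqref{eq:sumr1416} as the main tools, and to use Proposition \ref{P:fourteen} as input from degree $14$.

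For (i), let $r=5$. Proposition \ref{P:basic} with $m=4$ would force $r=\floor{12/2}=6$, so $m=5=r$.

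\emph{Exponents and the range of $\tau$.} Lemma \ref{lem:rm} applies since $d=15\ge 11$. The free case is excluded as in Proposition \ref{P:basic}, so $\A$ is POG with exponents $(5,10,d_3)$ and $10\le d_3\le 13$. Then \eqref{eq:d3} gives $\tau(\A)=160-d_3$. Since $r'\ge6$, \eqref{eq:tauup} gives $\tau(\A)\le\tau(15,6)_{\max}=148$, hence $d_3\ge12$.

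\emph{Bound on the lines.} Every counterexample of degree $14$ has $r=5$ by Proposition \ref{P:fourteen}, so case (ii) of Lemma \ref{lem:del}, which would need $r(\B)=4$, is impossible. Hence $r_L\le 6$ for all fifteen lines. Then \eqref{eq:sumr1416} gives
\[
525-3\tau(\A)+n_4+3n_5\le 90 .
\]
Lemma \ref{lem:twoline} gives $n_4+3n_5\ge 4$, so $\tau(\A)\ge147$, and $\tau(\A)\in\{147,148\}$.

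\emph{The case $\tau(\A)=148$.} Equality holds in \eqref{eq:tauup}, which forces $r'=6$ and $\A'$ free with exponents $(6,8)$, and $d_3=12$. The inequality above becomes $n_4+3n_5\le 9$. The incidence sum is $81+n_4+3n_5\le 90$, so every line has exactly six singular points precisely when $n_4+3n_5=9$. To force this I would repeat the B\'ezout argument of Proposition \ref{P:m5bezout}. A line through no point of multiplicity $\ge4$ satisfies $n_2^L+2n_3^L=14$ and $n_2^L+n_3^L\le6$, which is impossible. Hence the pencils through the points of multiplicity four and five cover $\A$, so $4n_4+5n_5\ge 15$. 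Combining this with $n_5\ge1$ and the formulas \eqref{eq:n2gen}, I expect to be left with exactly the three listed types.

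\emph{The case $\tau(\A)=147$ is the main obstacle.} Here $n_4+3n_5\le 6$, and the covering condition $4n_4+5n_5\ge15$ leaves only $(n_4,n_5)=(3,1)$, so $(n_2,n_3)=(2,27)$. I would first try to kill this type by the pencil-partition counting used for $(5,17,2,1)$ and $(10,14,1,2)$. The argument has four steps:
\begin{enumerate}[label=\textnormal{(\arabic*)}]
\item Locate which of the lines joining the point $q$ of multiplicity five and the points $p_1,p_2,p_3$ of multiplicity four belong to $\A$. This is forced by $5+12-j\ge 15$, where $j$ is the number of such joining lines in $\A$.
\item On each remaining line through a $p_i$ or through $q$, read off the numbers $n_2^L,n_3^L$ from \eqref{eq:delete} together with $r_L\le6$.
\item Count the triple points along one pencil, where the lines meet only at the centre.
\item Compare this count with $n_3=27$ and with $n_2=2$.
\end{enumerate}
If this counting does not close, I would fall back on the side of $\A'$. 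For $\tau=147$ the arrangement $\A'$ is either nearly free with $r'=6$ or free with exponents $(7,7)$. I would then delete a line from $\A'$ and compare with Lemma \ref{lem:nofree}.

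For (ii), let $r=6$. Lemma \ref{lem:deltau} with $\tau(14,6)_{\max}=127$ gives directly
\[
r_L\le 27+127-\tau(\A)=154-\tau(\A).
\]
If $r_L\ge 8>r+1$, then case (i) of Lemma \ref{lem:del} fails, so $r(\B)=5$ and $\B,\B'$ is a degree-$14$ counterexample. By Proposition \ref{P:fourteen}, $\tau(\B)\in\{125,126\}$. Then \eqref{eq:deleterL} gives $r_L=28-\tau(\A)+\tau(\B)\in\{153-\tau(\A),154-\tau(\A)\}$. For $\tau(\A)=147$ this says $r_L\le7$.
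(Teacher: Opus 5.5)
Your part (ii) is exactly the paper's argument, and your handling of $\tau(\A)=148$ in part (i) works once you actually solve the system. There is, however, a genuine gap at $\tau(\A)=147$. You leave that case open, behind a pencil-counting plan and a fallback via $\A'$ and Lemma \ref{lem:nofree}, neither of which is carried out.

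The underlying problem is that your numerical type $(n_2,n_3,n_4,n_5)=(2,27,3,1)$ is miscomputed. By \eqref{eq:n2gen}, for $\tau(\A)=147$ one has $n_2=3n_4+8n_5-21$. So $(n_4,n_5)=(3,1)$ gives $n_2=-4$; equivalently, $2+3\cdot27+6\cdot3+10=111\ne\binom{15}{2}$. In fact every candidate with $n_5\ge1$ and $n_4+3n_5\le6$ has $n_2<0$. Your steps (1)--(4) therefore target a type that does not exist, and as written they are a plan rather than a proof.

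The missing idea is to pair the incidence bound with $n_2\ge0$ for general $\tau(\A)$, instead of with Lemma \ref{lem:twoline}. From \eqref{eq:sumr1416} and $r_L\le6$ you get $n_4+3n_5\le3\tau(\A)-435$. From $n_2\ge0$ in \eqref{eq:n2gen} you get $3n_4+8n_5\ge3\tau(\A)-420$. Hence
\[
3\tau(\A)-420\le3n_4+8n_5\le3(n_4+3n_5)\le9\tau(\A)-1305,
\]
so $\tau(\A)\ge148$ immediately. For $\tau(\A)=147$ concretely, you would need $3n_4+8n_5\ge21$ while $3n_4+8n_5\le18$.

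At $\tau(\A)=148$, these two inequalities together with $n_5\ge1$ leave exactly the three listed types. Each of them has $n_4+3n_5=9$, so $\sum_Lr_L=90$ and $r_L=6$ on every line. This is how the paper argues, and it makes your B\'ezout covering step unnecessary.
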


\begin{proof}
(i) We have $4\le m\le r=5$, and $m=4$ would force $r=\floor{12/2}=6$ and hence
$m=5$. By Proposition \ref{P:fourteen}, a counterexample of degree $14$ has
$r=5$, so case (ii) of Lemma \ref{lem:del} cannot occur and $r_L\le6$ for every
line. Then \eqref{eq:sumr1416} gives $525-3\tau(\A)+n_4+3n_5\le90$, that is
$n_4+3n_5\le3\tau(\A)-435$, while $n_2\ge0$ and \eqref{eq:n2gen} give
$3n_4+8n_5\ge3\tau(\A)-420$. Hence
\[
3\tau(\A)-420\ \le\ 3n_4+8n_5\ \le\ 3(n_4+3n_5)\ \le\ 9\tau(\A)-1305 ,
\]
so $\tau(\A)\ge148$. By \eqref{eq:tauup}, $\tau(\A)\le\tau(15,6)_{\max}=148$,
with equality only if $r'=6$ and $\A'$ is free, with exponents $(6,8)$. Thus
$\tau(\A)=148$, and Lemma \ref{lem:rm} and \eqref{eq:d3} show that $\A$ is POG
with exponents $(5,10,12)$. The three types are the solutions of
$n_4+3n_5\le9$ and $3n_4+8n_5\ge24$ with $n_5\ge1$. All of them have
$n_4+3n_5=9$, so that $\sum_Lr_L=90$ by \eqref{eq:sumr1416}, and every line
contains exactly six singular points.

(ii) The first inequality is Lemma \ref{lem:deltau}, since
$\tau(14,6)_{\max}=127$. If $r_L\ge8=r+2$, then case (i) of
Lemma \ref{lem:del} fails, so $\B,\B'$ is a counterexample of degree $14$ and
$\tau(\B)\in\{125,126\}$ by Proposition \ref{P:fourteen}. By
\eqref{eq:deleterL}, $r_L=\tau(\B)+28-\tau(\A)$.
\end{proof}

In particular, the case $(d,r,\tau(\A))=(15,5,147)$ of Table \ref{tab:1516}
below does not occur.

\begin{proposition}\label{P:sixteen}
Let $\A,\A'$ be a counterexample as in \eqref{eq:C2} with $d=16$. Then $r=6$.
Moreover:
\begin{enumerate}[label=\textnormal{(\roman*)}]
\item if $m=4$, then every line of $\A$ contains at most seven singular points
of $\A$, and $164\le\tau(\A)\le169$;
\item if $m\in\{5,6\}$ and a line $L\in\A$ contains at least eight singular
points of $\A$, then $\A\setminus\{L\},\A'\setminus\{L'\}$ is a counterexample
as in Proposition \ref{P:fifteen}\textnormal{(i)}, so that $r_L=178-\tau(\A)$ and
$\A'\setminus\{L'\}$ is free. In particular, this cannot happen if Terao's
Conjecture holds in degree $15$. If it does and $m=5$, then
$165\le\tau(\A)\le169$.
\end{enumerate}
\end{proposition}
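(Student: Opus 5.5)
The plan is to run the deletion lemma (Lemma \ref{lem:del}) against the degree-$15$ results of Proposition \ref{P:fifteen}, and then use the incidence count \eqref{eq:sumr1416} together with the du Plessis--Wall bound \eqref{eq:tauup}. Everything reduces to elementary inequalities; no classification is needed.

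\emph{Step 1: $r=6$.} By the list of admissible triples at the start of Section \ref{sec:1516}, the only alternative is $r=5$, $m=5$ (for $m=4$, Proposition \ref{P:basic} forces $r=\floor{13/2}=6$). Suppose $r=5$, $m=5$.
\begin{itemize}
\item Case (ii) of Lemma \ref{lem:del} would give a counterexample of degree $15$ with minimal Jacobian degree $4$. This is impossible, since degree-$15$ counterexamples have $r\ge5$. Hence $r_L\le6$ for every line.
\item Then \eqref{eq:sumr1416} gives $600-3\tau(\A)+n_4+3n_5\le96$, that is, $n_4+3n_5\le3\tau(\A)-504$.
\item The condition $n_2\ge0$ in \eqref{eq:n2gen} reads $3n_4+8n_5\ge3\tau(\A)-480$.
\item Chaining $3\tau(\A)-480\le3n_4+8n_5\le3(n_4+3n_5)\le9\tau(\A)-1512$ yields $\tau(\A)\ge172$.
\item On the other hand \eqref{eq:tauup} gives $\tau(\A)\le\tau(16,6)_{\max}=171$.
\end{itemize}
This contradiction shows $r=6$, and it also explains why the case $(16,5,171)$ does not occur.

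\emph{Step 2: part (i), $m=4$.} If some line had $r_L\ge8=r+2$, case (ii) of Lemma \ref{lem:del} would produce a counterexample of degree $15$ with $r=5$ and maximal multiplicity at most $4$. Proposition \ref{P:fifteen}(i) forces $m=5$ for such counterexamples, so this cannot happen. Hence $r_L\le7$ for every line. Then \eqref{eq:sumr1416} gives $600-3\tau(\A)+n_4\le112$. Combined with $n_4\ge3$ from Lemma \ref{lem:twoline}, this gives $3\tau(\A)\ge491$, so $\tau(\A)\ge164$. The upper bound comes from \eqref{eq:tauup}: since $2\cdot7<16$, we get $\tau(\A)\le\tau(16,7)_{\max}=225-56=169$.

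\emph{Step 3: part (ii), $m\in\{5,6\}$.} Suppose $r_L\ge8$. Then case (ii) of Lemma \ref{lem:del} again applies, so $\B,\B'$ is a degree-$15$ counterexample with $r(\B)=5$. By Proposition \ref{P:fifteen}(i):
\begin{itemize}
\item $\tau(\B)=148$ and $\B'$ is free;
\item $\B$ is POG, hence not free.
\end{itemize}
Formula \eqref{eq:deleterL} then gives $r_L=30-\tau(\A)+148=178-\tau(\A)$. If Terao's Conjecture held in degree $15$, freeness of $\B'$ would make $\B$ free, a contradiction; so this situation cannot occur under that hypothesis.

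\emph{Step 4: the bound for $m=5$ under Terao's Conjecture.} Assuming Terao's Conjecture in degree $15$ and $m=5$, Step 3 gives $r_L\le7$ for all lines.
\begin{itemize}
\item \eqref{eq:sumr1416} gives $n_4+3n_5\le3\tau(\A)-488$.
\item Lemma \ref{lem:twoline} gives $n_5\ge1$ and $(n_4,n_5)\ne(0,1)$, so $n_4+3n_5\ge4$, whence $\tau(\A)\ge164$.
\item At $\tau(\A)=164$ the only possibility is $(n_4,n_5)=(1,1)$. Then \eqref{eq:n2gen} gives $n_2=480-492+3+8=-1<0$, which is impossible. So $\tau(\A)\ge165$.
\item The bound $\tau(\A)\le169$ follows from \eqref{eq:tauup} as in Step 2.
\end{itemize}

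The main delicate point is this last exclusion of $\tau(\A)=164$ via $n_2\ge0$. Apart from that, one must check in each step that the hypotheses of Lemma \ref{lem:del}(ii) match exactly the cases settled by Propositions \ref{P:fourteen} and \ref{P:fifteen}, in particular that the bound $m(\B)\le m$ is what rules out the $m=4$ case.
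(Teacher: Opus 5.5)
Your proof is correct and follows essentially the same route as the paper: you exclude $r=5$ via Lemma \ref{lem:del}, \eqref{eq:sumr1416}, \eqref{eq:n2gen} and $\tau(16,6)_{\max}=171$, then feed any line with $r_L\ge8$ into Proposition \ref{P:fifteen}(i), and finally bound $\tau(\A)$ using the incidence count together with $\tau(16,7)_{\max}=169$. The only deviations are cosmetic: for $m=4$ you get $\tau(\A)\ge164$ from $n_4\ge3$ (Lemma \ref{lem:twoline}) rather than from $n_2\ge0$, and for $m=5$ you reach $\tau(\A)\ge164$ through Lemma \ref{lem:twoline} and then rule out $\tau(\A)=164$ by the explicit check $n_2=-1$, whereas the paper combines $n_2\ge0$ and $n_5\ge1$ in a single chain of inequalities.
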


\begin{proof}
Suppose first that $r=5$, then $m=5$ since $m=4$ forces $r=\floor{13/2}=6$.
Every counterexample of degree $15$ has $r\ge5$, by Theorem \ref{thmA} and
since $m=4$ forces $r=6$ in degree $15$, hence case (ii) of Lemma \ref{lem:del}
cannot occur, and $r_L\le6$ for every line. By \eqref{eq:sumr1416} and
\eqref{eq:n2gen},
\[
3\tau(\A)-480\ \le\ 3n_4+8n_5\ \le\ 3(n_4+3n_5)\ \le\ 9\tau(\A)-1512 ,
\]
so that $\tau(\A)\ge172$, contradicting $\tau(\A)\le\tau(16,6)_{\max}=171$.
Hence $r=6$.

Let now $L\in\A$ with $r_L\ge8=r+2$. By Lemma \ref{lem:del}, $\B,\B'$ is a
counterexample of degree $15$ with $r(\B)=5$ and $m(\B)\le m$, so by
Proposition \ref{P:fifteen}(i) we have $m(\B)=5$, $\tau(\B)=148$ and $\B'$
free. Moreover, $r_L=\tau(\B)+30-\tau(\A)=178-\tau(\A)$ by \eqref{eq:deleterL}.
This proves (ii), and shows that no such line exists when $m=4$. In that
case \eqref{eq:sumr1416} gives $600-3\tau(\A)+n_4\le112$, while $n_2\ge0$ gives
$n_4\ge\tau(\A)-160$ by \eqref{eq:n2gen}. Hence
$\tau(\A)-160\le3\tau(\A)-488$, that is $\tau(\A)\ge164$. If $m=5$ and every line has $r_L\le7$, then
\eqref{eq:sumr1416} gives $n_4+3n_5\le3\tau(\A)-488$, while $n_2\ge0$ gives
$3n_4+8n_5\ge3\tau(\A)-480$ by \eqref{eq:n2gen}. As $n_5\ge1$, we get
$3\tau(\A)-480\le3(n_4+3n_5)-n_5\le9\tau(\A)-1465$, so $\tau(\A)\ge165$. The upper bound is
$\tau(\A)\le\tau(16,7)_{\max}=169$ by \eqref{eq:tauup}, which completes the proof.
\end{proof}

\subsection{The case \texorpdfstring{$r=m$}{r=m}}
Here Lemma \ref{lem:rm} applies: as $\A$ is not free, it is POG with exponents
$(r,d-r,d_3)$, where $d-r\le d_3\le d-2$, and \eqref{eq:d3} gives
\[
\tau(\A)=\tau(d,r)_{\max}-(d_3-d+r+1).
\]
The bound \eqref{eq:tauup} then bounds $d_3$ from below, and
Table \ref{tab:1516} records the result. In the last column,
$\A'$ is described by the value of $\tau(\A')$ relative to the du
Plessis--Wall bound for each admissible $r'$. Recall that a minimal POG arrangement with exponents $(r',d-r',d-r'+1)$ is characterized by
$\tau=\tau(d,r')_{\max}-2$, see \cite{DimcaSticlaruMPOG}.

\begingroup
\small
\begin{longtable}{ccccl}
\caption{The numerical types in degrees $14$, $15$ and $16$ with $r=m$
allowed by \eqref{eq:tauup}. Here $r=m$ and $d_3$ refer to $\A$, which is POG
with exponents $(r,d-r,d_3)$, and the last column describes $\A'$, which has
$r'>r$. The rows marked $\dagger$ do not occur: $(14,5,127)$ by
Proposition \ref{P:fourteen}, $(15,5,147)$ by Proposition \ref{P:fifteen} and
$(16,5,171)$ by Proposition \ref{P:sixteen}.}
\label{tab:1516}\\
\toprule
$d$ & $r$ & $m$ & $d_3(\A)$, $\tau(\A)$ & possible types of $\A'$\\
\midrule
\endfirsthead
\toprule
$d$ & $r$ & $m$ & $d_3(\A)$, $\tau(\A)$ & possible types of $\A'$\\
\midrule
\endhead
\bottomrule
\endlastfoot
14 & 5 & $5$ & $10$, $127^\dagger$ & free $(6,7)$\\
14 & 5 & $5$ & $11$, $126$ & nearly free $(6,8)$; nearly free $(7,7)$, i.e.\ maximal Tjurina $(14,7)$\\
14 & 5 & $5$ & $12$, $125$ & minimal POG $(6,8,9)$ or $(7,7,8)$\\
\midrule
15 & 5 & $5$ & $12$, $148$ & free $(6,8)$\\
15 & 5 & $5$ & $13$, $147^\dagger$ & nearly free $(6,9)$; free $(7,7)$\\
15 & 6 & $6$ & $9$, $147$ & free $(7,7)$\\
15 & 6 & $6$ & $10$, $146$ & nearly free $(7,8)$\\
15 & 6 & $6$ & $11$, $145$ & minimal POG $(7,8,9)$; maximal Tjurina $(15,8)$\\
15 & 6 & $6$ & $12$, $144$ & $r'=7$, $\tau=\tau(15,7)_{\max}-3$; $r'=8$, $\tau=\tau(15,8)'_{\max}-1$\\
15 & 6 & $6$ & $13$, $143$ & $r'=7$, $\tau=\tau(15,7)_{\max}-4$; $r'=8$, $\tau=\tau(15,8)'_{\max}-2$\\
\midrule
16 & 5 & $5$ & $14$, $171^\dagger$ & free $(6,9)$\\
16 & 6 & $6$ & $11$, $169$ & free $(7,8)$\\
16 & 6 & $6$ & $12$, $168$ & nearly free $(7,9)$; nearly free $(8,8)$, i.e.\ maximal Tjurina $(16,8)$\\
16 & 6 & $6$ & $13$, $167$ & minimal POG $(7,9,10)$ or $(8,8,9)$\\
16 & 6 & $6$ & $14$, $166$ & $r'=7$, $\tau=\tau(16,7)_{\max}-3$; $r'=8$, $\tau=\tau(16,8)'_{\max}-2$\\
\end{longtable}
\endgroup

The row $d=16$, $r=5$ lies exactly on the boundary $d=3m+1$ of
Proposition \ref{P:rm} -- it forces $m=5$, $d_3=14$, $\tau(\A)=171$ and $\A$
being POG with exponents $(5,11,14)$, and it is excluded by
Proposition \ref{P:sixteen}.

\subsection{The case \texorpdfstring{$r>m$}{r>m}}
Here $\A$ need not be POG, and the a priori lower bound is the du
Plessis--Wall inequality $\tau(\A)\ge\tau(d,r)_{\min}$ of \eqref{eq:dpw},
which gives $104$, $112$ and $135$ for $d=14,15,16$ respectively. In two of the four cases,
and in a third one conditionally on Terao's Conjecture in degree $15$, it is
superseded by the bounds of Section \ref{sub:del1416}. Table \ref{tab:1516b}
records the resulting intervals.

\begingroup
\small
\begin{longtable}{cccl}
\caption{The admissible values of $\tau(\A)$ in degrees $14$, $15$ and $16$
with $r>m$. The bound marked $\ast$ assumes Terao's Conjecture in degree $15$;
unconditionally one has $135\le\tau(\A)$ there.}
\label{tab:1516b}\\
\toprule
$d$ & $r$ & $m$ & $\tau(\A)$\\
\midrule
\endfirsthead
\toprule
$d$ & $r$ & $m$ & $\tau(\A)$\\
\midrule
\endhead
\bottomrule
\endlastfoot
14 & 5 & $4$ & $125\le\tau(\A)\le126$ (Proposition \ref{P:fourteen}); $\A'$ nearly free if $\tau(\A)=126$\\
15 & 6 & $4, \, 5$ & $112\le\tau(\A)\le147$; $\A'$ free $(7,7)$ if $\tau(\A)=147$\\
16 & 6 & $4$ & $164\le\tau(\A)\le169$ (Proposition \ref{P:sixteen}); $\A'$ free $(7,8)$ if $\tau(\A)=169$\\
16 & 6 & $5$ & $165^\ast\le\tau(\A)\le169$; $\A'$ free $(7,8)$ if $\tau(\A)=169$\\
\end{longtable}
\endgroup

\subsection{Cases depending on Terao's Conjecture}

\begin{proposition}\label{P:terao}
In each of the three cases where $(d,r,\tau(\A))$ is one of $(15,5,148)$,
$(15,6,147)$, $(16,6,169)$, the arrangement $\A'$ is forced to be free, and a
counterexample would contradict Terao's Conjecture in degree $d$. The two
further cases $(14,5,127)$ and $(16,5,171)$ in which $\A'$ would be forced to
be free do not occur, by \cite{BK} and Proposition \ref{P:sixteen}
respectively.
\end{proposition}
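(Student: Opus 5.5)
The plan is to treat each of the five triples $(d,r,\tau(\A))$ in the same way. Combine \eqref{eq:tauup} with the strict monotonicity of the du Plessis--Wall bound to pin down $r'$, and then read off freeness of $\A'$ from the characterization $\nu(\A')=0$ via \eqref{eq:nu}. In each case $\tau(\A)=\tau(\A')$ equals $\tau(d,r+1)_{\max}$. Concretely, we have $\tau(15,6)_{\max}=196-54=148$, $\tau(15,7)_{\max}=196-49=147$, $\tau(16,7)_{\max}=225-56=169$, $\tau(14,6)_{\max}=169-42=127$ and $\tau(16,6)_{\max}=225-54=171$. Since $r'\ge r+1$, the bound \eqref{eq:tauup} holds.

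The first step is to show $r'=r+1$. If $r'\ge r+2$, strict decrease of the function $k\mapsto\tau(d,k)_{\max}$ (continued by $\tau(d,k)'_{\max}$) would give $\tau(\A')<\tau(d,r+1)_{\max}=\tau(\A)$, contradicting lattice invariance of $\tau$. So $r'=r+1$.

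The second step checks that $r+1\le(d-1)/2$ in each case, so that \eqref{eq:taunu} applies with $s=r'$. The relevant pairs are $6\le7$ and $7\le7$ for $d=15$, $7\le7.5$ for $d=16$, $6\le6.5$ for $d=14$, and $6\le7.5$ for $(16,5)$. Then $\tau(\A')\le\tau(d,r')_{\max}-\nu(\A')$, and equality of $\tau$ forces $\nu(\A')=0$. Hence $\A'$ is free by \cite{Dfree}, with exponents $(r',d-1-r')$. This gives $(6,8)$ and $(7,7)$ for $d=15$, $(7,8)$ for $d=16$, $(6,7)$ for $(14,5)$ and $(6,9)$ for $(16,5)$.

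On the other side, $\A$ itself is not free: $\nu(\A)\ne\nu(\A')$ by \eqref{eq:C2}, or directly because $\tau(\A)<\tau(d,r)_{\max}$. So $\LL(\A)\cong\LL(\A')$ with $\A'$ free and $\A$ not free, which is exactly a violation of Terao's Conjecture in degree $d$.

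For the two excluded cases, $(14,5,127)$ contradicts \cite{BK}, since Terao's Conjecture is known for at most $14$ lines; this is as in Proposition \ref{P:fourteen}. The case $(16,5,171)$ is excluded because Proposition \ref{P:sixteen} shows $r=6$ whenever $d=16$.

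The only delicate point is ensuring that $r'=r+1$ stays in the unbalanced or boundary range where the first formula of \eqref{eq:nu} equals $\tau(d,r')_{\max}-\tau$. For $(15,6,147)$ we have $r'=7=(d-1)/2$, which is exactly the boundary case. There one should note that both expressions in \eqref{eq:nu} agree, since $\lceil 3\cdot14^2/4\rceil=147=\tau(15,7)_{\max}$, so $\nu(\A')=0$ regardless of which formula applies.
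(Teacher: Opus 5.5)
Your proposal is correct and follows the paper's argument. Equality in \eqref{eq:tauup} forces $r'=r+1$ and $\nu(\A')=0$, so $\A'$ is free with exponents $(r',d-1-r')$ while $\A$ is not, and the two extra cases are disposed of by \cite{BK} and Proposition \ref{P:sixteen}; the paper leaves the step $r'=r+1$ to the remark after \eqref{eq:tauup}, which you spell out, and there is one arithmetic slip: $\tau(15,6)_{\max}=196-6\cdot8=196-48$, not $196-54$, though the value $148$ is right.
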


\begin{proof}
In those cases $\tau(\A')=\tau(d,r')_{\max}$, so $\A'$ is free with exponents
$(r',d-1-r')$. As $\LL(\A)\cong\LL(\A')$, Terao's Conjecture in degree $d$ would
make $\A$ free with the same exponents, whence $r(\A)=r'$ contradicting
$r(\A)<r'$. Terao's Conjecture is known to hold for $d\le14$ by \cite{BK}.
\end{proof}

Propositions \ref{P:fourteen}, \ref{P:fifteen}, \ref{P:sixteen} and
\ref{P:terao} prove Theorem \ref{thmC}. For $d=15$ and $d=16$, Terao's
Conjecture is open, so Proposition \ref{P:terao} unfortunately only locates the difficulty rather than removing it. The same is true of the cases in which $\A'$ is nearly free, namely $\tau(\A)=126$ for $d=14$, $\tau(\A)=146$ for $d=15$ and $\tau(\A)=168$ for $d=16$. In both tables they would be settled by the analogue of Terao's Conjecture for nearly free arrangements were it known in
those degrees, compare \cite{AIM,DIM2}. All remaining cases are independent of
both conjectures, since there $\nu(\A')\ge2$. In Table \ref{tab:1516} these
are $d=14$ with $\tau(\A)=125$, $d=15$ with $\tau(\A)\in\{143,144,145\}$ and
$d=16$ with $\tau(\A)\in\{166,167\}$. In Table \ref{tab:1516b} they are the
values $\tau(\A)\le\tau(d,r+1)_{\max}-2$, for which
$\nu(\A')=\ceil*{3(d-1)^2/4}-\tau(\A)\ge2$ because $r'\ge r+1\ge(d-2)/2$.

\subsection{A criterion of Abe and Dimca}
\label{sub:charpoly}

When Terao's Conjecture is not available one may instead invoke
\cite[Theorem 1.5]{AbeDimcaSplit}: if a line arrangement $\A$ has a line
containing at most four intersection points of $\A$, and that line carries no
point of multiplicity at least $|\A|/2$, then whether $\nu(\A)\le1$, that is
whether $\A$ is free or nearly free, is determined by the reduced
characteristic polynomial $\chi(\A;t)$ of $\A$ of degree two, and if
$\chi(\A;t)$ is not a perfect square, it determines which of the two holds.
Both hypotheses are conditions on $\LL(\A)$, so they hold for $\A$ if and only
if they hold for $\A'$. This yields the following result.

\begin{proposition}\label{P:charpoly}
Let $\A,\A'$ be a counterexample and suppose that some line of $\A$ contains at
most four intersection points of $\A$. Then $\A'$ is not nearly free, and $\A'$
is free only if
\[
d=2r+3,\qquad r'=r+1,\qquad \chi(\A;t)=(t-r')^2 .
\]
\end{proposition}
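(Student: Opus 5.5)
The plan is to apply \cite[Theorem 1.5]{AbeDimcaSplit} to both members of the pair, and then read off what the reduced characteristic polynomial forces. Let $L\in\A$ have at most four intersection points. Since $m\le r<(d-2)/2$ by Proposition \ref{P:basic}, no point of $\A$ has multiplicity at least $d/2$, so both hypotheses of the criterion hold for $\A$. They are conditions on $\LL(\A)$, so they also hold for $L'\in\A'$.

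For the nearly free case, suppose $\A'$ were nearly free, so $\nu(\A')=1$. The criterion says that the property $\nu\le1$ is determined by $\chi(\A;t)=\chi(\A';t)$, hence $\nu(\A)\le1$. But $\nu(\A)\ne\nu(\A')$, so $\nu(\A)=0$ and $\A$ is free. A free $\A$ has $\chi(\A;t)=(t-d_1)(t-d_2)$ with $d_1=r$. If this is not a perfect square, the criterion decides between free and nearly free from $\chi$, so $\A'$ would be free as well, a contradiction. If it is a perfect square, then $d_1=d_2$, so $r=(d-1)/2$, contradicting $r<(d-2)/2$. Hence $\A'$ is not nearly free.

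For the free case, suppose $\A'$ is free with exponents $(r',d-1-r')$. Then $\chi(\A;t)=(t-r')(t-d+1+r')$. If this is not a square, the criterion makes $\A$ free with the same exponents, giving $r=r'$, which is impossible. So $\chi$ is a square, $r'=(d-1)/2$ and $\chi(\A;t)=(t-r')^2$. It remains to show $r'=r+1$, which yields $d=2r+3$. For this, note $\nu(\A)\le1$ by the criterion and $\A$ is not free, so $\A$ is nearly free with $\nu(\A)=1$. Then \eqref{eq:nu} gives $\tau(\A)=\tau(d,r)_{\max}-1$, while freeness of $\A'$ gives $\tau(\A')=\tau(d,r')_{\max}$. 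Setting these equal, and using that $\tau(d,\cdot)_{\max}$ is strictly decreasing for arguments up to $(d-1)/2$ with $\tau(d,k)_{\max}-\tau(d,k+1)_{\max}=d-2k-2$, I would check that equality is possible only when $r'=r+1$ and $d-2r-2=1$. That again gives $d=2r+3$.

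The main obstacle is the precise form of the criterion. I need to confirm that it really determines both the set $\{\nu\le1\}$ and, in the non-square case, which of free or nearly free occurs, and that in the square case a nearly free arrangement with that polynomial is consistent. The final numerical check is the step most likely to need adjustment: a drop of $\tau_{\max}$ by exactly $1$ across $r<r'$ should force consecutive values with $d-2r-2=1$.
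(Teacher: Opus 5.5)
Your proof is correct, but it takes a different route from the paper's. You use the full Abe--Dimca criterion, including the refinement that a non-square $\chi(\A;t)$ decides between free and nearly free. You combine this with Terao's factorization, which splits each case into a square and a non-square subcase.

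The paper uses only the first half of the criterion, namely that $\nu\le1$ is a lattice property. It combines this with a purely numerical inequality, obtained from \eqref{eq:nu} and the monotonicity of $k\mapsto\tau(d,k)_{\max}$:
\[
\nu(\A)-\nu(\A')\ \ge\ \tau(d,r)_{\max}-\tau(d,r+1)_{\max}=d-2r-2\ \ge\ 1 .
\]
This disposes of the nearly free case at once, since it forces $\nu(\A)\ge2$. In the free case it gives $1\ge\nu(\A)\ge d-2r-2\ge1$, hence $d=2r+3$ and $\tau(\A')=\tau(d,r+1)_{\max}$, so $r'=r+1$. Terao's factorization enters only at the very end, to get $\chi(\A;t)=(t-r')^2$.

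The paper's route is shorter and does not need the non-square refinement of the criterion. The same inequality also shows that $\A$ is never free in a counterexample. So the subcase $\nu(\A)=0$ in your nearly free argument is vacuous from the start, and no square/non-square discussion is needed there.

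Your route instead gets $r'=(d-1)/2$ directly from squareness. After that, the final check you flagged as uncertain is immediate. For $r'=(d-1)/2$ one has $\tau(d,r)_{\max}-\tau(d,r')_{\max}=(r'-r)^2$, so $\tau(d,r)_{\max}-1=\tau(d,r')_{\max}$ forces $r'=r+1$, and hence $d=2r+3$.
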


\begin{proof}
Since $d\ge2m+3$ by Proposition \ref{P:basic}, no point has multiplicity at
least $d/2$, so the second hypothesis of \cite[Theorem 1.5]{AbeDimcaSplit} is
automatic and the criterion applies to $\A$ and to $\A'$ alike. As
$\chi(\A;t)=\chi(\A';t)$, we conclude that $\nu(\A)\le1$ if and only if
$\nu(\A')\le1$.

On the other hand, by \eqref{eq:nu} we have
$\nu(\A)=\tau(d,r)_{\max}-\tau(\A)$, and also
$\nu(\A')\le\tau(d,r+1)_{\max}-\tau(\A)$: if $r'<(d-2)/2$ this follows from
$r+1\le r'\le(d-1)/2$ and the monotonicity of $k\mapsto\tau(d,k)_{\max}$, and
otherwise from $\tau(d,r+1)_{\max}\ge\ceil*{3(d-1)^2/4}$. Hence
\begin{equation}\label{eq:nudiff}
\nu(\A)-\nu(\A')\ \ge\ \tau(d,r)_{\max}-\tau(d,r+1)_{\max}=d-2r-2\ \ge\ 1 .
\end{equation}
If $\A'$ were nearly free, then $\nu(\A)\ge2$ by \eqref{eq:nudiff},
contradicting $\nu(\A)\le1$. If $\A'$ is free, then $1\ge\nu(\A)\ge d-2r-2\ge1$,
so $\A$ is nearly free and $d=2r+3$. Moreover, equality holds in
\eqref{eq:nudiff}, so $\tau(\A')=\tau(d,r+1)_{\max}$. As $\A'$ is free,
$\tau(\A')=\tau(d,r')_{\max}$ with $r'\le(d-1)/2$, and strict monotonicity
gives $r'=r+1$. Then Terao's Factorization Theorem \cite[p. 145]{OrlikTerao},
applied to $\A'$, gives
$\chi(\A;t)=\chi(\A';t)=(t-r')\bigl(t-(d-1-r')\bigr)=(t-r')^2$, as
$d-1-r'=r+1=r'$.
\end{proof}

The hypothesis of Proposition \ref{P:charpoly} is a serious restriction on the
lattice, and not one that the numerical type can supply.

\begin{proposition}\label{P:nofour}
For every line arrangement $\A$ of $d$ lines one has
\begin{equation}\label{eq:sumrL}
\sum_{L\in\A}r_L\ \ge\ 5\binom d2-3\tau(\A),
\end{equation}
with equality if and only if $m(\A)\le3$. Hence, if
$\tau(\A)<\tfrac56 d(d-3)$, the average of $r_L$ over the lines of $\A$ exceeds
$5$, so the existence of a line carrying at most four intersection points
cannot be deduced from the numerical type by counting incidences. Moreover, if
$m(\A)=4$, then every
line of $\A$ carries at least $\lceil(d-1)/3\rceil$ intersection points.
\end{proposition}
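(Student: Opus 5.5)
The proof has three parts, matching the three assertions of the statement.

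\emph{The inequality and its equality case.} We use \eqref{eq:sumr}, which gives
\[
\sum_{L\in\A}r_L=5\binom d2-3\tau(\A)+\sum_{k\ge4}\binom{k-2}{2}n_k .
\]
Every term $\binom{k-2}{2}n_k$ with $k\ge4$ is non-negative. So \eqref{eq:sumrL} holds, and equality holds exactly when $n_k=0$ for all $k\ge4$, that is, when $m(\A)\le3$. For completeness we recheck the identity behind \eqref{eq:sumr}, namely $k=2\binom k2-3\binom{k-1}2+\binom{k-2}2$. Summing it against $n_k$ and using $\sum\binom k2n_k=\binom d2$ together with $\sum\binom{k-1}2n_k=\tau-\binom d2$ from \eqref{eq:tauT} gives $5\binom d2-3\tau+\sum\binom{k-2}2n_k$.

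\emph{The average of $r_L$.} Divide \eqref{eq:sumrL} by $d$. The average exceeds $5$ as soon as $5\binom d2-3\tau>5d$. Since $5\binom d2-5d=\frac{5d(d-3)}2$, this is exactly $\tau<\frac56d(d-3)$. In that regime some line has $r_L\ge6$, and a pure incidence count cannot force a line with $r_L\le4$. We phrase this as a remark, since it concerns what the numerical type can imply rather than a strict implication. The natural reading is that the average being above $5$ is consistent with every line having $r_L\ge5$, so counting alone gives nothing.

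\emph{The case $m(\A)=4$.} Fix a line $L$. By \eqref{eq:delete},
\[
n_2^L+2n_3^L+3n_4^L=d-1 .
\]
This forces $3r_L=3(n_2^L+n_3^L+n_4^L)\ge d-1$, so $r_L\ge\lceil(d-1)/3\rceil$. This bound is fine and uses only $m\le4$.

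The only delicate point is the interpretive middle claim, where we must avoid overstating it. The other two parts are immediate from \eqref{eq:sumr} and \eqref{eq:delete}.
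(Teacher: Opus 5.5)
Your proposal is correct and follows the paper's own proof: the inequality and its equality case come from the non-negative correction term $\sum_{k\ge4}\binom{k-2}{2}n_k$ in \eqref{eq:sumr}, the threshold $\tfrac56 d(d-3)$ comes from comparing $5\binom d2-3\tau(\A)$ with $5d$, and the last claim comes from $\sum_k(k-1)n_k^L=d-1$ together with $k-1\le3$. Your re-verification of the identity $k=2\binom k2-3\binom{k-1}2+\binom{k-2}2$, and your cautious reading of the middle claim as a statement about what counting can force, are both fine.
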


\begin{proof}
The inequality and the equality case follow from \eqref{eq:sumr}. The right
hand side of \eqref{eq:sumrL} exceeds $5d$ precisely when
$\tau(\A)<\tfrac53\bigl(\binom d2-d\bigr)=\tfrac56 d(d-3)$. The last claim
follows from $\sum_k(k-1)n_k^L=d-1$ and $k-1\le3$.
\end{proof}

For $d=14,15,16$ the bound $\tfrac56 d(d-3)$ equals ${\sim 128.3}$, $150$ and
${\sim 173.3}$ respectively, while $\tau(\A)\le126$, $148$ and $169$
respectively by Section \ref{sub:del1416} and Tables \ref{tab:1516} and
\ref{tab:1516b}. Hence the hypothesis of Proposition \ref{P:charpoly} is never
forced by this count in our range. It is moreover unavailable when $m=4$, since
$\lceil (d-1)/3\rceil=5$ for these three degrees. More generally, since every
line satisfies $r_L\ge\lceil (d-1)/(m-1)\rceil$, that hypothesis can hold only
if $m\ge(d+3)/4$, that is only if $m\ge5$ when $d\in\{14,15,16\}$. In fact it
never holds in degree $14$, nor for $(d,r)=(15,5)$. Indeed, for the three types
of Proposition \ref{P:fifteen}(i) every line contains exactly six singular
points. For the types of Proposition \ref{P:fourteen}(b), \eqref{eq:sumr1416}
gives $\sum_Lr_L\ge82$ while $r_L\le6$, so a line with at most four
intersection points can only occur for the type $(0,23,2,1)$, where
$\sum_Lr_L=82$, but on such a line $\sum_k(k-1)n_k^L\le4+3+3+2=12<13$. It can
therefore be tested on individual lattices only in the rows of
Table \ref{tab:1516} with $r=6$, and in the rows of Table \ref{tab:1516b} with
$m=5$.

Subject to that hypothesis, Proposition \ref{P:charpoly} \textbf{removes} every
case of Tables \ref{tab:1516} and \ref{tab:1516b} with $r=6$ in which $\A'$ is
free or nearly free, with a single exception: $d=15$, $r=6$, $\tau(\A)=147$, where
$\A$ is nearly free with exponents $(6,9)$, and indeed
$d=2r+3$, $r'=7=r+1$ and $\chi(\A;t)=(t-7)^2$ is a perfect square. Among
lattices having a line with at most four intersection points, that case is
thus the only one with $\A'$ free or nearly free which resists both
Proposition \ref{P:terao} and Proposition \ref{P:charpoly}. Without this
hypothesis, which by Proposition \ref{P:nofour} always fails when $m=4$, every
case with $\A'$ free and $d\in\{15,16\}$, and every case with $\A'$ nearly free,
is left open by both propositions.

\section{Ziegler-type constructions and realization spaces}
\label{sec:obstruction}

\subsection{Ziegler-type constructions}
The classical mechanism producing lattice-isomorphic arrangements with distinct
$r$ is the following (cf. \cite{Di}). Choose $k$ points of $\PP^2$ and a graph $G$ on them, and
let the arrangement consist of the lines spanned by the edges of $G$. A vertex
of degree $e$ becomes a point of multiplicity $e$, and for a general position
of the points all other intersections are nodes, so the lattice depends only on
$G$. Specializing the points preserves the lattice but may create an extra
Jacobian relation.

\begin{example}\label{ex:ziegler}
Let $G=K_{3,3}$ on six points, so $d=9$ and every vertex has degree three, with
$n_2=18$, $n_3=6$ and $\tau=42$ for every admissible position. Then
\[
r(\A)=6\ \text{ for general points},
\qquad
r(\A)=5\ \text{ when the six points lie on a smooth conic}.
\]
This is Ziegler's example \cite{Zie}, and it shows that $r$ is not a
lattice invariant. It does not contradict Conjecture \ref{C1}: here
$(d-2)/2=7/2$ and both values of $r$ exceed it, so the second branch of
\eqref{eq:nu} applies to both members and gives $\nu=48-42=6$ in either case.
\end{example}

\begin{proof}[Proof of Theorem \ref{thmD}]
Let $V$ be the set of points of multiplicity at least three, and for $v\in V$
let $e_v$ be its multiplicity, so that $3\le e_v\le m$. The lines of $\A$
through two points of $V$ are the edges of a simple graph $G$ on $V$. Let $F$
be their number and $H$ the number of lines through exactly one point of $V$,
so that $F+H\le d$ and $\sum_ve_v=2F+H$. Hence
\[
\sum_ve_v-F_{\max}(e)\ \le\ \sum_ve_v-F\ =\ F+H\ \le\ d,
\]
where $F_{\max}(e)$ is the largest number of edges of a simple graph on $V$ in
which every vertex $v$ has degree at most $e_v$. By \eqref{eq:tauT},
\[
\tau(\A)=\binom d2+\sum_v\binom{e_v-1}{2}.
\]
Maximizing over all sequences with $3\le e_v\le m$ and
$\sum_ve_v-F_{\max}(e)\le d$ gives $121$, $141$ and $160$ for $d=14,15,16$,
attained for instance at $(5^4,4^2)$, $(5^6)$ and $(6,5^5)$. It gives $112$,
attained at $(4^7)$, for $d=14$ and $m\le4$, and $156$, attained at $(5^6)$, for
$d=16$ and $m\le5$ -- see \cite{Repo}. The values of $\tau(\A)$ in
Proposition \ref{P:fourteen} and in Table \ref{tab:1516} are at least $125$,
$143$ and $166$ for $d=14,15,16$, so none of them is attained, and the same
holds for the values in Table \ref{tab:1516b} above the stated bounds.
\end{proof}

\begin{remark}
The bound $160$ is sharp: take six general points, the fifteen lines joining
them, and one further general line through one of the six points. If every
line passes through \emph{exactly} two points of $V$, then $(e_v)$ is
graphical with $\sum_ve_v=2d$, and the maxima become $158$ for $d=16$ and
$154$ for $d=16$, $m\le5$, the other values being unchanged.
\end{remark}

\begin{remark}
Theorem \ref{thmD} says that a counterexample in degrees $14$--$16$ with $r=m$,
or with $d=14$, or with $r>m$ and $\tau(\A)>141$, $156$ for $d=15,16$
respectively, must have a line passing through three or more points of
multiplicity at least three. In particular, it cannot arise from the
construction above.
\end{remark}

\subsection{Expected dimension of realization spaces}
\label{sub:dim}

Theorem \ref{thmD} concerns one specific construction. A cruder measure of
whether a lattice can move in a family is the expected dimension of its
realization space. The following lemma is well known to the experts, but we recall it for the completeness of our presentation.

\begin{lemma}\label{lem:edim}
Modulo $\mathrm{PGL}(3,\C)$, the realization space of $\LL(\A)$ in the
space of $d$-tuples of lines has dimension at least $e(\A)$ at each of its
points, where
\begin{equation}\label{eq:edim}
e(\A):=2d-8-\sum_{p\,:\,m_p\ge3}\bigl(m_p-2\bigr).
\end{equation}
\end{lemma}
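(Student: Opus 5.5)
The count is a standard parameter-minus-conditions estimate. I will describe the realization space inside a product of projective planes and bound its codimension by the number of incidence conditions.

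\textbf{Ambient space.} A $d$-tuple of lines is a point of $(\PP^{2\vee})^d$, which has dimension $2d$. The realizations of $\LL(\A)$ form a locally closed subset $R$: the incidence conditions are closed, and the open conditions say that lines stay distinct and no extra concurrences appear. I will bound $\dim_x R$ from below at each point $x$ and then subtract $\dim\mathrm{PGL}(3,\C)=8$.

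\textbf{Local equations.} For each point $p$ with $m_p\ge3$ and lines $L_{i_1},\dots,L_{i_{m_p}}$ through it, I impose that every $3\times3$ minor of the coefficient matrix of these lines vanishes. Near a realization $x$, the lines $L_{i_1},L_{i_2}$ meet transversally at $p$, so the condition is locally equivalent to the $m_p-2$ equations
\[
\det(L_{i_1},L_{i_2},L_{i_j})=0,\qquad 3\le j\le m_p.
\]
Nodes impose nothing. Hence, in a Zariski neighbourhood of $x$, the closure of $R$ coincides with a closed subset $Z$ defined by $\sum_{m_p\ge3}(m_p-2)$ equations. The open conditions defining $R$ hold at $x$, so they hold on a neighbourhood of $x$ in $Z$. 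By Krull's principal ideal theorem, every component of $Z$ through $x$ has dimension at least $2d-\sum(m_p-2)$, and therefore so does $R$ locally at $x$.

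\textbf{Quotient.} Since $\A$ is essential, its lines include four in general position, so the stabilizer in $\mathrm{PGL}(3,\C)$ of a realization is finite. The action is free up to finite groups, orbits are $8$-dimensional, and the quotient has local dimension at least $2d-\sum(m_p-2)-8=e(\A)$.

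\textbf{Main obstacle.} The delicate step is the justification of local codimension. I must check that the Zariski neighbourhood really lets me replace all minors by the $m_p-2$ chosen determinants, which is true because $L_{i_1}\cap L_{i_2}$ stays a point. I must also confirm that the fibre dimension of the orbit map is exactly $8$, which follows from the finite stabilizer.

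Note that the bound only says something when $e(\A)\ge0$. If $R$ is empty, the statement is vacuous.
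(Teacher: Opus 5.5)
This is essentially the paper's argument: the same ambient space $(\PP^{2\vee})^d$ of dimension $2d$, the same choice of two lines through each point with $m_p\ge3$ producing $m_p-2$ local equations, the same Krull-type bound $2d-\sum(m_p-2)$ on every component through the given point, and then subtraction of $\dim\mathrm{PGL}(3,\C)=8$. One correction: your claim that an essential arrangement always contains four lines in general position, so that stabilizers are finite and orbits are exactly $8$-dimensional, is false for near pencils ($d-1$ concurrent lines plus one further line, which have a positive-dimensional stabilizer). That claim is not needed, however: the lower bound only uses that orbits have dimension at most $8$, which is automatic and is exactly what the paper invokes.
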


\begin{proof}
Let $X\subset(\PP^{2\vee})^d$ be the set of $d$-tuples of lines realizing
$\LL(\A)$. The ambient space is smooth of dimension $2d$. Fix a point
$(\ell_1,\dots,\ell_d)$ of $X$. For each point $p$ of multiplicity $m_p\ge3$
choose two of the lines through $p$. In a neighborhood of the fixed point
these two lines remain distinct, their intersection point $p(\ell)$ depends
regularly on the lines, and the condition that each of the remaining $m_p-2$
lines through $p$ passes through $p(\ell)$ is a single equation. Since the
absence of further incidences and the distinctness of the lines are open
conditions, $X$ is defined near the fixed point by these
$N=\sum_p(m_p-2)$ equations, so every irreducible component of $X$ through
that point has dimension at least $2d-N$. The orbits of
$\mathrm{PGL}(3,\C)$ have dimension at most $8$, and this leads to \eqref{eq:edim}.
\end{proof}

The two sums in \eqref{eq:tauT} and \eqref{eq:edim} pull in opposite
directions: a point of multiplicity $m$ contributes $\binom{m-1}{2}$ to
$\tau(\A)$ but $m-2$ to the codimension, and the ratio
$\binom{m-1}{2}/(m-2)=(m-1)/2$ increases with $m$. Reaching the large values of
$\tau(\A)$ required in Section \ref{sec:1516} forces the multiplicity to be
concentrated, and concentrating it destroys the dimension. A genuine necessary
condition, valid for every arrangement, limits how far this concentration can
go.

\begin{proposition}\label{P:incid}
Let $k$ be the number of points of $\A$ of multiplicity at least three and put
$S=\sum_{m_p\ge3}m_p$. Then
\begin{equation}\label{eq:incid}
S\,(S-d)\ \le\ d\,k\,(k-1),
\end{equation}
and equality forces every line of $\A$ to carry exactly $S/d$ points of
multiplicity at least three.
\end{proposition}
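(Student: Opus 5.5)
Let $V$ be the set of the $k$ points of multiplicity at least three. For each line $L\in\A$ put $s_L=|L\cap V|$, the number of such points on $L$. Counting incidences between lines and points of $V$ gives
\[
\sum_{L\in\A}s_L=\sum_{p\in V}m_p=S .
\]
The plan is a double count of pairs of points of $V$ lying on a common line of $\A$, followed by Cauchy--Schwarz.

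First, two distinct points of $V$ lie on at most one common line of $\A$. The set of unordered pairs $\{p,q\}\subset V$ with $p,q$ on a common line of $\A$ therefore has size
\[
\sum_{L\in\A}\binom{s_L}{2}\ \le\ \binom k2 .
\]
Doubling, we get $\sum_L s_L^2-\sum_L s_L\le k(k-1)$, that is,
\[
\sum_L s_L^2\le k(k-1)+S .
\]

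Next, Cauchy--Schwarz over the $d$ lines gives
\[
S^2=\Bigl(\sum_L s_L\Bigr)^2\le d\sum_L s_L^2\le d\,k(k-1)+dS .
\]
Rearranging yields $S(S-d)\le d\,k(k-1)$, which is \eqref{eq:incid}.

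For the equality statement, equality in \eqref{eq:incid} forces equality in both inequalities used. In particular it forces equality in Cauchy--Schwarz, so all $s_L$ are equal, necessarily to $S/d$. As a by-product, it also forces every pair of points of $V$ to be joined by a line of $\A$.

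There is no real obstacle here. The only point to check is that the pair count is valid, namely that two points determine at most one line. This holds because the lines of $\A$ are pairwise distinct.
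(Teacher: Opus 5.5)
Your proof is correct and is essentially the paper's argument: both bound the pair count by $\sum_L\binom{s_L}{2}\le\binom k2$ and then apply convexity. Your Cauchy--Schwarz step $S^2\le d\sum_L s_L^2$ is exactly the paper's Jensen inequality $\sum_L\binom{s_L}{2}\ge d\binom{S/d}{2}$ rewritten, and the equality case follows the same way.
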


\begin{proof}
For a line $L$ let $s_L$ be the number of points of multiplicity at least three
on $L$, so that $\sum_Ls_L=S$. Two such points span a unique line, whence
$\sum_L\binom{s_L}{2}\le\binom k2$. As $x\mapsto\binom x2$ is convex, Jensen's
inequality gives $\sum_L\binom{s_L}{2}\ge d\binom{S/d}{2}$, that is
$S(S-d)/(2d)\le k(k-1)/2$. Equality in Jensen's inequality forces all $s_L$ to
coincide.
\end{proof}

We apply these two conditions to the numerical types left by
Section \ref{sec:1516}. Here a \emph{numerical type} is a vector
$(n_2,\dots,n_m)$ with the prescribed $d$ and $\tau(\A)$, whose maximal
multiplicity $m$ is one of the values allowed in the corresponding row, and
which satisfies \eqref{eq:hirz}, the conditions of Lemma \ref{lem:twoline}, and
the bound $\sum_Lr_L\le d\rho$ obtained from \eqref{eq:sumr1416} and the bound
$r_L\le\rho$ of Section \ref{sub:del1416}: $\rho=6$ for $d=14$ and for
$(d,r)=(15,5)$, $\rho=154-\tau(\A)$ for $(d,r)=(15,6)$, $\rho=7$ for
$(d,r,m)=(16,6,4)$, and no bound for $(d,r)=(16,6)$ with $m\in\{5,6\}$. The
column ``types'' of Table \ref{tab:dim} counts these -- the rows of the excluded
cases $(14,5,127)$, $(15,5,147)$ and $(16,5,171)$ are omitted.

\begingroup
\small
\begin{longtable}{cccccc}
\caption{Numerical types compatible with a positive expected dimension.}
\label{tab:dim}\\
\toprule
$d$ & case & $\tau(\A)$ & types & with $e(\A)\ge1$ & and satisfying \eqref{eq:incid}\\
\midrule
\endfirsthead
\toprule
$d$ & case & $\tau(\A)$ & types & with $e(\A)\ge1$ & and satisfying \eqref{eq:incid}\\
\midrule
\endhead
\bottomrule
\endlastfoot
14 & $r=m=5$ & $125$--$126$ & 6 & 0 & 0\\
14 & $r=5$, $m=4$ & $125$--$126$ & 4 & 0 & 0\\
\midrule
14 & total & & 10 & 0 & 0\\
\midrule
15 & $r=m=5$ & $148$ & 3 & 0 & 0\\
15 & $r=m=6$ & $143$--$147$ & 281 & 65 & 56\\
15 & $r=6$, $m\in\{4,5\}$ & $112$--$147$ & 808 & 327 & 325\\
\midrule
15 & total & & 1092 & 392 & 381\\
\midrule
16 & $r=m=6$ & $166$--$169$ & 379 & 38 & 22\\
16 & $r=6$, $m=4$ & $164$--$169$ & 33 & 0 & 0\\
16 & $r=6$, $m=5$ & $135$--$169$ & 964 & 325 & 324\\
\midrule
16 & total & & 1376 & 363 & 346\\
\end{longtable}
\endgroup

All ten types in degree $14$, the three types with $(d,r)=(15,5)$ and the
$33$ types with $(d,r,m)=(16,6,4)$ have $e(\A)\le0$, so the dimension count
gives no information about them. The surviving types with $r=m$ have
$6\le k\le12$, $29\le S\le45$ and $1\le e(\A)\le5$. Those with $r>m$ have
$2\le k\le20$, $9\le S\le63$ and $1\le e(\A)\le17$. The inequality
\eqref{eq:incid} is an honest obstruction, valid for every arrangement, but on
its own it is weak: applied to all types of Table \ref{tab:dim}, it removes
$9$ types in the row $(15,r=m=6)$, $3$ in the row $(15,r=6)$, $18$ in the row
$(16,r=m=6)$, $4$ in the row $(16,r=6,m=5)$, and none in the other rows. Its
equality case, however, is rigid enough to analyse by hand.

\begin{example}\label{ex:nearmiss}
Among the surviving types, exactly five satisfy \eqref{eq:incid} with
equality, all in degree $15$ and all with $k=6$ and $S=30=2d$: the types
$(n_2,\dots,n_6)=(43,0,2,2,2)$, $(42,0,3,0,3)$, $(42,1,0,3,2)$ and
$(41,1,1,1,3)$, with $r=m=6$ and $\tau(\A)=143$, $144$, $144$, $145$, and the
type $(n_2,\dots,n_5)=(45,0,0,6)$ with $m=5$ and $\tau(\A)=141$. By
Proposition \ref{P:incid} every line then carries exactly two of the six
multiple points, so these points are the vertices of a graph with fifteen
edges on six vertices, necessarily $K_6$, and every multiple point has
multiplicity $5$. This excludes the first four types (which are also excluded
by Theorem \ref{thmD}, as $\tau(\A)>141$). The last type is realized precisely
by the fifteen lines joining six points no three of which are collinear, in
positions where no further concurrences occur. For six general points one
finds $\AR(f)_8=0$, so that $r=9$ by \eqref{eq:dpwp}, since
$\tau(15,9)'_{\max}=141$: the general member is maximal Tjurina of type
$(15,9)$ and balanced. The same value $r=9$ is obtained when the six points
lie on a smooth conic, so this natural specialization produces no jump. A
counterexample of this type would require a special position of the six points
for which $r$ drops to $6$.
\end{example}

\begin{remark}\label{rk:edim}
The condition $e(\A)\ge1$ is sufficient for the realization space to be
positive-dimensional, provided it is non-empty, but not necessary: the
concurrency conditions may be dependent, as happens for symmetric
configurations, and then the actual dimension exceeds $e(\A)$.
Table \ref{tab:dim} therefore does not exclude a counterexample -- it only
identifies the $381$ and $346$ numerical types in degrees $15$ and $16$ whose
non-empty realization spaces are automatically positive-dimensional. It should
also be said that a jump of $r$ does not require a positive-dimensional family
at all: what is needed is a lattice whose realization space has two components
carrying different values of $r$, and Section \ref{sec:extremal} exhibits
lattices in degree $13$ whose realizations are isolated and defined over
quadratic fields. A rigid lattice with two such components would escape this
subsection entirely.
\end{remark}

\begin{remark}\label{rk:chmn}
Conjecture \ref{C1} is equivalent to a positive answer to
\cite[Question 7.12]{Cook+}, which asks whether the splitting type $(a,b)$ of
the derivation bundle is a combinatorial invariant. Indeed, by
\cite[Corollary 3.3]{AbeDimcaSplit} one has $\nu(\A)=(d-1)^2-\tau(\A)-ab$, which
is \eqref{eq:nu} rewritten, and $(d-1)^2-\tau(\A)$ is the second Chern class of
the derivation bundle, see also \cite[(7.1)]{Cook+}, hence a lattice
invariant. Since $a+b=d-1$, knowing $ab$ is the same as knowing $(a,b)$. This
is implicit in \cite[\S1]{AbeDimcaSplit}, where a positive answer to
\cite[Question 7.12]{Cook+} is described as a stronger form of Terao's
Conjecture.

The splitting type also governs unexpected curves. Let $Z$ be the set of $d$
points dual to $\A$. By \cite[Lemma 3.5(a) and Corollary 5.7]{Cook+}, $Z$
admits an unexpected curve if and only if $2a+2<d$, that is $b-a\ge2$, and no
$a+2$ points of $Z$ are collinear, that is $m(\A)\le a+1$. In this case $Z$ has
an unexpected curve of degree $j$ exactly for $a<j\le b-1$. For a
counterexample $\A,\A'$ one has $a=r\ge m$, so the dual configuration $Z$ of
$\A$ admits unexpected curves of degrees $r+1,\dots,d-r-2$, while the dual
configuration $Z'$ of $\A'$ has none of degree $r+1$, since $a'\ge r+1$. If
moreover $\A'$ is balanced, which is automatic when $d=2r+3$, for instance in
the case $d=13$, $r=5$ of Section \ref{sec:thirteen}, then $Z'$ admits no
unexpected curve at all. Thus a counterexample amounts to a pair of
lattice-isomorphic point configurations of which only one admits an unexpected
curve of degree $r+1$. Conversely, Conjecture \ref{C1} would imply that the
existence and the degrees of unexpected curves of $Z$ depend only on
$\LL(\A)$.
\end{remark}

\section{An extremal type in degree fifteen}
\label{sec:2114}

We illustrate how the types of Table \ref{tab:1516b} can be attacked directly.
Take $d=15$, $m=4$, $r=6$ and $\tau(\A)=147$. By \eqref{eq:tauT} one has
$n_3+3n_4=42$, and the extremal solution without triple points is
\[
(n_2,n_3,n_4)=(21,0,14).
\]
Every line then satisfies $n_2^L+3n_4^L=14$, and summing gives
$\sum_Ln_4^L=4n_4=56$, so that $\sum_L(4-n_4^L)=4$. Note that
Lemma \ref{lem:twoline} is vacuous here, since a line carries at most four of
the fourteen points of multiplicity four.

\begin{proposition}\label{P:2114}
There is no counterexample with $d=15$, $m=4$ and
$(n_2,n_3,n_4)=(21,0,14)$.
\end{proposition}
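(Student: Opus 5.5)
The plan is to combine the line bound of Proposition \ref{P:fifteen}(ii) with the local Bézout relation on each line, and to obtain a contradiction purely by counting incidences. No lattice enumeration should be needed.

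First I fix the invariants. Since $m=4$, Proposition \ref{P:basic} forces $r=\floor*{(15-3)/2}=6$, so we are in case (ii) of Proposition \ref{P:fifteen}. The type $(21,0,14)$ has $\tau(\A)=\binom{15}{2}+3\cdot14=147$ by \eqref{eq:tauT}. Proposition \ref{P:fifteen}(ii) then gives, for every line $L\in\A$,
\[
r_L\le154-\tau(\A)=7.
\]
This is exactly the statement there that for $\tau(\A)=147$ every line carries at most seven singular points. Behind it is Lemma \ref{lem:deltau}, which uses Terao's Conjecture in degree $14$ via \cite{BK}, together with the deletion lemma and Proposition \ref{P:fourteen}. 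So the bound is unconditional.

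Next I use the line identity. Since $n_3=0$, the second identity in \eqref{eq:delete} reads $n_2^L+3n_4^L=14$, hence
\[
r_L=n_2^L+n_4^L=14-2n_4^L.
\]
The bound $r_L\le7$ then forces $n_4^L\ge 7/2$, that is $n_4^L\ge4$. On the other hand $3n_4^L\le14$ gives $n_4^L\le4$. Therefore every line of $\A$ passes through exactly four points of multiplicity four, and has $n_2^L=2$ and $r_L=6$.

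Summing over the fifteen lines gives
\[
\sum_L n_4^L=60,
\]
while double counting incidences gives $\sum_L n_4^L=4n_4=56$. This is a contradiction. Equivalently, $\sum_L r_L=90$ would have to equal $2n_2+4n_4=98$ from \eqref{eq:sumr1416}.

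The only delicate point is making sure Proposition \ref{P:fifteen}(ii) is quoted in its unconditional form, which it is. The remaining steps are arithmetic.
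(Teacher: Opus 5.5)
Your proposal is correct and is essentially the paper's own proof: the bound $r_L\le 154-\tau(\A)=7$ from Proposition \ref{P:fifteen}(ii), the identity $r_L=14-2n_4^L$ forcing $n_4^L=4$ on every line, and the resulting count $60\ne 4n_4=56$. Your explicit checks that $r=6$ (via Proposition \ref{P:basic}) and $\tau(\A)=147$ (via \eqref{eq:tauT}) make precise what the paper leaves implicit from the setup of its Section \ref{sec:2114}.
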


\begin{proof}
By Proposition \ref{P:fifteen}(ii) with $\tau(\A)=147$, every line satisfies
$r_L\le7$. As there are no triple points, $r_L=n_2^L+n_4^L=14-2n_4^L$, so
$n_4^L\ge4$, and $3n_4^L\le14$ gives $n_4^L=4$ for every line. Summing over
the fifteen lines gives $4n_4=60$, contradicting $n_4=14$.
\end{proof}

\begin{remark}\label{rk:baer}
The method of Section \ref{sec:104} can also be applied to this type. In the
extremal case in which a single line $L_0$ absorbs the whole defect, that is
$n_4^{L_0}=0$ and $n_4^L=4$ for the other fourteen lines (a case excluded at
once by Proposition \ref{P:2114}, since $r_{L_0}=14$), it proceeds as follows.
Dually, $L_0$ becomes a point on no block, while the remaining
fourteen points carry fourteen blocks of size four, each point lying on
exactly four of them, so that the uncovered pairs form a perfect matching. The
search produces $1920$ labelled solutions forming a single isomorphism class,
namely
\[
\begin{aligned}
&\{0,2,4,6\},\ \{0,3,8,10\},\ \{0,5,9,12\},\ \{0,7,11,13\},\ \{1,2,9,11\},\\
&\{1,3,5,7\},\ \{1,4,8,13\},\ \{1,6,10,12\},\ \{2,5,10,13\},\ \{2,7,8,12\},\\
&\{3,4,11,12\},\ \{3,6,9,13\},\ \{4,7,9,10\},\ \{5,6,8,11\}.
\end{aligned}
\]
This configuration is the complement of a Baer subplane $\PP^2(\F_2)$ in
$\PP^2(\F_4)$: the fourteen lines of $\PP^2(\F_4)$ meeting the subplane in
exactly one point carry four points of the complement each, and the seven
lines of the subplane give the perfect matching of uncovered pairs. Its
automorphism group has order $336$, in accordance with
$1920=2^7\cdot7!/336$. In particular, it is realizable over $\F_4$, and our
realizability test in characteristic zero, which leaves one parameter $s$ and an
ideal containing $s^2-s+1$ and $s^2-s-1$, returns the unit ideal, while in
characteristic two it returns $(s^2+s+1)$. Non-realizability over $\C$ can
also be seen very directly: the restriction to the points
$\{0,2,4,7,8,10,13\}$ is the Fano plane, with lines
$\{0,2,4\}$, $\{0,8,10\}$, $\{0,7,13\}$, $\{4,8,13\}$, $\{2,10,13\}$,
$\{2,7,8\}$, $\{4,7,10\}$. This is a small reminder that a matroid search must always
be completed by a realizability test in characteristic zero, and that the test
must verify the recovered lattice.
\end{remark}

\section*{AI Declaration}
Symbolic computations performed in this paper were designed and optimized with
the use of \texttt{Claude} (Anthropic). All the mathematical results were obtained using human intelligence by both authors. Piotr
Pokora acknowledges \texttt{Anthropic} for providing access to the Claude Team
plan for scientists.

\section*{Funding}
Alexandru Dimca is partially supported by the project ``Singularities and Applications'' -- CF 132/31.07.2023, funded by the European Union -- NextGenerationEU -- through Romania's National Recovery and Resilience Plan.

Piotr Pokora is supported by the National Science Centre (Poland) Sonata Bis Grant  \textbf{2023/\allowbreak 50/\allowbreak E/\allowbreak ST1/\allowbreak 00025.} For the purpose of Open Access, the authors have applied a CC-BY public copyright license to any Author Accepted Manuscript (AAM) version arising from this submission. 

\section*{Data Availability Statement}
The scripts supporting the computations in this paper are available in the repository \cite{Repo}.

\section*{Conflict of Interest}
The authors declare that they have no conflicts of interest related to this manuscript.

\end{document}